\documentclass[12pt]{amsart}

\usepackage{amsmath,amsthm,amsfonts,amssymb,bm,graphicx }

\usepackage{graphicx}
\usepackage{enumitem}
\usepackage{xparse}
\usepackage{tikz}
\usepackage{comment}
\usepackage{bm}
\usepackage{amsaddr}
\usepackage{orcidlink}

\pagestyle{plain}
\usepackage
{hyperref}
\hypersetup{colorlinks=true,citecolor=blue,linkcolor=blue,urlcolor=blue}

\textwidth=6in \textheight=8.5in \topmargin 0cm \oddsidemargin 0.3cm
\evensidemargin 0.3cm

\makeatletter
\g@addto@macro\bfseries{\boldmath}
\makeatother

\theoremstyle{plain}
\newtheorem{theorem}{Theorem}
\newtheorem*{theorem*}{Theorem}
\newtheorem{lemma}[theorem]{Lemma}

\newtheorem{prop}[theorem]{Proposition}

\theoremstyle{remark}

\newtheorem{example}{Example}

\numberwithin{theorem}{section}

\numberwithin{equation}{section}

\def\N{\mathbb N}
\def\Z{\mathbb Z}
\def\R{\mathbb R}
\def\Q{\mathbb Q}
\def\C{\mathbb C}

\def\O{\mathcal O}

\def\bt{\blacktriangle}

\begin{document}

\author{Simona Fry\v{s}ov\'a$^1$ \orcidlink{0009-0000-8060-9307}, Magdal\'ena Tinkov\'a$^2$ \orcidlink{0000-0003-0874-9705}}

\title[Indecomposable quadratic forms over biquadratic and simplest cubic fields]{Additively indecomposable quadratic forms over biquadratic and simplest cubic fields}

\address{$^1$Charles University, Faculty of Mathematics and Physics, Department of Algebra,
Sokolovsk\'{a} 83, 18600 Praha 8, Czech Republic}
\email{simonkahlavinkova@gmail.com}

\address{$^2$Faculty of Information Technology, Czech Technical University in Prague, Th\'akurova 9, 160 00 Praha 6, Czech Republic}
\email{tinkova.magdalena@gmail.com}

\subjclass[2020]{11E12, 11R16, 11R80}


\thanks{The authors were supported by Czech Science Foundation GA\v{C}R, grant 22-11563O. S. F. was further supported by Charles University programmes PRIMUS/24/SCI/010, GA UK 252931 and SVV-2025-260837 }

\begin{abstract} 
In this paper, we study additively indecomposable quadratic forms over real biquadratic and simplest cubic fields. In particular, we show that over these fields, we can always find such a classical form in 2 variables, which differs from the situation for forms over integers. Moreover, for some cases, we derive a lower bound on the number of classical, additively indecomposable binary quadratic forms up to equivalence.      
\end{abstract}

\setcounter{tocdepth}{2}  \maketitle 

\section{Introduction}

The study of universal quadratic forms definitely belongs among classical topics in number theory, and, moreover, we can highlight many revolutionary results coming up in the last few years. Over rationals, by a universal quadratic form, we mean a form with integer coefficients, which represents only positive integers (or zero if all entries are zero), and, furthermore, represents them all. A typical example is $x^2+y^2+z^2+w^2$, i.e., the sum of four squares, which was already considered by Lagrange.

In general, instead of $\Z$, we can consider the ring of algebraic integers $\O_K$ of any number field $K$. Then, the set of positive integers is replaced by the set of totally positive algebraic integers $\O_K^+$, i.e., of those elements of $\O_K$ with all conjugates positive. One of the most interesting directions of the study of such forms is their minimal number of variables -- which is, e.g., four for $\Z$. In this task, we can consider somewhat useful elements, so-called indecomposable integers, which are totally positive algebraic integers not representable as a sum of two elements of $\O_K^+$.

Besides algebraic integers, our quadratic form $Q$ can represent other quadratic forms. If this is true for all totally positive definite forms with $n$ variables, we say that $Q$ is $n$-universal. To study this property, we can use an analogy of indecomposable integers, i.e., additively indecomposable quadratic forms, which are totally positive definite quadratic forms that cannot be decomposed as a sum of two totally positive semi-definite quadratic forms. As we will see in this paper, indecomposable integers can also be used for the construction of such forms.

A more detailed study of additively indecomposable quadratic forms was initiated by Mordell \cite{Mo2}, who also provided one of the first key results. In particular, for a form $Q$ over $\Z$, he proved that if its determinant is large enough (more concretely, larger than the Hermite constant), then $Q$ is decomposable. A similar result for forms over number fields was first shown by Baeza and Icaza \cite{BI} and later refined by Tinkov\'a and Yatsyna \cite{TY}.

Another question we can ask about additively indecomposable quadratic forms is if, for a given $K$ and number of variables $n$, there exists such a form. And here, it is important to mention that for $2\leq n \leq 5$, there do not exist additively indecomposable (classical) quadratic forms over $\Z$ with this number of variables. Moreover, still for $K=\Q$, Erd\"{o}s and Ko \cite{EK1, EK2} studied in detail the existence of these forms for different choices of $n$ and values of the determinant of $Q$. In this spirit, in contrast to $\Z$, the authors of \cite{TY} showed that there always exists an additively indecomposable binary quadratic form over every real quadratic field.

The examination of the existence of these forms is also one of the main topics of this paper. We study it for two different families of totally real fields. The first of them is the family of biquadratic fields, for which we prove the following theorem:            

\begin{theorem} \label{thm:main1}
In every real biquadratic field, there exists a classical, additively indecomposable quadratic form in $2$ variables.
\end{theorem}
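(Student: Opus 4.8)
The plan is to reduce Theorem~\ref{thm:main1} to a statement purely about indecomposable integers, via the following elementary observation. Let $\alpha\in\O_K$ be totally positive and indecomposable, and let $\beta\in\O_K$ be nonzero with $(\beta^{(i)})^2<(\alpha^{(i)})^2$ for every real embedding $i$ of $K$. Then $Q(x,y)=\alpha x^2+2\beta xy+\alpha y^2$ is a classical, totally positive definite binary quadratic form (classicality is clear, and in each embedding $Q$ has positive leading coefficient $\alpha^{(i)}$ and positive determinant $(\alpha^{(i)})^2-(\beta^{(i)})^2$), and it is additively indecomposable. Indeed, suppose $Q=Q_1+Q_2$ with $Q_1,Q_2$ totally positive semidefinite, say $Q_j=a_jx^2+2b_jxy+c_jy^2$. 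Both diagonal coefficients of $Q$ equal $\alpha$, so $a_1+a_2=\alpha=c_1+c_2$ with $a_j,c_j$ totally nonnegative; since a nonzero totally nonnegative integer is totally positive (else its norm would vanish), indecomposability of $\alpha$ forces $a_1,c_1\in\{0,\alpha\}$. Whenever a diagonal coefficient of some $Q_j$ is $0$, positive semidefiniteness in each embedding forces the matching off-diagonal coefficient $b_j$ to vanish; running through the four possibilities for $(a_1,c_1)$ and using $\beta\neq 0$ in the ``off-diagonal'' cases, one finds that only $Q_1=0$ and $Q_2=0$ survive. Hence the theorem follows once one exhibits, in every real biquadratic field, a totally positive indecomposable $\alpha\in\O_K$ together with a nonzero $\beta\in\O_K$ in the box $\prod_i(-\alpha^{(i)},\alpha^{(i)})$.

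To produce such $\alpha$ and $\beta$, I would use the explicit description of $\O_K$ and of the indecomposable integers of a real biquadratic field $K=\Q(\sqrt{d_1},\sqrt{d_2})$ and its three real quadratic subfields. A useful flexibility is that $\alpha$ may be replaced by $\alpha u^2$ for any totally positive unit $u$ without affecting its indecomposability, which redistributes the four conjugates of $\alpha$; if some such twist makes $\alpha\succ 1$, then $\beta=1$ already works and $Q=\alpha(x^2+y^2)+2xy$. More generally $\beta$ may be taken to be a carefully chosen unit, or an element of a quadratic subfield, arranged so that $|\beta^{(i)}|<\alpha^{(i)}$ holds in all four embeddings. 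Concretely, I would split into the finitely many cases for the shape of $K$ (governed by $d_1,d_2,d_3$ and the congruences fixing an integral basis of $\O_K$), read off a suitable indecomposable $\alpha$ from the classification in each case, and write down a matching $\beta$ built from $1$, a fundamental unit of a subfield, or a $\sqrt{d_i}$-type element, verifying the inequalities $(\beta^{(i)})^2<(\alpha^{(i)})^2$ by hand.

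The hard part is this last geometry-of-numbers step. The box $\prod_i(-\alpha^{(i)},\alpha^{(i)})$ has volume $2^4N(\alpha)$ while $\O_K$ has covolume $\sqrt{|d_K|}$, so the crude Minkowski bound would demand $N(\alpha)\gtrsim\sqrt{|d_K|}$; but norms of indecomposable integers are bounded in terms of $K$, so for fields of large discriminant no slack is available and one must genuinely exploit the biquadratic structure---either by choosing $\beta$ from a subfield in whose direction $\O_K$ is ``dense'', or by using the rank-$3$ unit group (fed by units of the three subfields) to balance the conjugates of $\alpha$ enough that $\beta=1$ suffices. A subsidiary difficulty is checking that the chosen $\alpha$ is indecomposable in the full ring $\O_K$, which can be strictly larger than the compositum of the rings of integers of the subfields. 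Once $\alpha$ and $\beta$ are fixed, everything else is routine, and letting the pair $(\alpha,\beta)$ range over the admissible choices---while keeping track of $\mathrm{GL}_2(\O_K)$-equivalence among the resulting forms---is what would give the lower bounds on the number of inequivalent such forms promised in the abstract.
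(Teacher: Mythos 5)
Your reduction lemma is correct, and it is in fact already in the paper as Proposition~\ref{prop:inde_from_inde} (quoted from [TY]): a binary form with indecomposable diagonal entries and nonzero cross term is additively indecomposable. The problem is that everything after that lemma is a plan rather than a proof, and the plan runs into obstacles that you yourself identify but do not overcome. First, to ``read off a suitable indecomposable $\alpha$ from the classification'' you would need a classification of indecomposable integers in the ring of integers of an arbitrary real biquadratic field; no such classification exists. The known results are partial: Theorem~\ref{thm:Man} guarantees that indecomposables of \emph{two} of the three quadratic subfields survive, and only under ordering hypotheses on $p,q,r$, while the counterexample in Section~4.1 of the paper ($7+2\sqrt{10}$ decomposing in $\Q(\sqrt{2},\sqrt{5})$) shows that survival genuinely fails in general. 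Second, even granting a suitable $\alpha$, the existence of a nonzero $\beta$ with $(\beta^{(i)})^2<(\alpha^{(i)})^2$ for all four embeddings is not routine: indecomposables typically have one conjugate close to $0$, the totally positive unit group is too sparse to move $\alpha$ into the cone where $\beta=1$ works (already in $\Q(\sqrt{17})$ the orbit of $\tfrac{5+\sqrt{17}}{2}$ under totally positive units never has both conjugates exceeding $1$), and, as you note, Minkowski gives nothing because $N(\alpha)$ is bounded while $\sqrt{|d_K|}$ grows. So the central existence claim is unsupported.

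It is worth noting that the paper's actual proof sidesteps your template entirely for the generic cases. Its forms have leading coefficients $2$, $3$, or $4$ --- rational integers that are \emph{decomposable} --- so Proposition~\ref{prop:inde_from_inde} does not apply to them. Instead the paper classifies all decompositions of $2$, $3$, $4$ in biquadratic fields (Lemma~\ref{lem:decomp_biqua}), which forces any putative decomposition $Q=Q_1+Q_2$ to have rational-integer leading coefficients, and then kills such decompositions by estimating $\mathrm{Tr}_{K/\Q}(\det Q_1+\det Q_2)$ to show the summands would have to live in a quadratic subfield, where indecomposability of the form is already known from [TY]. The indecomposable-diagonal construction you propose is used only in the exceptional cases $\sqrt{17}\in K$ and $\Q(\sqrt{21},\sqrt{33})$, precisely where Man's theorem certifies that a specific indecomposable survives and where $\beta=1$ happens to fit inside the box. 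To salvage your approach you would need, at minimum, a proof that every real biquadratic field contains an indecomposable $\alpha$ admitting such a $\beta$; as it stands, that is the entire content of the theorem and it is missing.
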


Therefore, this resembles the situation in quadratic fields. Moreover, in the proof, we indeed use forms that are additively indecomposable in quadratic subfields. 

The second family we focus on is the family of the so-called simplest cubic fields. They were first introduced by Shanks \cite{Sh} and studied intensively since then, e.g., in the relation to indecomposable integers \cite{KT}. We can define them as $K=\Q(\rho)$ where $\rho$ is the largest root of the polynomial $x^3-ax^2-(a+3)x-1$ with $a\in\Z_{\geq -1}$. For them, we prove the following result: 

\begin{theorem} \label{thm:main_simplest_existence}
If $\O_K=\Z[\rho]$, then there exist classical, additively indecomposable quadratic forms in $2$ and $3$ variables in $\Q(\rho)$. 
\end{theorem}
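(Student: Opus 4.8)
The plan is to reduce additive indecomposability to a statement about indecomposable integers and the signature of a Gram matrix. Work with a classical form $Q(\mathbf x)=\mathbf x^{T}M\mathbf x$ given by a symmetric matrix $M=(m_{ij})$ over $\O_K$, totally positive definite, and suppose \emph{every diagonal entry $m_{ii}$ is an additively indecomposable integer}. If $Q=Q_1+Q_2$ with $Q_1,Q_2$ totally positive semidefinite integral forms, write $M=M_1+M_2$. At each index $i$ the entry $m_{ii}$ is a sum of two totally nonnegative elements of $\O_K$, hence --- $m_{ii}$ being indecomposable --- one of them is $0$; and a totally positive semidefinite integral matrix with a vanishing diagonal entry has the whole corresponding row and column zero. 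Therefore each index $i$ is assigned entirely to $M_1$ or to $M_2$, producing a partition $\{1,\dots,n\}=S_1\sqcup S_2$ with $m_{ij}=0$ whenever $i,j$ lie in different blocks; conversely such a partition yields a decomposition by restricting $M$ to its two diagonal blocks. So $Q$ is additively indecomposable \emph{if and only if} no nontrivial partition of the index set has all of its crossing coefficients equal to $0$. In particular, a binary form $\alpha x^{2}+2\beta xy+\gamma y^{2}$ with $\alpha,\gamma$ indecomposable is additively indecomposable as soon as $\beta\neq 0$, and a ternary form with indecomposable diagonal is additively indecomposable as soon as at most one of its three off-diagonal coefficients vanishes.

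By this criterion, the binary case of Theorem~\ref{thm:main_simplest_existence} amounts to finding indecomposable integers $\alpha,\gamma\in\Z[\rho]$ and a nonzero $\beta\in\Z[\rho]$ for which $\alpha x^{2}+2\beta xy+\gamma y^{2}$ is totally positive definite, i.e. $\sigma(\beta)^{2}<\sigma(\alpha)\sigma(\gamma)$ for every real embedding $\sigma$ of $\Q(\rho)$. Here I would invoke the explicit description of the indecomposable integers of the simplest cubic fields with $\O_K=\Z[\rho]$ from \cite{KT}: up to totally positive units and Galois conjugation there are only finitely many, and one selects $\alpha$ among them (taking $\gamma=\alpha$, or $\gamma$ a Galois conjugate of $\alpha$). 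The three conjugates of $\rho$ consist of one large root (of size about $a+1$) and two roots near $-1$ and $0$; the point is to choose $\beta$ --- something of the shape $\rho+1$, which is small in the embedding sending $\rho$ near $-1$ --- so that $\sigma(\beta)^{2}$ stays below $\sigma(\alpha)\sigma(\gamma)$ in \emph{all three} embeddings simultaneously. For the finitely many small $a$ not covered by these asymptotics, one exhibits suitable $\alpha,\beta$ directly from the known indecomposables of those particular fields. Since $\beta\neq 0$, the criterion above gives an additively indecomposable binary form.

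For the ternary case I would graft a third variable onto the binary form: keep $\alpha,\beta,\gamma$ as above, put the $xz$-coefficient equal to $0$, and add an indecomposable integer $\zeta$ as the coefficient of $z^{2}$ and a nonzero $\epsilon$ as the coefficient of $yz$, obtaining
\[
Q_3(x,y,z)=\alpha x^{2}+\gamma y^{2}+\zeta z^{2}+2\beta xy+2\epsilon yz .
\]
Its Gram matrix is totally positive definite precisely when, in addition to the binary condition $\sigma(\alpha\gamma-\beta^{2})>0$, one has $\sigma(\zeta)\,\sigma(\alpha\gamma-\beta^{2})>\sigma(\alpha)\,\sigma(\epsilon)^{2}$ for all $\sigma$; this is secured exactly as for $\beta$, choosing $\zeta$ from the list of indecomposables (possibly $\zeta=\alpha$) large enough and $\epsilon$ small in the pinched embedding, again with the finitely many small $a$ handled by hand. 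Exactly one off-diagonal coefficient (the $xz$ one) is $0$, so no nontrivial partition of $\{1,2,3\}$ has all its crossing coefficients zero, and the criterion shows $Q_3$ is additively indecomposable.

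The main obstacle is this explicit production of the off-diagonal entries. A soft Minkowski lattice-point argument in the box $\{\,|\sigma(\beta)|<\sqrt{\sigma(\alpha)\sigma(\gamma)}\,\}$ does not suffice: $|d_K|=(a^{2}+3a+9)^{2}$ grows faster than the square of the largest norm of an indecomposable integer of $\Z[\rho]$, so $\beta$ (and likewise $\epsilon$) must be written down explicitly and its conjugates estimated uniformly in $a$, with the residual small-$a$ fields treated one at a time. A minor but genuine point to pin down beforehand is that ``additively indecomposable'' is understood relative to decompositions into \emph{integral} totally positive semidefinite forms, which is exactly what makes the diagonal/partition lemma go through.
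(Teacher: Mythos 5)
Your reduction is sound: the partition criterion for forms whose diagonal entries are all indecomposable integers is correct (a totally positive semidefinite Gram matrix with a vanishing diagonal entry has zero row and column in every embedding, so each index is assigned to exactly one summand), and it cleanly generalizes both Proposition~\ref{prop:inde_from_inde} and Proposition~\ref{prop:diam}. But the proof is not complete, because the substantive half of the theorem --- actually exhibiting a totally positive definite form of the required shape, uniformly in $a$ --- is only sketched. You write that one ``selects $\alpha$ among'' the indecomposables, that ``the point is to choose $\beta$ --- something of the shape $\rho+1$,'' and that $\beta$ and $\epsilon$ ``must be written down explicitly and [their] conjugates estimated uniformly in $a$, with the residual small-$a$ fields treated one at a time''; none of this is carried out, and you yourself identify it as the main obstacle. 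Since verifying $\sigma(\beta)^2<\sigma(\alpha)\sigma(\gamma)$ in all three embeddings is exactly where the work lies (your own remark that a Minkowski-box argument fails shows it is not automatic), the existence claim is not established.

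For comparison, the paper closes this gap with explicit choices. For three variables it takes the diagonal $1+\rho+\rho^2$, $1+\rho'+\rho'^2$, $1+\rho''+\rho''^2$ with $xy$- and $yz$-coefficients equal to $2$ and $xz$-coefficient $0$ (so $\beta=\epsilon=1$, no clever small element needed); total positive definiteness follows from Sylvester's criterion because the identity $N_{K/\Q}(1+\rho+\rho^2)=\mathrm{Tr}_{K/\Q}(1+\rho+\rho^2)=a^2+3a+9$ collapses the determinant to $1+\rho'+\rho'^2\succ 0$, and the $2\times 2$ minor is handled via its characteristic polynomial. For two variables the paper sidesteps your criterion entirely: it uses $2x^2+2xy+(1+\rho+\rho^2)y^2$, whose diagonal entry $2$ is \emph{not} indecomposable; instead it proves (Lemma~\ref{lem:dec23_simplest}) that $2$ decomposes in $\Z[\rho]$ only as $0+2$ or $1+1$, and then checks $2(1+\rho+\rho^2)\succ 1$ by minimizing $2x^2+2x+2$. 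If you want to salvage your route for the binary case, the paper's $2\times 2$ minor computation already shows that $(1+\rho+\rho^2)x^2+2xy+(1+\rho'+\rho'^2)y^2$ works (both diagonal entries indecomposable, since Galois conjugation preserves indecomposability in these Galois fields), but that verification is precisely the step you left open.
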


Note that $\O_K=\Z[\rho]$ occurs in infinitely many cases, e.g., whenever $a^2+3a+9$ is square-free.

Moreover, we can also ask about the number of such forms up to equivalence. In this case, we use some nice properties of indecomposable integers to provide a lower bound.

\begin{theorem} \label{thm:simplest_lower_bound}
Let $\Q(\rho)$ be a simplest cubic field with $\O_K=\Z[\rho]$ such that $a\geq 6$, and let $a=6A+a_0$ where $a_0\in\{0,1,2,3,4,5\}$. Up to equivalence, the number of classical, additively indecomposable binary quadratic forms in $\Q(\rho)$ is at least:
\begin{enumerate}
\item $\frac{27A^2+21A-6}{2}$ if $a_0=0$,
\item $\frac{27A^2+39A}{2}$ if $a_0=1$,
\item $\frac{27A^2+39A}{2}$ if $a_0=2$,
\item $\frac{27A^2+57A+12}{2}$ if $a_0=3$,
\item $\frac{27A^2+57A+18}{2}$ if $a_0=4$,
\item $\frac{27A^2+75A+36}{2}$ if $a_0=5$.
\end{enumerate}
\end{theorem}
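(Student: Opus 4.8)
The plan is to produce an explicit family of classical totally positive definite binary quadratic forms over $\mathcal{O}_K=\Z[\rho]$, each attached to an indecomposable integer of $\Q(\rho)$, to prove that every form in the family is additively indecomposable, and then to bound from below the number of pairwise inequivalent forms among them. The construction refines the one behind Theorem~\ref{thm:main_simplest_existence}, the new ingredient being the explicit classification of the indecomposable integers of $\Q(\rho)$ valid for $\mathcal{O}_K=\Z[\rho]$ and $a\ge 6$ (see \cite{KT}).

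The key reduction for additive indecomposability is the following. Let $Q$ be a classical totally positive definite binary form, with Gram matrix $\left(\begin{smallmatrix}\alpha & \beta\\ \beta & \gamma\end{smallmatrix}\right)$, whose determinant $\det Q=\alpha\gamma-\beta^{2}$ is an indecomposable integer. In any decomposition $Q=Q_{1}+Q_{2}$ into nonzero totally positive semidefinite classical forms at least one summand has rank one: otherwise both are rank two, and the superadditivity of the determinant on positive semidefinite $2\times2$ matrices (a special case of Minkowski's determinant inequality), applied in every real embedding, would exhibit $\det Q$ as a sum of two totally positive integers, contradicting indecomposability. Hence $Q$ is additively indecomposable as soon as one cannot subtract a nonzero rank-one totally positive semidefinite classical form from $Q$ and remain totally positive semidefinite. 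Writing such a rank-one form out, this becomes an explicit system of inequalities over $\mathcal{O}_K$, which I would show to be unsolvable using the known signs and magnitudes of $\rho,\rho',\rho''$ (one large positive conjugate, one conjugate near $-1$, one small negative conjugate), the analogous data for the relevant totally positive units such as $\rho^{2}$, and the position of $\det Q$ within the classification of \cite{KT}. Concretely, I would choose the entries of $Q$ so that $\det Q$ equals a prescribed indecomposable integer and $Q$ is ``reduced'' (its entries as small as the constraints allow), which is precisely what forces the rank-one inequalities to fail.

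Finally I would count. $\mathcal{O}_K$-equivalent forms represent the same totally positive integers (so they share the same minimum and represent the same indecomposable integers) and have determinants differing by the square of a unit; any of these invariants separates our forms into inequivalent classes, so the count reduces to enumerating the admissible indecomposable integers coming from \cite{KT}. That enumeration is governed by floor quantities such as $\lfloor a/2\rfloor$ and $\lfloor a/3\rfloor$, piecewise linear in $a$ with period dividing~$6$; substituting $a=6A+a_{0}$ turns them into exact linear expressions in $A$, and summing yields the six quadratic bounds in the statement. I expect the two genuine difficulties to be (i) the verification of additive indecomposability, i.e.\ showing that the rank-one inequalities above admit no solution — this is where the structure of indecomposables enters essentially — and (ii) the case-by-case enumeration modulo~$6$, which is routine in spirit but must be carried out carefully to obtain the exact constants.
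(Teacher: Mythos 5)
Your overall skeleton --- attach forms to indecomposable integers, use the determinant (up to totally positive units) as the equivalence invariant, and turn the count into exact linear expressions in $A$ via $a=6A+a_0$ --- matches the paper's, and your rank-one reduction is sound: since a nonzero totally nonnegative algebraic integer with a vanishing conjugate has norm zero, superadditivity of the determinant does force one summand to have determinant $0$ whenever $\det Q$ is indecomposable. But there are two genuine gaps. First, you never specify the family of forms, and that is where all the content lives. The paper takes $2x^2+2xy+\alpha(v,w)y^2$ for those $\alpha(v,w)\in\bt$ with $2\alpha(v,w)\succ 1$ (Proposition \ref{prop:2alpha}), together with their images under $T_1,T_2$; these have indecomposable determinant $2\alpha(v,w)-1\in\bt$, but counting them gives only $\tfrac{27A^2+9A}{2}$ forms when $a_0=0$, strictly less than the claimed $\tfrac{27A^2+21A-6}{2}$ for $A\geq 1$. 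To reach the stated bounds the paper must add a second family $3x^2+2xy+\alpha y^2$ (Proposition \ref{prop:3alpha}), whose determinants $3\alpha-1$ are \emph{decomposable} integers, so your key reduction does not apply to them; their indecomposability is instead proved via the classification of decompositions of $3$ in $\O_K^{+}\cup\{0\}$ (Lemma \ref{lem:dec23_simplest}) together with the conditions $3\alpha\succ 1$ and $2\alpha\not\succeq 1$, the latter verified with a totally positive element of the codifferent. Your ``reduced form with prescribed indecomposable determinant'' heuristic would have to produce at least $6A-3$ additional inequivalent forms to compensate, and nothing in the proposal guarantees this: note that the obvious candidate $x^2+\delta y^2$ is always decomposable, so realizing a prescribed determinant by an additively indecomposable classical form is not automatic.

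Second, the inequivalence argument is incomplete as stated: two distinct indecomposable integers can a priori be associated, so ``different determinants'' does not by itself give ``inequivalent forms''. The paper closes this by observing that the relevant determinants lie in (or are sums of elements of) $\bt$ and hence sit strictly inside a fundamental domain for multiplication by totally positive units, so that associated implies equal; it then separates the $T_1$- and $T_2$-translates by parity and mod-$3$ conditions on the coefficients in the basis $\{1,\rho,\rho^2\}$. You would need an analogous argument, and the invariant ``represents the same integers'' that you mention is not one you could compute here. In short, the proposal is a plausible plan whose two essential steps --- the construction and verification of the second family with decomposable determinant, and the unit-orbit separation of the determinants --- are missing.
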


The paper is organized as follows. In Section \ref{sec:preli}, we give some necessary notation and results used later in this paper. Section \ref{sec:nondec} focuses more on additively indecomposable quadratic forms; we introduce there some known results as well as a new result showing a possible type of additively indecomposable quadratic forms using indecomposable integers (Proposition \ref{prop:diam}). Section \ref{sec:biquadratic} is devoted to biquadratic fields and the proof of Theorem \ref{thm:main1}. For that, we first study general cases and, in Subsection \ref{subsec:special}, we resolve the remaining exceptional cases. In the end, in Section \ref{sec:simplest}, we turn our attention to the simplest cubic fields and prove both Theorems \ref{thm:main_simplest_existence} and \ref{thm:simplest_lower_bound}.

\section{Preliminaries} \label{sec:preli}

Let $K$ be a totally real number field of degree $d$, and let $\sigma_i$ for $1\leq i\leq d$ be its embeddings into $\R$. We say that an element $\alpha\in K$ is totally positive if $\sigma_i(\alpha)>0$ for all $1\leq i\leq d$. By the symbol $\text{Tr}_{K/\Q}(\alpha)$, we will denote the trace of $\alpha$ over $K$, i.e., $\text{Tr}_{K/\Q}(\alpha)=\sum_{i=1}^d\sigma_i(\alpha)$. Similarly, by the norm of $\alpha$ over $K$, we will mean $N_{K/\Q}(\alpha)=\prod_{i=1}^d\sigma_i(\alpha)$. 

By $\O_K$, we denote the ring of algebraic integers in $K$, and $\O_K^{+}$ stands for its subset of totally positive algebraic integers. We say that an element $\alpha\in \O_K^{+}$ is indecomposable if we cannot express it as a sum of two totally positive algebraic integers. We write $\alpha \succ \beta$ for $\alpha,\beta\in\O_K$ if $\alpha-\beta\in\O_K^{+}$, and we also use symbol $\succeq$ if we include equality. It is straightforward that if $\alpha\succeq \beta$, then $\text{Tr}_{K/\Q}(\alpha)\geq \text{Tr}_{K/\Q}(\beta)$ and $N_{K/\Q}(\alpha)\geq N_{K/\Q}(\beta)$.   

An $n$-ary quadratic form is an expression of the type \[\Q(x_1,\ldots,x_n)=\sum_{1\leq i\leq j\leq n}a_{ij}x_ix_j\]   
where $a_{ij}\in\O_K$. We say that $Q$ is classical if $2|a_{ij}$ for $i\neq j$; otherwise, we call it non-classical. A form $Q$ is totally positive semi-definite if $Q(x_1,\ldots,x_n)\in\O_K^{+}\cup\{0\}$ for all $x_i\in\O_K$, and it is, moreover, totally positive definite if $Q(x_1,\ldots,x_n)=0$ only if $x_1=x_2=\cdots=x_n=0$. Unless stated otherwise, in this paper, we always assume that our forms are classical. 

By the Gram matrix $M_Q$ of $Q$, we mean
\[
M_Q=\left(\begin{matrix}
a_{11} & \frac{a_{12}}{2} & \dots & \frac{a_{1n}}{2}\\
\frac{a_{12}}{2} & a_{22} & \dots & \frac{a_{2n}}{2}\\
\vdots & \vdots & \ddots & \vdots \\
\frac{a_{1n}}{2} & \frac{a_{2n}}{2} & \dots & a_{nn} 
\end{matrix}\right).
\]
The symbol $\det(Q)$ denotes the determinant of $Q$ and is equal to the determinant of $M_Q$. A totally positive definite quadratic form $Q$ is additively indecomposable if we cannot express it as $Q=Q_1+Q_2$ where $Q_1$ and $Q_2$ are totally positive semi-definite quadratic forms in $n$ variables such that $Q_1,Q_2\neq 0$. In this definition, we suppose that both $Q_1$ and $Q_2$ (as well as $Q$) are classical. However, we can also allow $Q_1$, $Q_2$ or $Q$ to be non-classical, which we do in Section \ref{sec:nondec}. However, in the rest of the paper, we consider only decomposition of (classical) forms into classical quadratic forms.   
We say that two quadratic forms $Q$ and $H$ with $n$ variables are equivalent if there is a matrix $N\in\O_K^{n\times n}$ such that $M_Q=NM_HN^{t}$ where $\det(N)$ is a unit in $\O_K$. From this, it is clear that if two quadratic forms $Q$ and $H$ are equivalent, then their determinants are associated, i.e., $\det(M_Q)=\varepsilon\det(M_H)$ for some unit $\varepsilon\in\O_K^+$.

\section{Indecomposability of forms} \label{sec:nondec}

In this part, we show how additively indecomposable quadratic forms in $K$ can look like. That was inspired by the following result obtained in \cite{TY}. 

\begin{prop}[{\cite[Proposition 5.10]{TY}}] \label{prop:inde_from_inde}
Let $K$ be a totally real number field, and let $Q(x,y)=\alpha x^2+\beta xy+\gamma y^2 \in \O_K[x,y]$ be totally positive definite. If $\alpha$ and $\gamma$ are indecomposable integers in $\O_K$ and $\beta\neq 0$, then $Q$ cannot be decomposed as a sum of non-classical, totally positive semi-definite quadratic forms.
\end{prop}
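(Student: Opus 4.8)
The plan is to argue by contradiction, exploiting the interaction between the indecomposability of the diagonal coefficients $\alpha,\gamma$ and the rigidity of totally positive semi-definite binary forms whose Gram matrix has a vanishing diagonal entry. Suppose $Q=Q_1+Q_2$ with $Q_1,Q_2\neq 0$ classical and totally positive semi-definite, and write $Q_i(x,y)=\alpha_i x^2+\beta_i xy+\gamma_i y^2$, so that $\alpha=\alpha_1+\alpha_2$, $\beta=\beta_1+\beta_2$, and $\gamma=\gamma_1+\gamma_2$.

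The first step is to record two elementary facts. Evaluating at $(1,0)$ and $(0,1)$ shows $\alpha_i=Q_i(1,0)\in\O_K^{+}\cup\{0\}$ and $\gamma_i=Q_i(0,1)\in\O_K^{+}\cup\{0\}$. Second, and this is the key structural observation, if the $(1,1)$-entry $\alpha_i$ of $M_{Q_i}$ is zero, then $\beta_i=0$ as well (and symmetrically if $\gamma_i=0$). Indeed, applying any real embedding $\sigma$ turns $M_{Q_i}$ into the real symmetric positive semi-definite matrix $\begin{pmatrix}\sigma(\alpha_i) & \sigma(\beta_i)/2\\ \sigma(\beta_i)/2 & \sigma(\gamma_i)\end{pmatrix}$; its nonnegative determinant forces $\sigma(\beta_i)^2\leq 4\sigma(\alpha_i)\sigma(\gamma_i)$, so $\sigma(\alpha_i)=0$ yields $\sigma(\beta_i)=0$. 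Since this holds for every embedding, $\beta_i=0$.

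Next I would combine these with the hypotheses. Since $\alpha=Q(1,0)$ is totally positive ($Q$ being totally positive definite) and indecomposable, while $\alpha_1,\alpha_2\succeq 0$, exactly one of $\alpha_1,\alpha_2$ vanishes: not both, since their sum is nonzero, and not neither, since otherwise $\alpha$ would be a sum of two totally positive integers. After relabeling, say $\alpha_1=0$, whence $\beta_1=0$ and $Q_1=\gamma_1 y^2$. Applying the same reasoning to the indecomposable $\gamma=\gamma_1+\gamma_2$ shows one of $\gamma_1,\gamma_2$ is zero. If $\gamma_1=0$, then $Q_1=0$, contradicting $Q_1\neq 0$; if instead $\gamma_2=0$, then $\beta_2=0$ by the structural observation, so $\beta=\beta_1+\beta_2=0$, contradicting $\beta\neq 0$. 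Either branch is impossible, so no such decomposition exists and $Q$ is additively indecomposable.

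The argument is largely bookkeeping; the one point requiring genuine care is the structural lemma that a zero diagonal entry of a totally positive semi-definite binary form forces the corresponding off-diagonal entry to vanish. This is precisely where the hypothesis of decomposition into \emph{semi-definite} (rather than merely coefficientwise totally positive) forms is used, and it is what ultimately renders the nonzero cross term $\beta$ incompatible with splitting off a pure square. I expect stating this lemma cleanly to be the main task; the ensuing case analysis is then immediate.
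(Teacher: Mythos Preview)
Your proof is correct. The paper does not give its own proof of this proposition (it is cited from \cite{TY}), but your argument is essentially the same as the one the paper uses for the generalization in Proposition~\ref{prop:diam}: force a zero diagonal entry in each summand via indecomposability, then use positive semi-definiteness of the Gram matrix to kill the corresponding off-diagonal entry, contradicting $\beta\neq 0$.
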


This proposition has the following direct implication.

\begin{prop}
Over every totally real field $K$, there exists a non-classical, totally positive definite quadratic form with $2$ variables which cannot be decomposed as a sum of non-classical, totally positive semi-definite quadratic forms.
\end{prop}

\begin{proof}
To prove that, it suffices to consider the quadratic form $x^2+xy+y^2$. 
The determinant of this form is $\frac{3}{4}$, so it is totally positive definite by Sylvester's criterion. Then, by Proposition \ref{prop:inde_from_inde}, it cannot be decomposed since $1$ is an indecomposable integer in every totally real number field. 
\end{proof}

For a larger number of variables, Proposition \ref{prop:inde_from_inde} can be generalized in the following way.

\begin{prop} \label{prop:diam}
Let $Q(x_1,\ldots,x_n)=\sum_{i=1}^n\alpha x_i^2+\sum_{j=1}^{n-1}\beta_{i, i+1}x_i x_{i+1}$ be a totally positive definite quadratic form over $\O_K$ where $\beta_{i,i+1}\neq 0$ and elements $\alpha_i$ are indecomposable integers in $\O_K$. Then $Q$ cannot be decomposed as a sum of non-classical, totally positive semi-definite quadratic forms. 
\end{prop}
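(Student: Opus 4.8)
The plan is to reduce this to Proposition~\ref{prop:inde_from_inde} by looking at how a decomposition must act on the ``diagonal path'' $x_1 - x_2 - \cdots - x_n$ of nonzero off-diagonal coefficients. Suppose for contradiction that $Q = Q_1 + Q_2$ with $Q_1, Q_2$ totally positive semi-definite, nonzero, and classical. For each index $i$, restricting to the subspace where only $x_i$ (resp. only $x_i, x_{i+1}$) is nonzero, we get that $\alpha_i = \alpha_i^{(1)} + \alpha_i^{(2)}$ with each $\alpha_i^{(k)}$ a totally positive semi-definite value, hence in $\O_K^+ \cup \{0\}$; since $\alpha_i$ is indecomposable, one of the two summands, say $\alpha_i^{(2)}$, must be $0$, which forces the entire $i$-th diagonal entry of $M_{Q_2}$ to vanish. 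The key point is then that a totally positive semi-definite quadratic form with a zero diagonal entry must have the entire corresponding row and column of its Gram matrix equal to zero: if $a_{ii} = 0$ but $a_{ij}/2 \neq 0$ for some $j$, then evaluating on $x_i = t$, $x_j = 1$ (all else zero) gives $a_{ij} t + a_{jj}$, which is negative for a suitable choice of $t$ (by totally positive definiteness one embedding makes it genuinely linear), contradicting semi-definiteness --- this is exactly the mechanism already used in the proof of Proposition~\ref{prop:inde_from_inde}.

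Carrying this out at every index: write $S_k = \{\, i : \alpha_i^{(k)} \neq 0 \,\}$ for $k = 1, 2$, so $S_1 \cup S_2 = \{1, \ldots, n\}$. By the row/column vanishing observation, if $i \notin S_k$ then the $i$-th row and column of $M_{Q_k}$ are zero; in particular the coefficient $\beta_{i,i+1}$ of $Q$, which equals $\beta_{i,i+1}^{(1)} + \beta_{i,i+1}^{(2)}$, can only receive a nonzero contribution from $Q_k$ when both $i \in S_k$ and $i+1 \in S_k$. Since $\beta_{i,i+1} \neq 0$, for each $i$ at least one $k$ has $\{i, i+1\} \subseteq S_k$. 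Now I would argue that this connectivity along the path $1 - 2 - \cdots - n$, together with $S_1 \cup S_2 = \{1,\dots,n\}$, forces one of $S_1, S_2$ to be all of $\{1,\dots,n\}$: if neither is, pick $i \notin S_1$ and $j \notin S_2$; walking along the integer interval between $i$ and $j$, there is a consecutive pair $\{\ell, \ell+1\}$ that is a subset of neither $S_1$ nor $S_2$ (the membership in $S_1$ must switch off somewhere, and in $S_2$ switch off somewhere, and a little care shows these ``switch-off'' edges can be chosen to force a bad edge), contradicting the previous sentence. Hence, say, $S_2 = \emptyset$, i.e. every $\alpha_i^{(2)} = 0$, so by the row/column observation $M_{Q_2} = 0$ and $Q_2 = 0$, the desired contradiction.

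The main obstacle is the purely combinatorial connectivity step in the last paragraph --- making precise that $S_1 \cup S_2 = \{1,\dots,n\}$ together with ``every edge $\{i,i+1\}$ lies inside $S_1$ or inside $S_2$'' implies $S_1 = \{1,\dots,n\}$ or $S_2 = \{1,\dots,n\}$. One clean way: suppose $S_1 \neq \{1,\dots,n\}$, let $i$ be minimal with $i \notin S_1$; then the edge $\{i-1, i\}$ (if $i > 1$) is not inside $S_1$, so it is inside $S_2$, and inductively one shows $\{i, i+1, \ldots, n\} \subseteq S_2$ and $\{1, \ldots, i-1\}$-type arguments on the other side close it up, yielding $S_2 = \{1,\dots,n\}$. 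I would also double-check the base/degenerate cases ($n = 1$, or an $S_k$ that is an ``interval'' breaking into two pieces) and the fact that for a \emph{classical} form the half-integrality of off-diagonal entries plays no role here beyond letting us speak of Gram matrices, so the argument goes through verbatim. Everything else (the splitting $\alpha_i = \alpha_i^{(1)} + \alpha_i^{(2)}$ into totally positive semi-definite pieces, and the linear-in-one-variable sign argument) is immediate from the definitions and from the technique already established for Proposition~\ref{prop:inde_from_inde}.
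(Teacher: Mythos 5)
Your proposal follows essentially the same route as the paper: indecomposability forces each diagonal coefficient $\alpha_i$ to land entirely in one of $Q_1,Q_2$, and the contradiction comes from a $2\times 2$ principal submatrix of a Gram matrix having a zero diagonal entry next to a nonzero off-diagonal entry, which cannot be totally positive semi-definite (its determinant is $-(\beta/2)^2\prec 0$). The only packaging difference is that the paper localizes the argument to a single ``switching'' index $k$ with $\alpha_k^{(1)}\neq 0$ and $\alpha_{k+1}^{(1)}=0$ (such an index exists because a nonzero totally positive semi-definite form cannot have all diagonal entries zero), while you run the same mechanism globally through the support sets $S_1,S_2$ and a row/column-vanishing lemma.

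The step you flag as the main obstacle dissolves once you invoke a fact you already established in your first paragraph: since each $\alpha_i$ is indecomposable and $\alpha_i^{(1)},\alpha_i^{(2)}\in\O_K^+\cup\{0\}$, \emph{exactly} one of the two summands is nonzero, so $S_1$ and $S_2$ are not merely covering but complementary. Be aware that with only $S_1\cup S_2=\{1,\dots,n\}$ your combinatorial claim is genuinely false: for $n=3$, $S_1=\{1,2\}$ and $S_2=\{2,3\}$ cover $\{1,2,3\}$, every edge lies inside one of them, yet neither is the whole set. With $S_2=\{1,\dots,n\}\setminus S_1$, however, the step is a one-liner: if some edge $\{\ell,\ell+1\}$ had $\ell\in S_1$ and $\ell+1\in S_2$, that edge would be contained in neither set, contradicting $\beta_{\ell,\ell+1}\neq 0$; hence all indices lie in the same part, say $S_1$, so $S_2=\emptyset$, all diagonal entries of $Q_2$ vanish, your row/column lemma gives $M_{Q_2}=0$, and $Q_2=0$ --- the desired contradiction. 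With that repair the argument is complete and correct.
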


\begin{proof}
In the following, we will use the fact that the only decomposition of elements $\alpha_i$ is as $\alpha_i+0$. To get a contradiction, let us suppose that there exist totally positive semi-definite quadratic forms $Q_1=\sum_{i=1}^n\alpha_i^{(1)}x_i^2+\sum_{1\leq i<j\leq n}\beta_{i,j}^{(1)}x_ix_j$ and $Q_2=\sum_{i=1}^n\alpha_i^{(2)}x_i^2+\sum_{1\leq i<j\leq n}\beta_{i,j}^{(2)}x_ix_j$ such that $Q=Q_1+Q_2$ and $Q_1,Q_2\neq 0$. Clearly, for all $1\leq i\leq n$, either $\alpha_i^{(1)}=\alpha_i$ and $\alpha_i^{(2)}=0$, or $\alpha_i^{(1)}=0$ and $\alpha_i^{(2)}=\alpha_i$.

Since both $Q_1$ and $Q_2$ are totally positive semi-definite and non-zero, without loss of generality, there exists an index $k$, $1\leq k\leq n$, such that $\alpha_k^{(1)}\neq 0$ and $\alpha_{k+1}^{(1)}= 0$. Moreover, $\beta_{k,k+1}\neq 0$, which implies that at least one of $\beta_{k,k+1}^{(1)}$ and $\beta_{k,k+1}^{(2)}$ is non-zero. Without loss of generality, suppose the first case. However, the principal submatrix
\[
\left(
\begin{matrix}
\alpha_k^{(1)} & \beta_{k,k+1}^{(1)}\\
\beta_{k,k+1}^{(1)} & 0
\end{matrix}
\right)
\]
of the Gram matrix of $Q_1$ is not totally positive semi-definite, which implies that $Q_1$ is not totally positive semi-definite, a contradiction.      
\end{proof}

Moreover, we can use Proposition \ref{prop:diam} for classical quadratic forms. If such a form satisfies the requirements of this proposition, it is not possible to decompose it into non-classical quadratic forms, which also covers classical forms, i.e., it is additively indecomposable in accordance with our definition.

\section{Real biquadratic fields} \label{sec:biquadratic}

In this part, we will consider real biquadratic fields, which are fields of the form $K=\Q(\sqrt{p},\sqrt{q})$ where $p,q>1$, $p\neq q$ are square-free. Moreover, the remaining square root lying in $K$ is $\sqrt{r}=\frac{\sqrt{pq}}{\gcd(p,q)}$. Thus, $K$ has quadratic subfields $\Q(\sqrt{p})$, $\Q(\sqrt{q})$ and $\Q(\sqrt{r})$. After possible intechanging of the role of $p, q$ and $r$, we distinguish the following cases of integral bases \cite{Ja,Wi}:
\begin{enumerate}
\item if $p\equiv 2\;(\text{mod }4)$ and $q\equiv 3\;(\text{mod }4)$, then $\O_K=\Z\Big[1,\sqrt{p},\sqrt{q},\frac{\sqrt{p}+\sqrt{r}}{2}\Big]$,
\item if $p\equiv 2\;(\text{mod }4)$ and $q\equiv 1\;(\text{mod }4)$, then $\O_K=\Z\Big[1,\sqrt{p},\frac{1+\sqrt{q}}{2},\frac{\sqrt{p}+\sqrt{r}}{2}\Big]$,
\item if $p\equiv 3\;(\text{mod }4)$ and $q\equiv 1\;(\text{mod }4)$, then $\O_K=\Z\Big[1,\sqrt{p},\frac{1+\sqrt{q}}{2},\frac{\sqrt{p}+\sqrt{r}}{2}\Big]$,
\item if $p,q\equiv 1\;(\text{mod }4)$, then
\begin{enumerate}
\item $\O_K=\Z\Big[1,\frac{1+\sqrt{p}}{2},\frac{1+\sqrt{q}}{2},\frac{1+\sqrt{p}+\sqrt{q}+\sqrt{r}}{4}\Big]$ if $\frac{p}{\gcd(p,q)}\equiv \frac{q}{\gcd(p,q)}\equiv 1\;(\text{mod }4)$,
\item $\O_K=\Z\Big[1,\frac{1+\sqrt{p}}{2},\frac{1+\sqrt{q}}{2},\frac{1-\sqrt{p}+\sqrt{q}+\sqrt{r}}{4}\Big]$ if $\frac{p}{\gcd(p,q)}\equiv \frac{q}{\gcd(p,q)}\equiv 3\;(\text{mod }4)$.
\end{enumerate}
\end{enumerate}
Let us consider $\alpha=x+y\sqrt{p}+z\sqrt{q}+w\sqrt{r}\in K$. Embeddings of $\Q(\sqrt{p},\sqrt{q})$ into $\C$ are given in the following way:
\begin{align*}
\sigma_1(\alpha)&=x+y\sqrt{p}+z\sqrt{q}+w\sqrt{r},\\
\sigma_2(\alpha)&=x-y\sqrt{p}+z\sqrt{q}-w\sqrt{r},\\
\sigma_3(\alpha)&=x+y\sqrt{p}-z\sqrt{q}-w\sqrt{r},\\
\sigma_4(\alpha)&=x-y\sqrt{p}-z\sqrt{q}+w\sqrt{r}.
\end{align*}
By \cite[Lemma 3.1]{CLSTZ}, if $\alpha$ is totally positive, then it holds that
\begin{equation} \label{eq:biqua_inequ}
x>|y|\sqrt{p},|z|\sqrt{q},|w|\sqrt{r}.
\end{equation}

It is not always true that indecomposable integers from $\Q(\sqrt{p})$, $\Q(\sqrt{q})$ and $\Q(\sqrt{r})$ remain indecomposable in $\Q(\sqrt{p},\sqrt{p})$. Some cases when they do are described in the following theorem:

\begin{theorem}[{\cite[Theorem 4.3]{Man}}] \label{thm:Man}
Let $p$ and $q$ be as above.
\begin{enumerate}
\item Suppose $K$ is a biquadratic field of Case (1--3), and $p < r$. Then the indecomposable integers from $\Q(\sqrt{p})$ and $\Q(\sqrt{q})$ remain indecomposable in $\Q(\sqrt{p},\sqrt{q})$.
\item Suppose $K$ is a biquadratic field of Case (4), and $p < q < r$, then the
indecomposable integers from $\Q(\sqrt{p})$ and $\Q(\sqrt{q})$ remain indecomposable in $\Q(\sqrt{p},\sqrt{q})$.
\end{enumerate}
\end{theorem}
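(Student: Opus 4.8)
The plan is to prove the contrapositive by projecting onto the quadratic subfield. I treat the statement for $\Q(\sqrt p)$, the case of $\Q(\sqrt q)$ being symmetric. Let $\alpha\in\O_{\Q(\sqrt p)}$ be totally positive and indecomposable in $\Q(\sqrt p)$, and suppose for contradiction that $\alpha=\beta+\gamma$ with $\beta,\gamma\in\O_K^{+}$ and $\beta,\gamma\neq 0$. Since $\Q(\sqrt p)$ is exactly the fixed field of the automorphism $\sigma_3$ (which sends $\sqrt p\mapsto\sqrt p$, $\sqrt q\mapsto-\sqrt q$, $\sqrt r\mapsto-\sqrt r$), I would introduce the averaging projection $\pi(\xi)=\tfrac12\big(\xi+\sigma_3(\xi)\big)$, whose image is $\Q(\sqrt p)$ and which fixes $\alpha$. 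Applying $\pi$ to the decomposition gives $\alpha=\pi(\beta)+\pi(\gamma)$, and in coordinates $\pi(x+y\sqrt p+z\sqrt q+w\sqrt r)=x+y\sqrt p$.

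First I would check that $\pi$ preserves total positivity. Because $\sigma_3$ permutes the four embeddings of $K$ in the two pairs $\{\sigma_1,\sigma_3\}$ and $\{\sigma_2,\sigma_4\}$ lying over the two embeddings of $\Q(\sqrt p)$, each embedding of $\pi(\beta)$ equals a nonnegative average of two embeddings of $\beta$; concretely $\sigma_1(\pi(\beta))=\tfrac12(\sigma_1(\beta)+\sigma_3(\beta))$ and $\sigma_2(\pi(\beta))=\tfrac12(\sigma_2(\beta)+\sigma_4(\beta))$, both positive. Hence $\pi(\beta)$ and $\pi(\gamma)$ are totally positive in $\Q(\sqrt p)$, and neither is zero. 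Thus \emph{if} $\pi(\beta)$ and $\pi(\gamma)$ were algebraic integers, we would already have a proper decomposition of $\alpha$ inside $\Q(\sqrt p)$, contradicting its indecomposability.

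The remaining, and main, difficulty is integrality: only $2\pi(\beta)=\beta+\sigma_3(\beta)\in\O_K\cap\Q(\sqrt p)=\O_{\Q(\sqrt p)}$ is automatic, so a priori $\pi(\beta)\in\tfrac12\O_{\Q(\sqrt p)}$. I would run the argument case by case over the integral bases (1)--(4): writing $\beta$ in the given basis, one reads off precisely when the coordinate $x+y\sqrt p$ of $\pi(\beta)$ fails to lie in $\O_{\Q(\sqrt p)}$, and this happens exactly when the $\sqrt r$-coordinate $w$ of $\beta$ is a nonzero half-integer. In all integral subcases the previous paragraph closes the argument. In the half-integral subcase the strategy is to round the offending coordinate to a neighbouring integer, producing $\mu\in\O_{\Q(\sqrt p)}$ together with $\alpha-\mu\in\O_{\Q(\sqrt p)}$, and to show that one of the two admissible roundings keeps both $\mu$ and $\alpha-\mu$ totally positive; this again yields $0\prec\mu\prec\alpha$ in $\Q(\sqrt p)$ and the desired contradiction. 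Here is exactly where the hypothesis $p<r$ (respectively $p<q<r$ in Case (4)) enters: a nonzero half-integral $w$ forces $|w|\ge\tfrac12$, so \eqref{eq:biqua_inequ} applied to the totally positive elements $\beta$ and $\alpha-\beta$ gives the slack $x>\tfrac12\sqrt r>\tfrac12\sqrt p$ on both, which is the room needed to absorb the $\tfrac12\sqrt p$ perturbation of the $\sqrt p$-coordinate caused by rounding.

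I expect the rounding step to be the crux. Preserving total positivity amounts to finding an integer in the open interval of admissible $\sqrt p$-coordinates cut out by the two positivity conditions, an interval that is centrally symmetric about the $\sqrt p$-coordinate of $\alpha/2$ and that already contains the half-integer produced by $\pi$. The estimate can only be tight when this interval is short, a configuration governed by the size of the conjugate of $\alpha$; excluding it requires combining the ordering hypothesis with the explicit shape of indecomposable integers in the quadratic subfield. This bookkeeping, carried out separately for $\Q(\sqrt p)$ and, with the roles of $\sigma_3$ and $\sigma_2$ interchanged, for $\Q(\sqrt q)$, where in several basis cases the projection is automatically integral and no rounding is needed, is the technical heart of the proof.
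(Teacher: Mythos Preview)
This theorem is not proved in the present paper: it is quoted from \cite{Man} and used as a black box (see the attribution in the theorem header), so there is no in-paper proof to compare your proposal against.

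On the substance of your sketch: the projection $\pi=\tfrac12(\mathrm{id}+\sigma_3)$ onto $\Q(\sqrt p)$, the verification that $\pi$ preserves total positivity, and the identification of the half-integral $\sqrt p$-coefficient as the sole obstruction to integrality are all correct and constitute the natural opening. The rounding step, however, is left as a plan and as written does not close. The difficulty is that the slack you extract from \eqref{eq:biqua_inequ}, namely $x>\tfrac12\sqrt r$, bounds the \emph{rational} coordinate of $\beta$, whereas what you need is slack in each of the two embeddings of $\pi(\beta)$ separately, i.e.\ that $a\pm y\sqrt p>\tfrac12\sqrt p$. Pairing the four positivity inequalities for $\beta$ gives only $a+y\sqrt p>|c\sqrt q+\tfrac{d}{2}\sqrt r|$ and $a-y\sqrt p>|c\sqrt q-\tfrac{d}{2}\sqrt r|$; when $c\neq 0$ the second right-hand side can be arbitrarily small, so only \emph{one} embedding of $\pi(\beta)$ is guaranteed to exceed $\tfrac12\sqrt p$, forcing a specific rounding direction for $\mu$. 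The symmetric analysis for $\gamma=\alpha-\beta$ may force the \emph{opposite} direction for $\alpha-\mu$, and you give no mechanism to resolve this conflict. Your closing remark that one may need ``the explicit shape of indecomposable integers in the quadratic subfield'' is an acknowledgement that the argument is incomplete rather than a resolution; in particular, nothing in the proposal explains how the structure of indecomposables would supply the missing inequality.
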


In this section, we will prove Theorem \ref{thm:main1} from the introduction. For that, we use quadratic forms additively indecomposable in quadratic subfields. In particular, we will benefit from the following results.

\begin{prop}[{\cite[Proposition 5.2]{TY}}] \label{prop:exi3}
Let $D\equiv 3\;(\textup{mod }4)$. Then the quadratic form $Q(x,y)=2x^2+2\sqrt{D}xy+\frac{D+1}{2}y^2$ of determinant $\det(Q)=1$ is additively indecomposable over $\mathcal{O}_{\Q(\sqrt{D})}$.
\end{prop}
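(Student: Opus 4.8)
The plan is to argue by contradiction, reducing everything to a short case analysis controlled by the trace. First I would record the elementary facts. The Gram matrix of $Q$ is $\begin{pmatrix} 2 & \sqrt{D} \\ \sqrt{D} & \frac{D+1}{2}\end{pmatrix}$, whose determinant is $2\cdot\frac{D+1}{2}-D=1$; since its leading coefficient $2$ is positive and the same holds for the Galois conjugate form $2x^2-2\sqrt{D}xy+\frac{D+1}{2}y^2$ (also of determinant $1$), $Q$ is totally positive definite. Moreover $D\equiv 3\ (\mathrm{mod}\ 4)$ gives $\mathcal{O}_{\Q(\sqrt{D})}=\Z[\sqrt{D}]$, and (in accordance with the convention of this paper) it suffices to rule out decompositions into \emph{classical} forms, so that all off-diagonal coefficients that appear are of the form $2\beta$ with $\beta\in\Z[\sqrt{D}]$.

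Suppose then $Q=Q_1+Q_2$ with $Q_i=a_i x^2+2\beta_i xy+c_i y^2$ classical and totally positive semi-definite, $Q_1,Q_2\neq 0$, where $a_i,\beta_i,c_i\in\Z[\sqrt{D}]$, $a_1+a_2=2$, $\beta_1+\beta_2=\sqrt{D}$, $c_1+c_2=\frac{D+1}{2}$. Each $a_i$ is totally nonnegative, and a direct argument (write $a_i=u+v\sqrt{D}$; total positivity forces $u>|v|\sqrt{D}$) shows that a nonzero totally positive element of $\Z[\sqrt{D}]$ has trace at least $2$, with equality only for $1$. Since $\mathrm{Tr}(a_1)+\mathrm{Tr}(a_2)=\mathrm{Tr}(2)=4$ with both traces nonnegative even integers, the only possibilities are $(a_1,a_2)\in\{(2,0),(0,2),(1,1)\}$, and $(2,0)$ forces $a_1=2$, $a_2=0$, while $(1,1)$ forces $a_1=a_2=1$.

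I would then dispatch the two cases. If $a_2=0$ (the case $a_1=0$ being symmetric), positive semi-definiteness of $Q_2=2\beta_2 xy+c_2 y^2$ forces $\beta_2=0$ and $c_2\succeq 0$; hence $\beta_1=\sqrt{D}$, and the determinant condition for $Q_1=2x^2+2\sqrt{D}xy+c_1y^2$ reads $2c_1-D\succeq 0$, which after $c_1=\frac{D+1}{2}-c_2$ becomes $1-2c_2\succeq 0$. Together with $c_2\succeq 0$ this sandwiches $0\preceq 2c_2\preceq 1$; a trace count then forces $c_2\in\Z$ and $c_2=0$, so $Q_2=0$, a contradiction. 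In the case $a_1=a_2=1$, positive semi-definiteness gives $c_i\succeq\beta_i^2$, so $\frac{D+1}{2}=c_1+c_2\succeq\beta_1^2+\beta_2^2$. Writing $\beta_1=u+v\sqrt{D}$ with $u,v\in\Z$ and $\beta_2=-u+(1-v)\sqrt{D}$, one expands $\beta_1^2+\beta_2^2=\bigl(2u^2+(v^2+(1-v)^2)D\bigr)+2u(2v-1)\sqrt{D}$ and compares rational parts: $\frac{D+1}{2}\geq 2u^2+(v^2+(1-v)^2)D$. Since $v^2+(1-v)^2\geq 1$ for all $v\in\Z$ with equality only for $v\in\{0,1\}$, the values $v\notin\{0,1\}$ yield $\frac{D+1}{2}\geq 5D$, impossible, whereas $v\in\{0,1\}$ yields $2u^2\leq\frac{1-D}{2}<0$, also impossible. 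This exhausts all cases, so $Q$ is additively indecomposable.

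The main obstacle I anticipate is purely organizational rather than deep: one must treat the degenerate semi-definite forms (those with a vanishing diagonal coefficient) correctly, the subtle point being that $ax^2+2\beta xy+cy^2$ with $a=0$ is positive semi-definite only when $\beta=0$ too, and one must consistently and carefully translate the order $\succeq$ into integer inequalities on the coordinates with respect to the basis $\{1,\sqrt{D}\}$, which is exactly where the trace bookkeeping carries the argument.
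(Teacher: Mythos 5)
Your proof is correct, and it follows essentially the same route the paper uses for its analogous statements (the paper only cites \cite[Proposition 5.2]{TY} for this one, but proves Proposition \ref{prop:ex_p_123} by the same scheme): classify the decompositions of $2$ as $0+2$ or $1+1$, kill the degenerate case by forcing $\beta_2=0$ and $c_2=0$, and in the $1+1$ case derive a contradiction from the nonnegativity of the trace of $\det(Q_1)+\det(Q_2)$, which is exactly your comparison of rational parts in $\frac{D+1}{2}\succeq\beta_1^2+\beta_2^2$. The trace bookkeeping and the handling of the semi-definite form with vanishing diagonal coefficient are both done correctly.
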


\begin{prop}[{\cite[Proposition 5.3]{TY}}] \label{prop:exi2}
Let $D\equiv 2\;(\textup{mod }4)$. Then the quadratic form \[Q(x,y)=2x^2+2(1+\sqrt{D})xy+\left(\frac{D}{2}+1+\sqrt{D}\right)y^2\] of determinant $\det(Q)=1$ is additively indecomposable over $\mathcal{O}_{\Q(\sqrt{D})}$.
\end{prop}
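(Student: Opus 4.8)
The plan is to argue directly from the Gram matrix rather than through Proposition \ref{prop:inde_from_inde}. That proposition is the natural tool, but it does \emph{not} apply here: the first diagonal coefficient $2=1+1$ is decomposable (both summands being totally positive), so $2$ is not an indecomposable integer. Realizing that the clean ``indecomposable diagonal'' route is unavailable is what forces a hands-on argument, and it is the conceptual starting point. First I would record the Gram matrix
\[
M_Q=\begin{pmatrix} 2 & 1+\sqrt{D}\\ 1+\sqrt{D} & \tfrac{D}{2}+1+\sqrt{D}\end{pmatrix},
\]
noting that $\tfrac{D}{2}\in\Z$ since $D\equiv 2\pmod 4$, so all entries lie in $\O_K=\Z[\sqrt{D}]$. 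A direct expansion gives $\det M_Q=1$, and both conjugate real matrices $\sigma_1(M_Q),\sigma_2(M_Q)$ are positive definite (each has determinant $1$ and upper-left entry $2>0$), so $Q$ is totally positive definite. I would also record the identity $2\gamma-(1+\sqrt{D})^2=1$ with $\gamma=\tfrac{D}{2}+1+\sqrt{D}$, which streamlines the later determinant computations.

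Next, suppose for contradiction that $Q=Q_1+Q_2$ with $Q_1,Q_2$ nonzero, classical, and totally positive semi-definite, and write $M_Q=M_1+M_2$ for the corresponding Gram matrices; these have entries in $\O_K$ and are positive semi-definite under both embeddings. The decisive first step is to force one summand to be singular. Applying the Minkowski determinant inequality $\det(A+B)^{1/2}\ge\det(A)^{1/2}+\det(B)^{1/2}$ to the real positive semi-definite matrices $A=\sigma_j(M_1),\,B=\sigma_j(M_2)$ for $j=1,2$, and using $\det M_Q=1$, yields
\[
1\ge \sqrt{\sigma_j(\det M_1)}+\sqrt{\sigma_j(\det M_2)}\qquad(j=1,2).
\]
If both $\det M_1$ and $\det M_2$ were nonzero they would be totally positive integers of norm at least $1$; multiplying the two inequalities and discarding the (nonnegative) cross terms gives $1\ge\sqrt{N_{K/\Q}(\det M_1)}+\sqrt{N_{K/\Q}(\det M_2)}\ge 2$, a contradiction. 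Hence, after relabelling, $\det M_1=0$, i.e.\ $M_1$ has rank one.

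Finally I would analyze this rank-one summand $M_1=\begin{pmatrix} a & b\\ b & c\end{pmatrix}$. Total positivity of both $a$ and $2-a$ forces $a\in\{0,1,2\}$: their conjugates sum to $2$, which kills the $\sqrt{D}$-part and leaves a rational integer in $[0,2]$. The cases $a=0$ and $a=2$ fall quickly. When $a=0$, rank one forces $b=0$, so $M_1=\mathrm{diag}(0,c)$ with $c$ totally positive, and then $\det M_2=1-2c$; its total nonnegativity gives $\sigma_j(c)\le\tfrac12$, hence $N_{K/\Q}(c)<1$, impossible. When $a=2$, the entry $(M_2)_{11}=0$ forces $(M_2)_{12}=1+\sqrt{D}-b=0$, so $b=1+\sqrt{D}$ and $c=(1+\sqrt{D})^2/2\notin\O_K$ because $\tfrac{D+1}{2}\notin\Z$. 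The remaining case $a=1$, where $M_1=\begin{pmatrix}1 & b\\ b & b^2\end{pmatrix}$ is automatically semi-definite, is the crux: I would impose total nonnegativity of $\det M_2=-\tfrac{D}{2}-\sqrt{D}-2b^2+2(1+\sqrt{D})b$ at both embeddings. Each conjugate is a downward parabola in $\sigma_j(b)$ with maximal value $\tfrac12$, so its nonnegativity traps $\sigma_1(b)\in[\tfrac{\sqrt{D}}{2},\tfrac{\sqrt{D}}{2}+1]$ and $\sigma_2(b)\in[-\tfrac{\sqrt{D}}{2},-\tfrac{\sqrt{D}}{2}+1]$; writing $b=b_1+b_2\sqrt{D}$, this pins $b_2$ into the open interval $(\tfrac12-\tfrac1{2\sqrt{D}},\tfrac12+\tfrac1{2\sqrt{D}})\subset(0,1)$, which contains no integer. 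This contradiction completes the proof.

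I expect the case $a=1$ to be the main obstacle, since it is the only configuration not immediately excluded by an integrality or norm obstruction; it instead requires the quantitative interval-trapping of both conjugates of $b$ coming from the positivity of $\det M_2$ at each embedding. The Minkowski step and the preliminary total-positive-definiteness check are routine, so essentially all the real content sits in that final quadratic estimate.
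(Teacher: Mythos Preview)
Your proof is correct and follows essentially the same template as the argument in \cite{TY} (which this paper only cites; the biquadratic extension in Proposition~\ref{prop:ex_p_123} reproduces that template). Both arguments pin the top-left entry to $\{0,1,2\}$, use the determinant inequality $\det Q\succeq\det Q_1+\det Q_2$ (your Minkowski step is the same thing) to force one summand singular, and then eliminate each case. The only cosmetic difference is in the case $a=1$: the source simply computes $\tfrac12\mathrm{Tr}_{K/\Q}(\det M_2)=-\tfrac{D}{2}+2b_1(1-b_1)+2Db_2(1-b_2)\le-\tfrac{D}{2}<0$ (since $n(1-n)\le 0$ for $n\in\Z$), whereas you trap $b_2$ in an interval via both conjugates---equally valid but a line longer.

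Two trivial remarks. First, the interval you obtain for $b_2$ is closed, not open; it still lies inside $(0,1)$ since $D\ge 2$. Second, your verbal justification of $a\in\Z$ is a bit garbled: the clean reason is that $\sigma_j(a)\in[0,2]$ gives $2|a_2|\sqrt{D}=|\sigma_1(a)-\sigma_2(a)|\le 2$, whence $|a_2|<1$.
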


\begin{prop}[{\cite[Proposition 5.4]{TY}}] \label{prop:exi1}
Let $D>17$ be square-free. Then the following quadratic forms are additively indecomposable over $\mathcal{O}_{\Q(\sqrt{D})}$:
\begin{enumerate}
\item $Q(x,y)=3x^2+2(3+\sqrt{D})xy+\left(\frac{D+10}{3}+2\sqrt{D}\right)y^2$ with determinant $1$ for $D\equiv 5\;(\textup{mod }12)$,
\item $Q(x,y)=3x^2+2(3+\sqrt{D})xy+\left(\frac{D+11}{3}+2\sqrt{D}\right)y^2$ with determinant $2$ for $D\equiv 1\;(\textup{mod }12)$,
\item $Q(x,y)=4x^2+2(2+\sqrt{D})xy+\left(\frac{D+7}{4}+\sqrt{D}\right)y^2$ with determinant $3$ for $D\equiv 9\;(\textup{mod }12)$. 
\end{enumerate}
\end{prop}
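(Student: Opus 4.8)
The plan is to argue directly from the definition of additive indecomposability, because Proposition \ref{prop:inde_from_inde} does \emph{not} apply to these forms: their leading coefficients $3$ and $4$ are decomposable integers (in fact $1$ is the only indecomposable rational integer, since $n=1+(n-1)$ for $n\ge 2$), so one cannot simply invoke indecomposability of $\alpha$ and $\gamma$. First I would record the elementary facts common to all three cases: each has $D\equiv 1\pmod 4$, so $\O_K=\Z\big[\tfrac{1+\sqrt D}{2}\big]$; the congruence of $D$ modulo $12$ guarantees $\tfrac{D+10}{3},\tfrac{D+11}{3},\tfrac{D+7}{4}\in\Z$ and that all coefficients lie in $\O_K$; and each $Q$ is totally positive definite because its leading coefficient is positive and $\det(Q)\in\{1,2,3\}>0$ at both embeddings.

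The engine of the proof is the polarization identity for Gram matrices. Suppose, for contradiction, $Q=Q_1+Q_2$ with classical totally positive semi-definite forms $Q_i=a_ix^2+2b_i'xy+c_iy^2\neq 0$. Then $M_Q=M_{Q_1}+M_{Q_2}$ and
\[
\det(Q)=\det(Q_1)+\det(Q_2)+P,\qquad P:=a_1c_2+a_2c_1-2b_1'b_2'.
\]
A direct computation (or the identity $P=\langle M_{Q_1},\operatorname{adj}M_{Q_2}\rangle$ together with positive semi-definiteness of the adjugate of a totally positive semi-definite $2\times 2$ matrix) shows that all three summands $\det(Q_1),\det(Q_2),P$ are totally nonnegative algebraic integers, integrality coming from classicality.

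The next step is a size reduction exploiting $D>17$, hence $\sqrt D>4\ge\det(Q),\alpha$. Any totally nonnegative $\xi\in\O_K$ with $\xi\preceq m$ for a rational integer $m<\sqrt D$ must be an ordinary integer in $\{0,\dots,m\}$: writing $\xi=\tfrac{u+v\sqrt D}{2}$, both embeddings lie in $[0,m]$, so $|v|\sqrt D=|\sigma_1(\xi)-\sigma_2(\xi)|\le m<\sqrt D$ forces $v=0$. Applying this to $\det(Q_1),\det(Q_2),P$ (each $\preceq\det(Q)$) and to $a_1,a_2$ (each $\preceq\alpha$) shows these are nonnegative integers with $\det(Q_1)+\det(Q_2)+P=\det(Q)$ and $a_1+a_2=\alpha$, leaving only finitely many configurations. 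The boundary cases $a_i=0$ are disposed of first: say $a_2=0$; total positive semi-definiteness of $Q_2$ forces $b_2'=0$, so $Q_2=c_2y^2$ with $c_2\succ 0$, and $\det(Q_1)=\det(Q)-\alpha c_2\succeq 0$ gives $\alpha c_2\preceq\det(Q)$, whence $N_{K/\Q}(c_2)\le(\det(Q)/\alpha)^2<1$, contradicting $N_{K/\Q}(c_2)\ge 1$.

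For the remaining configurations with $a_1,a_2\ge 1$, I would substitute $c_i=((b_i')^2+n_i)/a_i$, where $n_i:=\det(Q_i)$, into $P$ to obtain
\[
P=\frac{(a_1b_2'-a_2b_1')^2+a_1^2n_2+a_2^2n_1}{a_1a_2}.
\]
Running over the finitely many integer triples $(n_1,n_2,P)$ summing to $\det(Q)$ and the finitely many $(a_1,a_2)$, each case collapses to an equation $(a_1b_2'-a_2b_1')^2=c$ for an explicit nonnegative integer $c$, and one derives a contradiction in one of three ways: the forced $c$ makes the right-hand side exceed the left at a real embedding (i.e. the analogous quantity is negative); $c$ is a positive non-square, so $a_1b_2'-a_2b_1'=\pm\sqrt c\notin\O_K$; or $c$ is a square which, combined with $b_1'+b_2'=\beta/2$, forces a value of $b_1'$ with denominator $3$ (respectively one violating the parity defining $\Z[\tfrac{1+\sqrt D}{2}]$), so that $b_1'\notin\O_K$. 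I expect the main obstacle to be exactly this last bookkeeping: one must check, using the specific constants in $\beta$ and the residue of $D$ modulo $12$, that \emph{no} admissible configuration yields an integral pair $(b_1',b_2')$. The determinant $3$ case, part (3), is the most delicate since it admits the largest number of triples $(n_1,n_2,P)$, but all of them are eliminated by the same three obstructions.
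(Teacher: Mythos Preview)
The paper does not prove this proposition itself; it is quoted from \cite[Proposition~5.4]{TY} and then used as a black box in the biquadratic arguments of Propositions~\ref{prop:formq_23_1} and~\ref{prop:111}. So there is no in-paper proof to compare against, though it is worth noting that the trace estimates on $\text{Tr}_{K/\Q}(\det(Q_1)+\det(Q_2))$ used in those later proofs are essentially a coarsened form of your identity $\det(Q)=\det(Q_1)+\det(Q_2)+P$, exploiting only $P\succeq 0$ rather than tracking $P$ exactly.

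Your reduction to finitely many integer configurations $(a_1,a_2,n_1,n_2,P)$ is correct, but the trichotomy of obstructions you list is not exhaustive. In part~(3) take $(a_1,a_2)=(2,2)$ and $(n_1,n_2,P)\in\{(0,1,2),(1,0,2)\}$: then
\[
c=a_1a_2P-a_1^2n_2-a_2^2n_1=8-4=4,
\]
so $2(b_2'-b_1')=\pm 2$, and combined with $b_1'+b_2'=2+\sqrt D$ this yields $\{b_1',b_2'\}=\bigl\{\tfrac{1+\sqrt D}{2},\,\tfrac{3+\sqrt D}{2}\bigr\}\subset\O_K$. None of your three obstructions (negative $c$, non-square $c$, $b_1'\notin\O_K$) fires here. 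What actually eliminates this configuration is the \emph{next} integrality constraint: solving $c_i=\bigl((b_i')^2+n_i\bigr)/2$ gives $\sqrt D$-coefficient $\tfrac14$ (respectively $\tfrac34$), so $c_i\notin\O_K$. You need to add ``$c_i\notin\O_K$'' as a fourth obstruction and verify it in this one surviving case; with that amendment the case analysis closes.
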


In our proofs, we will need the following lemma, where by the decomposition of an element $\alpha\in\mathcal{O}_K$ we mean $\alpha=\beta +\gamma$ for some $\beta, \gamma\in\mathcal{O}_K\cup \{0\}$ :

\begin{lemma} \label{lem:decomp_biqua}
Let $K$ be a real biquadratic field. 
\begin{enumerate}
\item The number $2$ can be decomposed only as $1+1$ or $0+2$.
\item The number $3$ can be decomposed only as $1 + 2$, $0 + 3$ and $\frac{3+\sqrt{5}}{2}+\frac{3-\sqrt{5}}{2}$.
\item Up to embeddings in the corresponding number fields, the number $4$ can be decomposed only as $2+2$, $1+3$, $0+4$, $(2+\sqrt{2})+(2-\sqrt{2})$, $(2+\sqrt{3})+(2-\sqrt{3})$, $\frac{3+\sqrt{5}}{2}+\frac{5-\sqrt{5}}{2}$ and
\[
\left(2+\frac{\sqrt{2}+\sqrt{6}}{2}\right)+\left(2-\frac{\sqrt{2}+\sqrt{6}}{2}\right).
\]  
\end{enumerate}
\end{lemma}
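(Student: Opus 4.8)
The plan is to treat the three parts simultaneously. Since the decomposition $0+n$ is always available, it suffices to classify the ways of writing $n=\alpha+\beta$ with $\alpha,\beta\in\O_K^{+}$ both nonzero. The key elementary input is the following. If $\gamma\in\O_K^{+}$, then $N_{K/\Q}(\gamma)$ is a positive rational integer, so $N_{K/\Q}(\gamma)\geq 1$, and the AM--GM inequality applied to the four conjugates $\sigma_i(\gamma)$ gives
\[
\frac{\text{Tr}_{K/\Q}(\gamma)}{4}\;\geq\;N_{K/\Q}(\gamma)^{1/4}\;\geq\;1,
\]
whence $\text{Tr}_{K/\Q}(\gamma)\geq 4$, with equality forcing all $\sigma_i(\gamma)$ to coincide, i.e.\ $\gamma=1$. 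The same computation inside a real quadratic subfield $\Q(\sqrt D)$ shows that a totally positive $\gamma\in\O_{\Q(\sqrt D)}$ with $\gamma\notin\Q$ satisfies $\text{Tr}_{\Q(\sqrt D)/\Q}(\gamma)\geq 3$. Applying the first bound to $\alpha$ and to $\beta=n-\alpha$ and using $\text{Tr}_{K/\Q}(\alpha)+\text{Tr}_{K/\Q}(\beta)=4n$, the unordered pair of traces is forced into a short list: $(4,4)$ for $n=2$; one of $(4,8)$, $(5,7)$, $(6,6)$ for $n=3$; one of $(4,12)$, $(5,11)$, $(6,10)$, $(7,9)$, $(8,8)$ for $n=4$. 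A summand of trace $4$ must equal $1$, which settles the pairs containing a $4$.

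For the remaining pairs I would split according to $\deg_{\Q}\alpha\in\{1,2,4\}$. If $\deg_{\Q}\alpha=1$, then $\alpha\in\Z$ and one obtains exactly the rational decompositions $a+(n-a)$ with $1\leq a\leq n-1$. If $\deg_{\Q}\alpha=2$, then $\alpha$ lies in one of the quadratic subfields $\Q(\sqrt D)$, $D\in\{p,q,r\}$; since the four embeddings of $K$ restrict to the two embeddings of $\Q(\sqrt D)$, each exactly twice, both $\alpha$ and $n-\alpha$ are already totally positive in $\O_{\Q(\sqrt D)}$, so a short case check based on $\text{Tr}_{\Q(\sqrt D)/\Q}(\alpha)\geq 3$ and $N_{\Q(\sqrt D)/\Q}(\alpha)\geq 1$ applies. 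It leaves only $D=5$ (yielding $\tfrac{3+\sqrt5}{2}+\tfrac{3-\sqrt5}{2}$ for $n=3$ and $\tfrac{3+\sqrt5}{2}+\tfrac{5-\sqrt5}{2}$ for $n=4$) and $D\in\{2,3\}$ (yielding $(2+\sqrt2)+(2-\sqrt2)$ and $(2+\sqrt3)+(2-\sqrt3)$ for $n=4$), and only when the relevant square root actually lies in $K$. Together with the rational decompositions, this exhausts the list in (1) and in (2), and all of (3) except its last entry.

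It remains to treat $\deg_{\Q}\alpha=4$, i.e.\ $\Q(\alpha)=K$; here the four conjugates $\sigma_i(\alpha)$ are distinct and lie in $(0,n)$, and the same holds for $n-\alpha$. Writing $\alpha=x+y\sqrt p+z\sqrt q+w\sqrt r$, the rational coordinate equals $x=\text{Tr}_{K/\Q}(\alpha)/4$, and applying \eqref{eq:biqua_inequ} both to $\alpha$ and to $n-\alpha$ (the latter first rewritten in the basis $1,\sqrt p,\sqrt q,\sqrt r$) confines $|y|\sqrt p$, $|z|\sqrt q$, $|w|\sqrt r$ to an explicit bounded range. Expanding $\alpha$ in the integral basis appropriate to the case (1)--(4) then reduces the question to a finite search over the points of $\O_K$ inside the box $\{\,0<\sigma_i(\alpha)<n\ :\ 1\leq i\leq 4\,\}$. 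Carrying this out, one finds that no quartic element survives for $n=2$ or $n=3$, while for $n=4$ the only surviving configuration is $p=2$, $q=3$, $r=6$ with $\alpha=2+\tfrac{\sqrt2+\sqrt6}{2}$, which produces the last decomposition in (3). This degree-$4$ analysis is where I expect the genuine work: it must be run through all four shapes of integral basis, and the trace values absent from the conclusion (chiefly $5$ and $7$, together with the quartic possibility of trace $6$) cannot be ruled out by the crude AM--GM bound alone, so one must invoke the sharper inequality \eqref{eq:biqua_inequ} --- together with the fact that forcing the $\sqrt r$-coefficient to vanish already lowers the degree of $\alpha$ --- to exclude them. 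Everything involved is elementary and finite, but this bookkeeping is the actual content of the proof.
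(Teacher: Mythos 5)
Your strategy is sound and considerably more self-contained than the paper's own proof, which simply defers all three parts to \cite[Lemma 4.3]{KTZ}. The AM--GM trace bound, the resulting short list of trace pairs, and the split by $\deg_{\Q}\alpha$ are all correct. In particular your argument completely settles part (1) (both traces must equal $4$, forcing both summands to be $1$), and the quadratic-subfield analysis correctly produces every irrational decomposition in (2) and (3) except the last one in (3). One observation you use implicitly and should make explicit, since it does real work: a summand of degree $\leq 2$ over $\Q$ has $\operatorname{Tr}_{K/\Q}$ even (divisible by $4$ if rational), so the odd trace pairs $(4,8)$ aside, namely $(5,7)$, $(5,11)$, $(7,9)$, can only arise from quartic summands.

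The gap is precisely the quartic case, which you assert rather than prove ("carrying this out, one finds\dots"). That case is not a formality: it is where the exceptional decomposition $\bigl(2+\frac{\sqrt2+\sqrt6}{2}\bigr)+\bigl(2-\frac{\sqrt2+\sqrt6}{2}\bigr)$ comes from, and it is the only place where the trace pairs $(5,7)$ and $(6,6)$ for $n=3$, and $(5,11)$, $(6,10)$, $(7,9)$ and the quartic instances of $(8,8)$ for $n=4$, get eliminated or classified; without it, parts (2) and (3) are unproved. Moreover, one of the organizing claims you invoke there is false as stated: a quartic element need not have nonzero $\sqrt r$-coefficient (e.g.\ $x+y\sqrt p+z\sqrt q$ with $y,z\neq 0$ already has degree $4$), so "forcing the $\sqrt r$-coefficient to vanish lowers the degree" cannot be used to discard that branch. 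What actually closes it is the interaction of \eqref{eq:biqua_inequ} with the integrality constraints: in each of the bases (1)--(4), vanishing of the $\sqrt r$-coordinate tightens the denominators allowed for the $\sqrt p$- and $\sqrt q$-coordinates, and then $x>|y|\sqrt p,\,|z|\sqrt q$ with $x\leq n-1\leq 3$ leaves only finitely many candidate fields and elements. That finite but four-case-times-several-traces check is the genuine content of the lemma, and it still needs to be written out (or, as the authors do, outsourced to \cite{KTZ}).
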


\begin{proof}
Parts (1) and (2) are, in fact, stated in \cite[Lemma 4.3(2)--(3)]{KTZ}, (3) follows easily from \cite[Lemma 4.3(4)]{KTZ}.
\end{proof} 

\subsection{Counterexample} Before we start with the proof of the existence of an additively indecomposable binary quadratic form in biquadratic fields, we show a kind of the opposite situation to that in the following proofs. In particular, it is not always true that additively indecomposable quadratic forms from quadratic subfields do not decompose in the corresponding biquadratic field.

\begin{example}
Let us consider the biquadratic field $\Q(\sqrt{2},\sqrt{5})$, and let us take the element $7+2\sqrt{10}$, which is indecomposable in the quadratic subfield $\Q(\sqrt{10})$. From \cite[Example 4.6]{KTZ}, we know that in $\Q(\sqrt{2},\sqrt{5})$, the element $7+2\sqrt{10}$ decomposes as
\[
7+2\sqrt{10}=\frac{7}{2}+\sqrt{2}+\frac{1}{2}\sqrt{5}+\sqrt{10}+\frac{7}{2}-\sqrt{2}-\frac{1}{2}\sqrt{5}+\sqrt{10}.
\]
We use this example to construct a quadratic form additively indecomposable in $\Q(\sqrt{10})$ but decomposable in $\Q(\sqrt{2},\sqrt{5})$. Let us take $Q(x,y)=(7+2\sqrt{10})x^2+2xy+(7-2\sqrt{10})y^2$, which is additively indecomposable in $\Q(\sqrt{10})$ by Proposition \ref{prop:inde_from_inde}. However, in $\Q(\sqrt{2},\sqrt{5})$, we can write it as $Q=Q_1+Q_2$ where
\[
Q_1(x,y)=\left(\frac{7}{2}+\sqrt{2}+\frac{1}{2}\sqrt{5}+\sqrt{10}\right)x^2+2\left(\frac{1}{2}-\frac{1}{2}\sqrt{5}\right)xy+\left(\frac{7}{2}-\sqrt{2}+\frac{1}{2}\sqrt{5}-\sqrt{10}\right)y^2
\] 
and 
\[
Q_2(x,y)=\left(\frac{7}{2}-\sqrt{2}-\frac{1}{2}\sqrt{5}+\sqrt{10}\right)x^2+2\left(\frac{1}{2}+\frac{1}{2}\sqrt{5}\right)xy+\left(\frac{7}{2}+\sqrt{2}-\frac{1}{2}\sqrt{5}-\sqrt{10}\right)y^2.
\]
Easily, $\det(Q_1)=\det(Q_2)=0$, so $Q_1$ and $Q_2$ are both totally positive semi-definite.   
\end{example}    

\subsection{Cases (1)--(3)} \label{subsec:cases1_3}

First, we will focus on cases with bases (1)--(3) and prove that in such fields, we can always find an additively indecomposable binary quadratic form. In the following proofs, we sometimes omit technical details; some of them can be found in \cite{Hla}.

\begin{prop} \label{prop:ex_p_123}
Let $p$ and $q$ be as in Cases (1)--(3). Then,
\begin{enumerate}
\item if $p\equiv 2\;(\textup{mod }4)$, then the quadratic form $2x^2+2(1+\sqrt{p})xy+\left(\frac{p}{2}+1+\sqrt{p}\right)y^2$ is additively indecomposable in $\Q(\sqrt{p},\sqrt{q})$,
\item if $p\equiv 3\;(\textup{mod }4)$, then the quadratic form $2x^2+2\sqrt{p}xy+\frac{p+1}{2}y^2$ is additively indecomposable in $\Q(\sqrt{p},\sqrt{q})$. 
\end{enumerate} 
\end{prop}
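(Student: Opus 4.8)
The plan is to reduce the statement to Proposition~\ref{prop:inde_from_inde}. In both cases the proposed binary form is, after a harmless sign change in the variable $y$, exactly the form produced by Proposition~\ref{prop:exi2} (case $p\equiv 2$) or Proposition~\ref{prop:exi3} (case $p\equiv 3$) over the quadratic field $\Q(\sqrt{p})$; in both of those the diagonal coefficients are indecomposable integers of $\Q(\sqrt{p})$ and the cross term is nonzero, with $\det(Q)=1$. So the only thing to verify is that these diagonal coefficients, namely $2$ and $\frac{p}{2}+1+\sqrt{p}$ in the first case and $2$ and $\frac{p+1}{2}$ in the second, remain indecomposable when we pass to the biquadratic field $K=\Q(\sqrt{p},\sqrt{q})$. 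If they do, then Proposition~\ref{prop:inde_from_inde} applied over $K$ immediately gives that $Q$ is additively indecomposable in $K$ (note $Q$ is totally positive definite over $K$ since it is so over $\Q(\sqrt{p})$ and $K$ only adds embeddings that repeat the ones of $\Q(\sqrt{p})$).

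First I would handle the coefficient $2$. By Lemma~\ref{lem:decomp_biqua}(1), in any real biquadratic field $2$ decomposes only as $1+1$ or $0+2$, so $2$ is indecomposable in $K$ (the decomposition $1+1$ is into totally positive elements, so in fact $2$ is \emph{not} indecomposable — wait, $1$ and $1$ are both in $\O_K^+$, so $2=1+1$ shows $2$ is decomposable). Hence for the coefficient $2$ one cannot invoke indecomposability directly; instead the argument must be that $2$ is represented, and the point of Proposition~\ref{prop:inde_from_inde} as used here needs the weaker fact that $2$ has only the decompositions listed — so I would instead argue directly following the proof scheme of Proposition~\ref{prop:inde_from_inde}: if $Q=Q_1+Q_2$ with both totally positive semidefinite and nonzero, then looking at the $x^2$-coefficients, $2=\alpha_1^{(1)}+\alpha_1^{(2)}$, so by Lemma~\ref{lem:decomp_biqua}(1) either one of the two coefficients is $0$ and the other $2$, or both are $1$.

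The heart of the proof is then a case analysis on these possibilities together with the analogous analysis of the $y^2$-coefficient (which requires knowing the decompositions of $\frac{p}{2}+1+\sqrt{p}$, resp. $\frac{p+1}{2}$, in $K$), and the constraint that each $Q_i$, being totally positive semidefinite, must have nonnegative diagonal entries and, for each $2\times 2$ principal minor involving a zero diagonal entry, a zero off-diagonal entry. Concretely: if some $Q_i$ has its $x^2$-coefficient equal to $0$, then its $xy$-coefficient must be $0$ as well (else the principal submatrix $\left(\begin{smallmatrix}0 & \ast \\ \ast & \ast\end{smallmatrix}\right)$ is not semidefinite), forcing that $Q_i$ to be a pure $y^2$ form; symmetrically if the $y^2$-coefficient is $0$. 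Ruling out the remaining genuinely two-dimensional splittings (the "$1+1$ on the diagonal" case) is where the decomposition data for the $y^2$-coefficient in the biquadratic field is needed, and this is the step I expect to be the main obstacle — one has to show that none of the finitely many ways of writing $\frac{p}{2}+1+\sqrt{p}$ (resp.\ $\frac{p+1}{2}$) as a sum of two totally positive semidefinite quantities is compatible with the cross-term $2(1+\sqrt{p})$ (resp.\ $2\sqrt{p}$) splitting into pieces that keep both $2\times 2$ Gram matrices positive semidefinite. I would expect to use the determinant computation $\det(Q_i)\ge 0$ together with $\det(Q)=\det(Q_1)+\det(Q_2)+(\text{mixed term})$ and the trace/norm monotonicity under $\succeq$ from the Preliminaries to cut the case list down; the technical verifications of the particular numerical decompositions are the kind of routine-but-tedious computation that the authors say they sometimes omit, with details in \cite{Hla}.
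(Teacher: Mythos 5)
Your proposal correctly identifies the overall skeleton --- split on the $x^2$-coefficient using Lemma~\ref{lem:decomp_biqua}(1) into the $0+2$ and $1+1$ cases, and dispatch the $0+2$ case by noting that a semidefinite binary form with a zero diagonal entry must be a pure $y^2$-form (the paper does exactly this, then observes $\det(Q_1)=1-2\gamma$ cannot be $\succeq 0$ since $\frac14\mathrm{Tr}_{K/\Q}(\gamma)\geq 1$). You also rightly abandon the initial reduction to Proposition~\ref{prop:inde_from_inde}: not only is $2=1+1$ decomposable, but for the same reason that proposition was never the mechanism behind Propositions~\ref{prop:exi2} and \ref{prop:exi3} in the quadratic case either.

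The genuine gap is the $1+1$ case, which is the entire content of the proof and which you leave as a sketch. Moreover, the plan you sketch --- classify the finitely many decompositions of the $y^2$-coefficient $\frac{p}{2}+1+\sqrt{p}$ in $K$ and check each against the cross-term --- is not the paper's route and is not obviously workable: unlike $2$, $3$, $4$, this element is large and there is no available finite classification of its decompositions. The paper instead parametrizes $Q_1=x^2+2(1+\sqrt{p}-\beta)xy+(\frac{p}{2}+1+\sqrt{p}-\gamma)y^2$ and $Q_2=x^2+2\beta xy+\gamma y^2$ with $\beta,\gamma\in\O_K$ free, uses $\det(Q)=1\succeq\det(Q_1)+\det(Q_2)$ with both summands $\succeq 0$ to force (say) $\det(Q_2)=0$, hence $\gamma=\beta^2$, and then computes $\det(Q_1)=-\frac{p}{2}-\sqrt{p}-2\beta^2+2\beta+2\beta\sqrt{p}$ explicitly. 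Writing $\beta=\frac{b_1}{2}+\frac{b_2}{2}\sqrt{p}+\frac{b_3}{2}\sqrt{q}+\frac{b_4}{2}\sqrt{r}$ in integral-basis coordinates, one shows $\frac14\mathrm{Tr}_{K/\Q}(\det(Q_1))=-\frac{p}{2}+b_1-\frac{b_1^2}{2}+(b_2-\frac{b_2^2}{2})p-\frac{b_3^2}{2}p-\frac{b_4^2}{2}r<0$ in every case; the only delicate subcase is $b_1=b_2=1$, where the shape of the integral bases in Cases (1)--(3) forces $b_4$ to be odd, hence nonzero, and the $-\frac{b_4^2}{2}r$ term saves the inequality. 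This explicit trace computation (and the parity observation on $b_4$) is the missing idea; your appeal to ``routine-but-tedious'' verifications does not substitute for it, since without fixing the correct quantity to estimate the case does not close.
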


\begin{proof}
We will show the proof only for the first part; the proof of the second part is similar and mostly uses techniques introduced in \cite[Proof of Proposition 5.2]{TY}, where the indecomposability of these forms in $\Q(\sqrt{p})$ is proved, and, moreover, can be found in \cite{Hla}.

From Lemma \ref{lem:decomp_biqua}, we know that $2$ decomposes only as $0+2$ or $1+1$ in $\Q(\sqrt{p},\sqrt{q})$. Thus, first, let us suppose that our quadratic form $Q$ can be expressed as $Q=Q_1+Q_2$ where
\begin{align*}
Q_1(x,y)&=x^2+2(1+\sqrt{p}-\beta)xy+\left(\frac{p}{2}+1+\sqrt{p}-\gamma\right)y^2,\\
Q_2(x,y)&=x^2+2\beta y^2+\gamma y^2
\end{align*} 
for some $\beta,\gamma\in\O_K$. Since $\det(Q)=1$, at least one of $\det(Q_1)$ and $\det(Q_2)$ must be zero, which follows from the inequality $\det(Q)\succeq \det(Q_1)+\det(Q_2)$. Let us say that $\det(Q_2)=0$. That gives $\gamma=\beta^2$. Then, we have
\[
\det(Q_1)=-\frac{p}{2}-\sqrt{p}-2\beta^2+2\beta+2\beta\sqrt{p}.
\] 
Put $\beta=\frac{b_1}{2}+\frac{b_2}{2}\sqrt{p}+\frac{b_3}{2}\sqrt{q}+\frac{b_4}{2}\sqrt{r}$ where $b_i\in\Z$ are such that $\beta\in\O_K$. Then
\[
\frac{1}{4}\text{Tr}_{K/\Q}(\det(Q_1))=-\frac{p}{2}+b_1-\frac{b_1^2}{2}+\left(b_2-\frac{b_2^2}{2}\right)p-\frac{b_3^2}{2}p-\frac{b_4^2}{2}r.
\]
Now, we will show that this expression is always negative. First, we see that $b_1-\frac{b_1^2}{2}>0$ only if $b_1=1$, and the same is true for $b_2-\frac{b_2^2}{2}$. Thus, if $b_1\neq 1$ and $b_2\neq 1$, then clearly $\text{Tr}_{K/\Q}(\det(Q_1))<0$, giving that $Q_1$ is not totally positive semi-definite. 

If $b_1=1$ and $b_2\neq 1$, then 
\[
-\frac{p}{2}+b_1-\frac{b_1^2}{2}+\left(b_2-\frac{b_2^2}{2}\right)p-\frac{b_3^2}{2}p-\frac{b_4^2}{2}r\leq -\frac{p}{2}+\frac{1}{2}<0
\] 
for $p\geq 2$.
If $b_1=1$ and $b_2=1$, then, considering forms of our integral bases, $b_4$ is odd and, thus, non-zero. We obtain
\[
-\frac{p}{2}+b_1-\frac{b_1^2}{2}+\left(b_2-\frac{b_2^2}{2}\right)p-\frac{b_3^2}{2}p-\frac{b_4^2}{2}r\leq -\frac{p}{2}+\frac{1}{2}+\frac{1}{2}p-\frac{1}{2}r=\frac{1}{2}-\frac{1}{2}r<0
\] 
for $r\geq 2$.

The second possible decomposition is of the type $Q=Q_3+Q_4$ where
\begin{align*}
Q_1(x,y)&=x^2+2(1+\sqrt{p})xy+\left(\frac{p}{2}+1+\sqrt{p}-\gamma\right)y^2,\\
Q_2(x,y)&=\gamma y^2
\end{align*} 
for some $\gamma\in\O_K^+$. In this case,
\[
\det(Q_1)=1-2\gamma.
\] 
Since $\frac{1}{4}\text{Tr}_{K/\Q}(\gamma)\geq 1$, we cannot have $\det(Q_1)\succeq 0$, which also excludes this case.  
\end{proof}

Note that in the previous proof, the roles of $p$ and $r$ are interchangeable. Therefore, we in fact get two additively indecomposable quadratic forms, one from $\Q(\sqrt{p})$ and one from $\Q(\sqrt{r})$. However, it would require further study to determine if these forms can be equivalent.

Using almost the same procedure, we can prove the following:

\begin{prop} \label{prop:ex_q_1}
Let $p\equiv 2\;(\textup{mod }4)$ and $q\equiv 3\;(\textup{mod }4)$. Then the quadratic form $2x^2+2\sqrt{q}xy+\frac{q+1}{2}y^2$ is additively indecomposable in $\Q(\sqrt{p},\sqrt{q})$.  
\end{prop}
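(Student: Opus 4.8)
The strategy mirrors the proof of Proposition~\ref{prop:ex_p_123}(2) almost verbatim, since $q\equiv 3\pmod 4$ puts us in exactly the situation where $2x^2+2\sqrt{q}xy+\frac{q+1}{2}y^2$ is the form of determinant $1$ from Proposition~\ref{prop:exi3}, already known to be additively indecomposable over $\O_{\Q(\sqrt q)}$. The new content is only that the decomposition cannot ``escape'' into the larger field $\Q(\sqrt p,\sqrt q)$. First I would invoke Lemma~\ref{lem:decomp_biqua}(1): the leading coefficient $2$ can only split as $0+2$ or $1+1$ in any real biquadratic field, so any nontrivial classical decomposition $Q=Q_1+Q_2$ falls into one of two shapes.

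\textbf{The $1+1$ case.} Write
\begin{align*}
Q_1(x,y)&=x^2+2(\sqrt{q}-\beta)xy+\left(\tfrac{q+1}{2}-\gamma\right)y^2,\\
Q_2(x,y)&=x^2+2\beta xy+\gamma y^2,
\end{align*}
with $\beta,\gamma\in\O_K$. Since $\det(Q)=1$ and $\det(Q)\succeq\det(Q_1)+\det(Q_2)$ with both summands totally nonnegative, one of them is $0$; say $\det(Q_2)=0$, forcing $\gamma=\beta^2$. Then $\det(Q_1)=\tfrac{q+1}{2}-\beta^2-\gamma-(\sqrt q-\beta)^2\cdot(-1)$ — more precisely I would just compute $\det(Q_1)$ directly as a polynomial in $\beta$ and $q$, getting an expression of the form $-\tfrac{q-1}{2}+2\beta\sqrt q-2\beta^2$. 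Writing $\beta=\tfrac{b_1}{2}+\tfrac{b_2}{2}\sqrt q+\tfrac{b_3}{2}\sqrt p+\tfrac{b_4}{2}\sqrt r$ with $b_i\in\Z$ (subject to the integrality constraints from the integral basis of Case~(1), where $p\equiv2$, $q\equiv3$, so $\O_K=\Z[1,\sqrt p,\sqrt q,\frac{\sqrt p+\sqrt r}{2}]$), I would take $\tfrac14\operatorname{Tr}_{K/\Q}(\det(Q_1))$ and show it is always negative. The trace kills the cross terms and leaves something like $-\tfrac{q-1}{2}-\tfrac{b_2^2}{2}q+b_2 q-\tfrac{b_1^2}{2}-\tfrac{b_3^2}{2}p-\tfrac{b_4^2}{2}r$ (I'd recompute the exact constant). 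Then the same case analysis as in Proposition~\ref{prop:ex_p_123} applies: $b_2-\tfrac{b_2^2}{2}>0$ only when $b_2=1$, the $b_1$ term is bounded by $\tfrac12$, and if $b_2=1$ the leading $q$-terms cancel leaving a strictly negative bound (using $q\geq 3$, $p\geq 2$, $r\geq 2$; and, if needed, that $b_3,b_4$ cannot both vanish in the relevant subcase because of the basis form). Hence $Q_1$ is not totally positive semi-definite, a contradiction.

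\textbf{The $0+2$ case.} Here $Q=Q_1+Q_2$ with $Q_2=\gamma y^2$, $\gamma\in\O_K^+$, and $Q_1=2x^2+2\sqrt q\,xy+\left(\tfrac{q+1}{2}-\gamma\right)y^2$, so $\det(Q_1)=2\left(\tfrac{q+1}{2}-\gamma\right)-q=1-2\gamma$. Since $\gamma$ is a nonzero totally positive algebraic integer, $\tfrac14\operatorname{Tr}_{K/\Q}(\gamma)\geq 1$, so $\tfrac14\operatorname{Tr}_{K/\Q}(\det(Q_1))=1-2\cdot\tfrac14\operatorname{Tr}_{K/\Q}(\gamma)<0$ (in fact $\le -1$), excluding this case too. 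Combining both cases gives additive indecomposability of $Q$ in $\Q(\sqrt p,\sqrt q)$.

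\textbf{Main obstacle.} The only delicate point is the trace-positivity computation in the $1+1$ case: one must verify the constant terms and the integrality constraints on the $b_i$ carefully for the specific Case~(1) basis, and confirm that the inequality stays strict in the boundary subcase $b_1=b_2=1$ (where the $q$-dependence cancels and one must lean on $p\geq 2$ or $r\geq 2$, or on parity forcing some $b_i$ odd). This is entirely routine given the template in Proposition~\ref{prop:ex_p_123}, which is why the authors write ``using almost the same procedure''; I would present it by pointing to that proof and recording only the modified determinant expressions and the final chain of inequalities.
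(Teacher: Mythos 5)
Your proposal is correct and follows exactly the route the paper intends: the paper gives no separate proof of this proposition, only the remark that it follows by ``almost the same procedure'' as Proposition \ref{prop:ex_p_123}, which is precisely your template (split off the $0+2$ and $1+1$ decompositions of the leading coefficient via Lemma \ref{lem:decomp_biqua}, force $\gamma=\beta^2$ from $\det(Q)=1$, and show $\operatorname{Tr}_{K/\Q}(\det(Q_1))<0$). The one point you flag but resolve with the wrong reason --- the boundary subcase $b_2=1$, where the trace bound degenerates to $\tfrac12-\tfrac{b_1^2}{2}-\tfrac{b_3^2}{2}p-\tfrac{b_4^2}{2}r$ and could be $+\tfrac12$ --- is closed not by forcing $b_3,b_4$ to be nonzero (they can both vanish) but by observing that in the Case (1) integral basis $\Z\bigl[1,\sqrt{p},\sqrt{q},\tfrac{\sqrt{p}+\sqrt{r}}{2}\bigr]$ the $\sqrt{q}$-coefficient of any element of $\O_K$ is a genuine integer, so $b_2$ is even and the subcase $b_2=1$ never occurs.
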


\begin{proof}
    See \cite[Theorem 42]{Hla}.
\end{proof}

We see that in Proposition \ref{prop:ex_q_1}, we consider biquadratic fields with an integral base of Case (1). However, it is more difficult to prove a similar statement for Cases (2) and (3). Therefore, we show it under a restrictive condition.

\begin{prop} \label{prop:formq_23_1}
Let $p$ and $q$ be as in Cases (2) or (3) and, moreover, let $21<q<p,r$. Then the following quadratic forms are additively indecomposable in $\Q(\sqrt{p},\sqrt{q})$:
\begin{enumerate}
\item $Q(x,y)=3x^2+2(3+\sqrt{q})xy+\left(\frac{q+10}{3}+2\sqrt{q}\right)y^2$ with determinant $1$ for $q\equiv 5\;(\textup{mod }12)$,
\item $Q(x,y)=3x^2+2(3+\sqrt{q})xy+\left(\frac{q+11}{3}+2\sqrt{q}\right)y^2$ with determinant $2$ for $q\equiv 1\;(\textup{mod }12)$,
\item $Q(x,y)=4x^2+2(2+\sqrt{q})xy+\left(\frac{q+7}{4}+\sqrt{q}\right)y^2$ with determinant $3$ for $q\equiv 9\;(\textup{mod }12)$. 
\end{enumerate} 
\end{prop}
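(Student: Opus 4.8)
The plan is to follow the same strategy as in the proof of Proposition~\ref{prop:ex_p_123}, but now working with the forms from Proposition~\ref{prop:exi1} attached to the subfield $\Q(\sqrt{q})$, and using the hypothesis $21<q<p,r$ to push through the trace estimates. I will treat the three cases of determinant $1$, $2$ and $3$ in parallel, since the mechanism is the same. Suppose $Q=Q_1+Q_2$ with $Q_1,Q_2$ totally positive semi-definite, classical, and nonzero. Looking at the coefficient of $x^2$, which is $3$ (cases (1),(2)) or $4$ (case (3)), Lemma~\ref{lem:decomp_biqua}(2) or (3) restricts how it can split: for $3$ the only splittings are $1+2$, $0+3$ and $\tfrac{3+\sqrt5}{2}+\tfrac{3-\sqrt5}{2}$, and for $4$ there is the list in part (3). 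Since $21<q$ in particular forces $\sqrt5\notin\Q(\sqrt q)$, and $\sqrt5\in\Q(\sqrt p,\sqrt q)$ would force $r=5<q$, contradicting $q<r$, the ``irrational'' splittings of $3$ and $4$ cannot occur; the same reasoning rules out the $(2\pm\sqrt2)$, $(2\pm\sqrt3)$, $\tfrac{3+\sqrt5}{2}+\tfrac{5-\sqrt5}{2}$ and $(2\pm\tfrac{\sqrt2+\sqrt6}{2})$ splittings of $4$, because each would force one of $5,2,3,6$ to be $p$, $q$ or $r$, and the smallest of $p,q,r$ is $q>21$. So the leading coefficient splits only into rational summands: $0+3$, $1+2$ (det.\ $1$, $2$) or $0+4$, $1+3$, $2+2$ (det.\ $3$).

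Next I handle each surviving splitting. If the leading coefficient splits as $0+(\text{leading coeff.})$, then one of $Q_1,Q_2$ has a zero diagonal entry; since it is totally positive semi-definite, the whole first row and column of its Gram matrix must vanish (a principal $2\times2$ minor $\left(\begin{smallmatrix}0&b\\b&c\end{smallmatrix}\right)$ with $b\neq0$ is never totally positive semi-definite, exactly as in the proof of Proposition~\ref{prop:diam}), so that $Q_i$ has the form $\gamma y^2$ with $\gamma\in\O_K^+$. Then $\det(Q_{3-i})=\det(Q)-(\text{coeff. of }y^2)\cdot\gamma$ up to the obvious adjustment, and since $\tfrac14\operatorname{Tr}_{K/\Q}(\gamma)\geq1$ while $\det(Q)\in\{1,2,3\}$, one gets $\operatorname{Tr}_{K/\Q}(\det(Q_{3-i}))<0$, contradicting total positive semi-definiteness. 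For the genuinely two-term splittings of the leading coefficient (e.g.\ $3=1+2$), write $Q_1,Q_2$ with the allowed diagonals, introduce $\beta,\gamma\in\O_K$ for the remaining entries, use $\det(Q)\succeq\det(Q_1)+\det(Q_2)$ together with $\det(Q)\in\{1,2,3\}$ to force $\det(Q_i)=0$ for one $i$ (this uses that $\det(Q_i)\succeq0$ has nonnegative trace and that the two determinants are totally nonnegative algebraic integers summing to something of small trace), solve the resulting equation for $\gamma$ in terms of $\beta$, substitute into $\det(Q_{3-i})$, and expand $\tfrac14\operatorname{Tr}_{K/\Q}(\det(Q_{3-i}))$ in the coordinates $\beta=\tfrac{b_1}2+\tfrac{b_2}2\sqrt q+\tfrac{b_3}2\sqrt p+\tfrac{b_4}2\sqrt r$ (or the appropriate half-integer basis).

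The trace expression will be a quadratic polynomial in $b_1,b_2,b_3,b_4$ with the diagonal terms in $b_3,b_4$ carrying coefficients $-\tfrac{p}2,-\tfrac{r}2$ respectively, so those variables only help make it negative; the only way to get a positive contribution is through a few small values of $b_1,b_2$ (as in Proposition~\ref{prop:ex_p_123}, $b-\tfrac{b^2}2>0$ only for $b=1$, etc.). For those finitely many exceptional choices of $(b_1,b_2)$, the residual expression is bounded above by a constant minus a positive multiple of $q$ (coming from the $-\tfrac q3$- or $-\tfrac q4$-type term that survives), and here is exactly where $q>21$ is used: it makes the constant-minus-$\tfrac{cq}{k}$ bound strictly negative in every case. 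One must be a little careful with the cross terms $b_1b_2$ and with the fact that in Cases (2),(3) the basis element $\tfrac{1+\sqrt q}{2}$ is a half-integer, which couples the parities of $b_1$ and $b_2$; I expect this bookkeeping — keeping track of which half-integer combinations actually lie in $\O_K$ for bases (2) and (3), and checking all residual small-coordinate cases against the bound $q>21$ — to be the main obstacle, rather than any conceptual difficulty. The rest is the same pattern as Propositions~\ref{prop:ex_p_123} and \ref{prop:ex_q_1}, and once every splitting of the leading coefficient has been eliminated, the proof is complete.
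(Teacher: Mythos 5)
Your opening moves match the paper: using Lemma \ref{lem:decomp_biqua} together with $21<q<p,r$ to rule out the irrational splittings of $3$ and $4$ (your justification is actually more explicit than the paper's), and disposing of the $0+u$ splittings via the $\gamma y^2$ argument. However, the core of your argument has two genuine gaps. First, the claim that $\det(Q)\succeq\det(Q_1)+\det(Q_2)$ with $\det(Q)\in\{1,2,3\}$ forces $\det(Q_i)=0$ for some $i$ is false for determinants $2$ and $3$: the three totally nonnegative integers $\det(Q_1)$, $\det(Q_2)$ and $\det(Q)-\det(Q_1)-\det(Q_2)$ are rational integers by the same lemma, but $(1,1,0)$ or $(1,2,0)$ etc.\ are perfectly admissible, so you cannot solve $\gamma=\beta^2/u$ and substitute. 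The paper never makes this reduction; it only uses $\operatorname{Tr}_{K/\Q}(\det(Q_1)+\det(Q_2))\geq 0$ to bound the coefficients of $\beta_i$ in the $\sqrt p$, $\sqrt q$, $\sqrt r$ directions, and separately uses $\det(Q_i)\in\Z$ to kill the irrational parts of $\gamma_i$.

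Second, and more seriously, your endgame — that after restricting to small coordinates ``the residual expression is bounded above by a constant minus a positive multiple of $q$'' and is therefore negative — cannot work, because it would in particular rule out decompositions with all data in $\Q(\sqrt q)$, and those survive every such trace estimate: with $\beta_i\in\Q(\sqrt q)$ and $(b^{(1)},b^{(2)})=(1,1)$ for the $\sqrt q$-coordinate, the upper bound is of the shape $q+11-\tfrac q2>0$, so no contradiction arises. These subfield decompositions are exactly what remains after the trace estimates, and the paper eliminates them not by computation but by showing $\beta_i,\gamma_i\in\Q(\sqrt q)$ (the latter via $\det(Q_i)\in\Z$ and $u_i\neq 0$) and then invoking the additive indecomposability of $Q$ over $\Q(\sqrt q)$ from Proposition \ref{prop:exi1}. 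You cite that proposition only as the source of the forms, not as the final step; without this reduction you would effectively have to reprove it, which is not a trace-estimate argument. With these two repairs — dropping the ``one determinant vanishes'' step in favour of $\det(Q_i)\in\Z$, and ending with the reduction to $\Q(\sqrt q)$ — your outline becomes the paper's proof.
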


\begin{proof}
By Lemma \ref{lem:decomp_biqua}, we know that $3$ and $4$ can only decompose as a sum of two rational integers. To get a contradiction with the statement of the proposition, let us suppose that 
\[
Q(x,y)=Q_1(x,y)+Q_2(x,y)=(u_1x^2+2\beta_1 xy +\gamma_1y^2)+(u_2x^2+2\beta_2 xy +\gamma_2y^2)
\] 
where $u_i\in\N_0$, $\beta_i\in\O_K$ and $\gamma_i\in\O_K^+\cup\{0\}$ for $i=1,2$. Moreover, using similar arguments as at the end of the proof of Proposition \ref{prop:ex_p_123}, we can easily exclude $u_i=0$ for some $i$. Put $\beta_i=\frac{b_1^{(i)}}{2}+\frac{b_2^{(i)}}{2}\sqrt{p}+\frac{b_3^{(i)}}{2}\sqrt{q}+\frac{b_4^{(i)}}{2}\sqrt{r}$ and $\gamma_i=\frac{c_1^{(i)}}{2}+\frac{c_2^{(i)}}{2}\sqrt{p}+\frac{c_3^{(i)}}{2}\sqrt{q}+\frac{c_4^{(i)}}{2}\sqrt{r}$ where $b_j^{(i)},c_j^{(i)}\in\Z$ are such that $\beta,\gamma\in\O_K$. Then
\begin{multline} \label{eq:det_23_1}
\frac{1}{4}\text{Tr}_{K/\Q}(\det(Q_1)+\det(Q_2))=\frac{u_1c_1^{(1)}}{2}+\frac{u_2c_1^{(2)}}{2}-\frac{(b_1^{(1)})^2}{4}-\frac{(b_1^{(2)})^2}{4}-\frac{(b_2^{(1)})^2p}{4}-\frac{(b_2^{(2)})^2p}{4}\\-\frac{(b_3^{(1)})^2q}{4}-\frac{(b_3^{(2)})^2q}{4}-\frac{(b_4^{(1)})^2r}{4}-\frac{(b_4^{(2)})^2r}{4}\geq 0.
\end{multline} 
From the form of $Q$, it is clear that $b_2^{(1)}=-b_2^{(2)}$ and $b_4^{(1)}=-b_4^{(2)}$. Moreover, for all the forms in the statement, easily, we have
\begin{equation} \label{eq:estim_trac_23_1}
\frac{u_1c_1^{(1)}}{2}+\frac{u_2c_1^{(2)}}{2}\leq q+11.
\end{equation}
If $|b_2^{(i)}|\geq 2$ for some $i$, (\ref{eq:det_23_1}) and (\ref{eq:estim_trac_23_1}) implies
\[
0\leq q+11-\frac{(b_2^{(i)})^2p}{2}\leq q+11-2q=11-q<0
\]
for $p>q>21$, which is a contradiction. Thus, $|b_2^{(i)}|\leq 1$. Using an analogous argument, the same is also true for $b_4^{(i)}$. 

If $|b_3^{(i)}|\geq 3$ for some $i$, (\ref{eq:det_23_1}) and (\ref{eq:estim_trac_23_1}) gives
\[
0\leq q+11-\frac{(b_3^{(i)})^2q}{4}\leq q+11-\frac{9q}{4}=11-\frac{5q}{4}<0,
\]
which is impossible. Thus, $|b_3^{(i)}|\leq 2$ for $i=1,2$. Moreover, from the type of our forms, we see that $\frac{b_3^{(1)}}{2}+\frac{b_3^{(2)}}{2}=1$, which implies that $(b_3^{(1)},b_3^{(2)})\in\{(0,2),(1,1),(2,0)\}$. In particular, it follows that
\[
\frac{(b_3^{(1)})^2q}{4}+\frac{(b_3^{(2)})^2q}{4}\geq \frac{q}{2}.
\]

If $|b_2^{(i)}|=1$, then $|b_4^{(i)}|=1$ and vice versa. Similarly, as before, in this case, we obtain
\[
0\leq q+11-\frac{p}{2}-\frac{q}{2}-\frac{r}{2}<q+11-\frac{3q}{2}=11-\frac{q}{2}<0
\] 
for $q>21$. That means that $b_2^{(i)}=b_4^{(i)}=0$ for $i=1,2$, and, thus, $\beta_i\in\Q(\sqrt{q})$. Then
\[
\det(Q_i)=\frac{u_ic_1^{(i)}}{2}-\frac{(b_1^{(i)})^2}{4}-\frac{(b_3^{(i)})^2q}{4}+\frac{u_ic_2^{(i)}}{2}\sqrt{p}+\frac{u_ic_3^{(i)}-b_1^{(i)}b_3^{(i)}}{2}\sqrt{q}+\frac{u_ic_4^{(i)}}{2}\sqrt{r},
\] 
which needs to be totally positive or zero. Moreover, since $\det(Q)\in\{1,2,3\}$, Lemma~\ref{lem:decomp_biqua} implies $\det(Q_i)\in\Z$. Since $u_i\neq 0$, we have $c_2^{(i)}=c_4^{(i)}=0$ for $i=1,2$. That implies $\gamma_i\in\Q(\sqrt{q})$, and with $\beta_i\in\Q(\sqrt{q})$ from above, it follows that $Q_1$ and $Q_2$ are quadratic forms over $\Q(\sqrt{q})$. However, by \cite[Proposition 5.4]{TY}, the forms in the statement are additively indecomposable in $\Q(\sqrt{q})$, which finishes the proof.
\end{proof}

Thus, as we can see, we sometimes get additively indecomposable binary quadratic forms from all three subfields $\Q(\sqrt{p})$, $\Q(\sqrt{q})$, and $\Q(\sqrt{r})$ -- which can still be equivalent.

\subsection{Case (4)} \label{subsec:case4}

Now, we will focus on the biquadratic fields for which $p,q\equiv 1\;(\text{mod }4)$. In this case, the situation does not seem to be so simple as in the previous subsection. We first prove the existence under the conditions $21<p,q,r$, and the remaining cases are treated in Subsection \ref{subsec:special}. Note that in this case, $p$, $q$, and $r$ are interchangeable, so we can assume $p<q<r$.

\begin{prop} \label{prop:111}
Let $p,q\equiv 1\;(\textup{mod }4)$ and $21<p<q<r$. Then the following quadratic forms are additively indecomposable in $\Q(\sqrt{p},\sqrt{q})$:
\begin{enumerate}
\item $Q(x,y)=3x^2+2(3+\sqrt{p})xy+\left(\frac{p+10}{3}+2\sqrt{p}\right)y^2$ with determinant $1$ for $p\equiv 5\;(\textup{mod }12)$,
\item $Q(x,y)=3x^2+2(3+\sqrt{p})xy+\left(\frac{p+11}{3}+2\sqrt{p}\right)y^2$ with determinant $2$ for $p\equiv 1\;(\textup{mod }12)$,
\item $Q(x,y)=4x^2+2(2+\sqrt{p})xy+\left(\frac{p+7}{4}+\sqrt{p}\right)y^2$ with determinant $3$ for $p\equiv 9\;(\textup{mod }12)$. 
\end{enumerate} 
\end{prop}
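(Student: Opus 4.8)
The strategy mirrors the proof of Proposition~\ref{prop:formq_23_1}, now with $p$ (the smallest of the three radicands) playing the role previously played by $q$, and exploiting that $p$, $q$, $r$ are all at least $21$. Suppose for contradiction that $Q = Q_1 + Q_2$ with $Q_i = u_i x^2 + 2\beta_i xy + \gamma_i y^2$, $u_i \in \N_0$, $\beta_i \in \O_K$, $\gamma_i \in \O_K^+ \cup \{0\}$. First I would dispose of the degenerate cases $u_i = 0$ exactly as at the end of the proof of Proposition~\ref{prop:ex_p_123}: if $u_1=0$ then $Q_1 = 2\beta_1 xy + \gamma_1 y^2$ has a principal submatrix forcing $\beta_1 = 0$ by total positive semidefiniteness, and then $u_2 = 3$ or $4$ equals $u_1 + u_2$ forces $Q_1 = \gamma_1 y^2$, whence $\det(Q_1) = 0$ but $\det(Q_1) \succeq 0$ fails against $\det(Q) \in \{1,2,3\}$; so $u_1, u_2 \in \N$ with $u_1 + u_2 = \det(Q)+1 \in \{3,4\}$ (here $u_1+u_2$ equals the coefficient of $x^2$ in $Q$, which is $3$ or $4$), and Lemma~\ref{lem:decomp_biqua}(1)--(3) says this splitting of $3$ or $4$ is into rational integers $u_i$.

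Next, write $\beta_i = \tfrac{1}{2}(b_1^{(i)} + b_2^{(i)}\sqrt{p} + b_3^{(i)}\sqrt{q} + b_4^{(i)}\sqrt{r})$ and similarly $\gamma_i$ with coefficients $c_j^{(i)}$. Matching the $x^2$ and $xy$ coefficients of $Q = Q_1+Q_2$ gives $b_2^{(1)} = -b_2^{(2)}$, $b_3^{(1)} = -b_3^{(2)}$, $b_4^{(1)} = -b_4^{(2)}$ (since $Q$ has $\sqrt{q}$-free and $\sqrt{r}$-free off-diagonal coefficient — its $xy$ coefficient is $2(3+\sqrt p)$ resp.\ $2(2+\sqrt p)$), while the $\sqrt p$-parts sum to $2\sqrt p$ or so. The trace inequality $\tfrac14\mathrm{Tr}_{K/\Q}(\det Q_1 + \det Q_2) \geq 0$ — valid because both $\det Q_i \succeq 0$ — expands, just as in \eqref{eq:det_23_1}, to
\[
\tfrac{u_1 c_1^{(1)} + u_2 c_1^{(2)}}{2} \;\geq\; \sum_{i=1,2}\sum_{j=1}^4 \tfrac{(b_j^{(i)})^2 \cdot (\text{rad}_j)}{4},
\]
where $\text{rad}_1 = 1$, $\text{rad}_2 = p$, $\text{rad}_3 = q$, $\text{rad}_4 = r$. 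Bounding the left side: since $c_1^{(i)}$ is (twice) the rational part of $\gamma_i$ and $\gamma_1 + \gamma_2$ is the fixed $y^2$-coefficient of $Q$, one gets $\tfrac12(u_1 c_1^{(1)} + u_2 c_1^{(2)}) \leq$ roughly $\det(Q) \cdot (\text{rational part of } y^2\text{-coeff})$, which for all three forms is bounded by something like $p + 11$ (the analogue of \eqref{eq:estim_trac_23_1}); I would verify the exact constant case-by-case, but it is $O(p)$ with a small additive constant. Then, exactly as in Proposition~\ref{prop:formq_23_1}: $|b_2^{(i)}| \geq 1$ would contribute $\geq p/2$ on the right, and combined with $b_4^{(i)} = -b_4^{(2)}$ being forced by integral-basis parity constraints to be odd (hence nonzero when $b_2^{(i)}$ is, in Cases where $\tfrac{\sqrt p + \sqrt r}{?}$-type basis elements couple them — though in Case (4) the basis element $\tfrac{1\pm\sqrt p+\sqrt q+\sqrt r}{4}$ couples all three, so the parity bookkeeping is slightly different and needs care) pushes the right side past $p + 11 < \tfrac32 p$; contradiction. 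Similarly $|b_3^{(i)}| \geq 3$ or $|b_4^{(i)}| \geq 2$ gives too large a right side. So all the "cross" coefficients are small, and a final elimination (using that $\det(Q_i) \in \Z$ by Lemma~\ref{lem:decomp_biqua} since $\det Q \in \{1,2,3\}$, forcing the $\sqrt p, \sqrt q, \sqrt r$-parts of $\det Q_i$ to vanish, hence $c_2^{(i)} = c_3^{(i)} = c_4^{(i)} = 0$ and $b_2^{(i)}=b_3^{(i)}=b_4^{(i)}=0$) shows $Q_1, Q_2$ are forms over $\Q(\sqrt p)$, contradicting Proposition~\ref{prop:exi1}.

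\textbf{Main obstacle.} The delicate point, and where this differs genuinely from Proposition~\ref{prop:formq_23_1}, is the parity/integrality bookkeeping forced by the Case~(4) integral bases \'{(4a)} and \'{(4b)}, in which the generator $\tfrac{1 \pm \sqrt p + \sqrt q + \sqrt r}{4}$ has denominator $4$ and couples \emph{all three} square roots simultaneously. This means the constraints on $(b_1^{(i)}, b_2^{(i)}, b_3^{(i)}, b_4^{(i)})$ for $\beta_i \in \O_K$ are not the simple "each $b_j$ has fixed parity" conditions of Cases~(1)--(3); one must track congruences mod $4$ among the $b_j^{(i)}$ jointly. I expect the bulk of the work is checking that in every admissible congruence class the right-hand sum still exceeds the $O(p)$ bound whenever some $|b_j^{(i)}|$ is large, and that when all are small the residual finitely many configurations all reduce to the $\Q(\sqrt p)$ case. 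The condition $21 < p < q < r$ is exactly what gives the slack $\tfrac32 p > p + 11$ (and its analogues) needed to kill the large-coefficient cases; the small-$p$ exceptions $\sqrt 5, \sqrt{13}, \sqrt{17}, \sqrt{21} \in K$ are precisely where this slack fails and are deferred to Subsection~\ref{subsec:special}.
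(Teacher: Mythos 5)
Your overall framework (decompose, use Lemma \ref{lem:decomp_biqua} to force $u_1,u_2\in\Z$, bound the trace of $\det Q_1+\det Q_2$, and reduce to the known indecomposability over $\Q(\sqrt p)$) is the right one and matches the paper's. But there is a genuine gap at the central step, and it is not the one you flag as the ``main obstacle.'' You assert $b_2^{(1)}=-b_2^{(2)}$ and then try to kill any nonzero $b_2^{(i)}$ with the \emph{summed} trace inequality. The relation $b_2^{(1)}=-b_2^{(2)}$ is false here: the $xy$-coefficient of $Q$ is $2(3+\sqrt p)$ (resp.\ $2(2+\sqrt p)$), so the $\sqrt p$-parts of $\beta_1,\beta_2$ sum to $1$, not $0$; only the $\sqrt q$- and $\sqrt r$-parts cancel. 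This is exactly what distinguishes Proposition \ref{prop:111} from Proposition \ref{prop:formq_23_1}: there the ``active'' radicand $q$ was the smallest of the three and still large relative to the trace bound; here the active radicand is $p$, the smallest, and the summed trace inequality has no teeth against balanced splittings. Concretely, writing $\beta_i$ with denominator $4$ (as Case (4) requires), the constraint is $b_2^{(1)}+b_2^{(2)}=4$, and for the splitting $(2,2)$ the $\sqrt p$-contribution to the right-hand side is only $\tfrac{4p}{16}+\tfrac{4p}{16}=\tfrac p2$, while for $(1,3)$ it is $\tfrac{10p}{16}$; both are comfortably below the bound $p+11$, even after adding the forced contributions $\tfrac{q+r}{8}>\tfrac{2p}{8}$ from $|b_3^{(i)}|=|b_4^{(i)}|=1$ in the $(1,3)$ case. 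So no contradiction arises from $\operatorname{Tr}(\det Q_1+\det Q_2)\ge 0$, and your claimed inequality ``right side $>\tfrac32 p > p+11$'' is never reached.

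The paper closes these cases by a different, finer mechanism that your sketch does not contain: it applies $\operatorname{Tr}(\det Q_i)\ge 0$ to each summand \emph{separately}, which yields lower bounds on $\tfrac{c_1^{(1)}}{4}$ and $\tfrac{c_1^{(2)}}{4}$ depending on the individual values of $(u_1,u_2)$ (e.g.\ $\tfrac{c_1^{(1)}}{4}>\tfrac{3p}{16}$ when $u_1=1$, $\tfrac{c_1^{(2)}}{4}>\tfrac{11p}{16}$ when $u_2=1$, etc.), and then derives a contradiction with the fixed total $\tfrac{c_1^{(1)}}{4}+\tfrac{c_1^{(2)}}{4}\in\{\tfrac{p+10}{3},\tfrac{p+11}{3},\tfrac{p+7}{4}\}$ through a case analysis over $(u_1,u_2)\in\{(1,2),(2,1)\}$ or $\{(1,3),(2,2),(3,1)\}$ and over which of $b_3^{(i)},b_4^{(i)}$ vanish. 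Only the splitting $(0,4)$ (and the subcase of $(2,2)$ with $b_3=b_4=0$) reduces, as you describe, to forms over $\Q(\sqrt p)$ and hence to \cite[Proposition 5.4]{TY}. Your concern about the denominator-$4$ congruence bookkeeping is legitimate but secondary; without the individual-trace case analysis for the balanced splittings of the $\sqrt p$-coefficient, the proof does not go through.
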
 

\begin{proof}
Similarly to the proof of Proposition \ref{prop:formq_23_1}, the numbers $3$ and $4$ have only decompositions lying in $\Z$. Therefore, every possible decomposition of $Q$ needs to be of the form
\[
Q(x,y)=Q_1(x,y)+Q_2(x,y)=(u_1x^2+2\beta_1 xy +\gamma_1y^2)+(u_2x^2+2\beta_2 xy +\gamma_2y^2)
\] 
where $u_i\in\N_0$, $\beta_i\in\O_K$ and $\gamma_i\in\O_K^+\cup\{0\}$ for $i=1,2$. Moreover, similarly as before, we can exclude cases with $u_i=0$ for some $i$. Now, let us write $\beta_i=\frac{b_1^{(i)}}{4}+\frac{b_2^{(i)}}{4}\sqrt{p}+\frac{b_3^{(i)}}{4}\sqrt{q}+\frac{b_4^{(i)}}{4}\sqrt{r}$ and $\gamma_i=\frac{c_1^{(i)}}{4}+\frac{c_2^{(i)}}{4}\sqrt{p}+\frac{c_3^{(i)}}{4}\sqrt{q}+\frac{c_4^{(i)}}{4}\sqrt{r}$ where $b_j^{(i)},c_j^{(i)}\in\Z$ are such that $\beta_i,\gamma_i\in\O_K$. Then
\begin{multline} \label{eq:det_111}
\frac{1}{4}\text{Tr}_{K/\Q}(\det(Q_1)+\det(Q_2))=\frac{u_1c_1^{(1)}}{4}+\frac{u_2c_1^{(2)}}{4}-\frac{(b_1^{(1)})^2}{16}-\frac{(b_1^{(2)})^2}{16}-\frac{(b_2^{(1)})^2p}{16}-\frac{(b_2^{(2)})^2p}{16}\\-\frac{(b_3^{(1)})^2q}{16}-\frac{(b_3^{(2)})^2q}{16}-\frac{(b_4^{(1)})^2r}{16}-\frac{(b_4^{(2)})^2r}{16}\geq 0.
\end{multline}
Moreover, again $b_3^{(1)}=-b_3^{(2)}$, $b_4^{(1)}=-b_4^{(2)}$ and 
\begin{equation} \label{eq:estim_trace_111}
\frac{u_1c_1^{(1)}}{4}+\frac{u_2c_1^{(2)}}{4}\leq p+11.
\end{equation}

We see that if $|b_2^{(i)}|\geq 5$ for some $i$, then (\ref{eq:det_111}) and (\ref{eq:estim_trace_111}) imply
\[
0\leq p+11-\frac{25p}{16}=11-\frac{9p}{16}p<0
\]
for $p>21$. That gives $|b_2^{(i)}|\leq 4$ for $i=1,2$. The condition $\frac{b_2^{(1)}}{16}+\frac{b_2^{(2)}}{16}=1$ coming from the type of our forms implies $(b_2^{(1)},b_2^{(2)})\in\{(0,4),(1,3),(2,2),(3,1),(4,0)\}$ where the cases $(0,4)$ and $(4,0)$, and $(1,3)$ and $(3,1)$ are interchangeable, so we can restrict to $\{(0,4),(1,3),(2,2)\}$.

First, let us consider $(b_2^{(1)},b_2^{(2)})=(0,4)$. If $b_3^{(1)}$ is non-zero, then the form of our integral basis implies that it is even. However, then (\ref{eq:det_111}) and (\ref{eq:estim_trace_111}) give
\[
0\leq p+11-p-\frac{4q}{16}-\frac{4q}{16}<11-\frac{p}{2}<0
\]  
whenever $p>21$, $p\equiv1\;(\text{mod }4)$. We can use a similar argument for $b_4^{(1)}$. Together, we get $b_3^{(1)}=b_3^{(2)}=b_4^{(1)}=b_4^{(2)}=0$ in this case. Then
\[
\det(Q_1)=\frac{u_1c_1^{(1)}}{4}-\frac{(b_1^{(1)})^2}{16}+\frac{u_1c_2^{(1)}}{4}\sqrt{p}+\frac{u_1c_3^{(1)}}{4}\sqrt{q}+\frac{u_1c_4^{(1)}}{4}\sqrt{r}
\] 
and
\[
\det(Q_2)=\frac{u_2c_1^{(2)}}{4}-p-\frac{(b_1^{(2)})^2}{16}+\frac{u_2c_2^{(2)}-2b_1^{(2)}}{4}\sqrt{p}+\frac{u_2c_3^{(2)}}{4}\sqrt{q}+\frac{u_2c_4^{(2)}}{4}\sqrt{r}.
\]
Moreover, since $\det(Q)\in\{1,2,3\}$, Lemma \ref{lem:decomp_biqua} implies $\det(Q_1),\det(Q_2)\in\Z$.
We also have $u_1,u_2\neq 0$, so $c_3^{(1)}=c_3^{(2)}=c_4^{(1)}=c_4^{(2)}=0$. Thus, $Q_1$ and $Q_2$ are forms over $\Q(\sqrt{p})$, and since $Q$ is additively indecomposable over this subfield \cite[Proposition 5.4]{TY}, the case $(b_2^{(1)},b_2^{(2)})=(0,4)$ does not give a suitable decomposition.

Now, suppose that $(b_2^{(1)},b_2^{(2)})=(1,3)$. Then $b_3^{(i)}$ and $b_4^{(i)}$ are odd, and it is not so hard to prove (using (\ref{eq:det_111}) and (\ref{eq:estim_trace_111})) that $|b_3^{(i)}|=|b_4^{(i)}|=1$. Now, we will consider traces of the determinants of $Q_1$ and $Q_2$ for different choices of $u_1$ and $u_2$. 
\begin{enumerate}
\item Let $Q$ be as in (1) or (2). In this case, $u_1+u_2=3$.
\begin{enumerate}
\item If $u_1=1$, then, to obtain $\frac{1}{4}\text{Tr}_{K/\Q}(\det(Q_1))\geq 0$, we need to have 
\[
\frac{u_1c_1^{(1)}}{4}-\frac{(b_1^{(1)})^2}{16}-\frac{p}{16}-\frac{q}{16}-\frac{r}{16}\geq 0,
\]
which implies $\frac{c_1^{(1)}}{4}>\frac{3p}{16}$, and, in a similar manner, using $u_2=2$, we get $\frac{c_1^{(2)}}{4}>\frac{11p}{32}$. Together, we get 
\[
\frac{p+11}{3}\geq \frac{c_1^{(1)}}{4}+\frac{c_1^{(2)}}{4}>\frac{17p}{32},
\]
which is impossible for $p>21$.
\item If $u_1=2$, then $u_2=1$. To get $\frac{1}{4}\text{Tr}_{K/\Q}(\det(Q_1)),\frac{1}{4}\text{Tr}_{K/\Q}(\det(Q_2))\geq 0$, one need to have $\frac{u_1c_1^{(1)}}{4}=\frac{2c_1^{(1)}}{4}>\frac{3p}{16}$ and $\frac{u_2c_1^{(2)}}{4}=\frac{c_1^{(2)}}{4}>\frac{11p}{16}$. Then
\[
\frac{p+11}{3}=\frac{c_1^{(1)}}{4}+\frac{c_1^{(2)}}{4}>\frac{3p}{32}+\frac{11p}{16}=\frac{25p}{32},
\]
which is not satisfied for $p>21$.   
\end{enumerate}
\item Now, let $Q$ be as in (3), so $u_1+u_2=4$. 
\begin{enumerate}
\item If $u_1=1$, then, similarly to before, we can see that
\[
\frac{p+7}{4}=\frac{c_1^{(1)}}{4}+\frac{c_1^{(2)}}{4}>\frac{3p}{16}+\frac{11p}{48}=\frac{5p}{12}
\]
cannot be true for $p>21$. 
\item If $u_1=u_2=2$, using $\frac{1}{4}\text{Tr}_{K/\Q}(\det(Q_2))\geq 0$, we have $2\frac{p+7}{4}>\frac{11p}{16}$, which is not satisfied for $p>21$.
\item In the end, let $u_1=3$ and $u_2=1$. Then, similarly to the previous part, we need $\frac{p+7}{4}>\frac{11p}{16}$, which is not possible. 
\end{enumerate}
\end{enumerate}
By this, we excluded $(b_2^{(1)},b_2^{(2)})=(1,3)$.

Finally, let $(b_2^{(1)},b_2^{(2)})=(2,2)$. In this case, $b_3^{(i)}$ and $b_4^{(i)}$ are even for $i=1,2$. Using the same techniques as before, it is easy to prove that $|b_3^{(i)}|,|b_4^{(i)}|\in\{0,2\}$, and, moreover, at least one of $b_3^{(i)}$ and $b_4^{(i)}$ is zero. If both of them are zero, we can use a similar procedure as for $(b_2^{(1)},b_2^{(2)})=(0,4)$ to conclude that $Q_1$ and $Q_2$ are forms over $\Q(\sqrt{p})$, which is impossible. So, let us now suppose that exactly one of $b_3^{(i)}$ and $b_4^{(i)}$ is zero, i.e., equal to $2$ in the absolute value. Now, we again distinguish different cases of $Q$.
\begin{enumerate}
\item Let $Q$ be as in (1) or (2). Then, without loss of generality, we can suppose $u_1=1$. That gives
\begin{multline*}
0\leq \frac{1}{4}\text{Tr}_{K/\Q}(\det(Q_1))=\frac{u_1c_1^{(1)}}{4}-\frac{(b_1^{(1)})^2}{16}-\frac{p}{4}-\frac{(b_3^{(1)})^2q}{16}-\frac{(b_3^{(1)})^2r}{16}\\<\frac{p+11}{3}-\frac{p}{4}-\frac{p}{4}=\frac{11}{3}-\frac{p}{6}<0
\end{multline*}
for $p>21$.  
\item Now, let $Q$ be as in (4). Then $u_1+u_2=4$, and it suffices to study $(u_1,u_2)\in\{(1,3),(2,2)\}$.
\begin{enumerate}
\item If $u_1=1$, a similar argument implies $\frac{p+7}{4}>\frac{p}{2}$, which is not true for $p>21$.
\item If $u_1=u_2=2$, we obtain
\[
\frac{p+7}{4}=\frac{c_1^{(1)}}{4}+\frac{c_1^{(2)}}{4}>\frac{p}{4}+\frac{p}{4}=\frac{p}{2},
\]
which leads to the same conclusion.
\end{enumerate}
\end{enumerate}
By this, we excluded the last case, and the proof is completed.          
\end{proof}

\subsection{Exceptional cases} \label{subsec:special}

In Subsection \ref{subsec:cases1_3}, we completely resolved the case when $p\equiv 2,3\;(\text{mod }4)$. Most of the cases with $p\equiv q\equiv 1\;(\text{mod }4)$ were studied in Subsection~\ref{subsec:case4}. Thus, it remains to look at those biquadratic fields, for which $p\in\{5,13,17,21\}$ and $q\equiv 1\;(\text{mod }4)$.

\subsubsection{$\sqrt{5}\in K$}

The first exceptional cases that we will discuss are biquadratic fields containing $\sqrt{5}$.

\begin{prop} \label{prop:sqrt5}
Let $q\equiv 1\;(\textup{mod }4)$, $q>5$ square-free. Then the quadratic form $2x^2+2xy+(3+\sqrt{5})y^2$ is additively indecomposable in $\Q(\sqrt{5},\sqrt{q})$. 
\end{prop}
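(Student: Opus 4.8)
The plan is to assume, for contradiction, that $Q:=2x^2+2xy+(3+\sqrt5)y^2 = Q_1+Q_2$ over $\O_K$, $K=\Q(\sqrt5,\sqrt q)$, with $Q_1,Q_2$ nonzero classical totally positive semi-definite forms, and then to show that in fact $Q_1,Q_2$ are forms over $\O_{\Q(\sqrt5)}$, which reduces the statement to the indecomposability of $Q$ over $\Q(\sqrt5)$. Take $p=5$; let $\sigma_1,\sigma_3$ be the embeddings fixing $\sqrt5$ and $\sigma_2,\sigma_4$ the ones sending $\sqrt5\mapsto-\sqrt5$, and write $r$ for the radicand of the third quadratic subfield, so that $\sqrt q,\sqrt r$ are negated by $\sigma_3$ and $\sqrt q,\sqrt r\ge\sqrt{13}$ (since $q>5$ is square-free with $q\equiv1\pmod 4$). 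The key fact is that $\det(Q)=5+2\sqrt5$ and the coefficient $3+\sqrt5$ have tiny conjugates under $\sigma_2$ and $\sigma_4$, namely $5-2\sqrt5<\tfrac12$ and $3-\sqrt5<1$. By density of $\sigma_i(\O_K)$ in $\R$, total positive semi-definiteness over $\O_K$ is equivalent to positive semi-definiteness of each conjugate Gram matrix over $\R$; hence $\det(Q)\succeq\det(Q_1)+\det(Q_2)$, and the diagonal entries and determinants of $Q_1,Q_2$ are totally nonnegative and $\preceq$ those of $Q$. By Lemma~\ref{lem:decomp_biqua}(1), the coefficients of $x^2$ in $Q_1,Q_2$ form the multiset $\{0,2\}$ or $\{1,1\}$.

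\emph{Descent.} If the $x^2$-coefficients are $\{0,2\}$, then after swapping $Q_1=\gamma y^2$ with $\gamma\in\O_K^+$ and $Q_2=2x^2+2xy+(3+\sqrt5-\gamma)y^2$, so $\det(Q_2)=5+2\sqrt5-2\gamma\succeq0$ and $\gamma\succeq0$ give $0\le\sigma_j(\gamma)\le\sigma_j\bigl(\tfrac{5+2\sqrt5}{2}\bigr)$ for all $j$. If they are $\{1,1\}$, write $Q_i=x^2+2\beta_i xy+\gamma_i y^2$ with $\beta_1+\beta_2=1$, $\gamma_1+\gamma_2=3+\sqrt5$, $\beta_i\in\O_K$; then $\gamma_i\succeq\beta_i^2$ yields $\beta_1^2+\beta_2^2\preceq3+\sqrt5$ (hence $\sigma_j(\beta_1)^2+(1-\sigma_j(\beta_1))^2\le\sigma_j(3+\sqrt5)$) and $\beta_1^2\preceq\gamma_1\preceq3+\sqrt5-\beta_2^2$. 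In every case, write the relevant coefficient $\alpha$ as $x_0+y_0\sqrt5+z_0\sqrt q+w_0\sqrt r$ (with $4z_0,4w_0\in\Z$, as we are in Case (4)). Using $\sqrt r=\sqrt5\,\sqrt q$ when $5\nmid q$ (and $\sqrt q=\sqrt5\,\sqrt r$ when $5\mid q$), one has $\sigma_1(\alpha)-\sigma_3(\alpha)=2\sqrt q\,(z_0+w_0\sqrt5)$ and $\sigma_2(\alpha)-\sigma_4(\alpha)=2\sqrt q\,(z_0-w_0\sqrt5)$; the bounds above --- very sharp at $\sigma_2,\sigma_4$ exactly because the conjugates $5-2\sqrt5$ and $3-\sqrt5$ are small --- give, after dividing by $\sqrt q\ge\sqrt{13}$, that $|z_0+w_0\sqrt5|<\tfrac23$ and $|z_0-w_0\sqrt5|<\tfrac18$. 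Subtracting isolates $2w_0\sqrt5$ (resp. $2z_0\sqrt5$ when $5\mid q$), forcing $|w_0|<\tfrac14$ and hence $w_0=0$; then $|z_0|<\tfrac18$, so $z_0=0$ too. Applying this first to $\beta_1$ (so $\beta_1,\beta_2\in\Q(\sqrt5)$) and then to $\gamma_1$ (now $\beta_1^2,\beta_2^2\in\Q(\sqrt5)$, and $\gamma_1$ lies between two elements of $\Q(\sqrt5)$ whose difference $3+\sqrt5-\beta_1^2-\beta_2^2$ has small $\sigma_2,\sigma_4$-conjugates), we conclude $Q_1,Q_2$ are forms over $\O_{\Q(\sqrt5)}$.

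\emph{Indecomposability over $\Q(\sqrt5)$.} Write $\omega=\tfrac{1+\sqrt5}{2}$, so $\bar\omega^2=\tfrac{3-\sqrt5}{2}$. If the $x^2$-coefficients are $\{0,2\}$, then $\gamma\in\O_{\Q(\sqrt5)}^+$ with $\sigma_1(\gamma)\le\tfrac{5+2\sqrt5}{2}$, $\sigma_2(\gamma)\le\tfrac{5-2\sqrt5}{2}$, whence $N(\gamma)\le\tfrac{5+2\sqrt5}{2}\cdot\tfrac{5-2\sqrt5}{2}=\tfrac54$; since $N(\gamma)$ is a positive rational integer, $\gamma$ is a totally positive unit $\omega^{2k}$. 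But $\sigma_2(\gamma)=\bar\omega^{2k}\le\tfrac{5-2\sqrt5}{2}<\tfrac{3-\sqrt5}{2}=\bar\omega^2$ forces $k\ge2$, so $\sigma_1(\gamma)=\omega^{2k}\ge\omega^4=\tfrac{7+3\sqrt5}{2}>\tfrac{5+2\sqrt5}{2}$, a contradiction. If they are $\{1,1\}$, taking traces in $\beta_1^2+\beta_2^2\preceq3+\sqrt5$ gives $\text{Tr}(\beta_1^2)-\text{Tr}(\beta_1)\le1$, i.e. $(m-1)^2+5n^2\le3$ for $\beta_1=\tfrac{m+n\sqrt5}{2}$, so $\beta_1\in\{0,1\}$; but then one of $Q_1,Q_2$ is $x^2+\gamma' y^2$ and the other $x^2+2xy+(3+\sqrt5-\gamma')y^2$, and total positive semi-definiteness of both forces $\gamma'\succeq0$ together with $3+\sqrt5-\gamma'\succeq1$, i.e. $0\le\sigma_2(\gamma')\le2-\sqrt5<0$, a contradiction. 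This proves the proposition.

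The main obstacle is the descent step: the bounds on the coefficients of $Q_1,Q_2$ are borderline, and to run them one must correctly identify which Case (4) integral basis applies and keep track of whether $5\mid q$ (which interchanges the roles of the $\sqrt q$- and $\sqrt r$-coordinates); one also has to set up the betweenness argument for $\gamma_1$ carefully --- using that $\gamma_1-\beta_1^2$ and $(3+\sqrt5-\beta_2^2)-\gamma_1$ are both totally nonnegative and that their sum lies in $\Q(\sqrt5)$ with small $\sigma_2,\sigma_4$-conjugates --- to conclude that the non-$\Q(\sqrt5)$ part of $\gamma_1$ vanishes. Everything downstream of the reduction to $\Q(\sqrt5)$ is a brief computation with the fundamental unit $\omega$.
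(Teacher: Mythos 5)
Your overall strategy coincides with the paper's: show that every decomposition of $Q$ over $\O_K$ is forced to live in $\O_{\Q(\sqrt5)}$, then invoke indecomposability over $\Q(\sqrt5)$. The technical route differs, though. The paper bounds the $\sqrt q$- and $\sqrt r$-coordinates of the $\beta_i$ by a single trace inequality $\mathrm{Tr}_{K/\Q}(\det(Q_1)+\det(Q_2))\ge0$ and handles the $\gamma_i$ by classifying the decompositions of $3+\sqrt5$ via (\ref{eq:biqua_inequ}); you instead use the conjugate differences $\sigma_1-\sigma_3$ and $\sigma_2-\sigma_4$ together with the smallness of the $\sigma_2,\sigma_4$-conjugates of the data, plus a betweenness argument for $\gamma_1$. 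Both work, and your version has the merit of reproving the $\Q(\sqrt5)$-indecomposability with the fundamental unit rather than citing \cite[Theorem 5.30]{TY}. (Your throwaway claim $5-2\sqrt5<\tfrac12$ is false, since $5-2\sqrt5\approx0.528$, but nothing downstream uses it; the quantities actually fed into the descent are the halved conjugates, and those numerical estimates check out, as does the $\{0,2\}$ unit argument.)

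There is one genuine, though easily repaired, gap in the $\{1,1\}$ subcase of the $\Q(\sqrt5)$ step. From $\beta_1^2+(1-\beta_1)^2\preceq3+\sqrt5$ the trace gives $2\,\mathrm{Tr}(\beta_1^2)-2\,\mathrm{Tr}(\beta_1)+2\le6$, i.e.\ $\mathrm{Tr}(\beta_1^2)-\mathrm{Tr}(\beta_1)\le2$, not $\le1$ as you wrote. With $\beta_1=\tfrac{m+n\sqrt5}{2}$ this reads $(m-1)^2+5n^2\le5$, which admits $n=\pm1$, $m=1$, i.e.\ $\beta_1\in\bigl\{\tfrac{1+\sqrt5}{2},\tfrac{1-\sqrt5}{2}\bigr\}$ in addition to $\{0,1\}$; these two extra candidates are not treated in your write-up. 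They are immediately excluded: for $\beta_1=\tfrac{1\pm\sqrt5}{2}$ one has $\sigma_2(\beta_1)^2+\sigma_2(\beta_2)^2=3>3-\sqrt5=\sigma_2(3+\sqrt5)$ (equivalently, $\sigma_2(\beta_1)\approx-0.618$ falls outside the window $[0.14,0.87]$ you already derived during the descent, or one notes that $(\gamma_1-\beta_1^2)+(\gamma_2-\beta_2^2)$ would equal $\sqrt5$, which is not totally nonnegative). Once this case is added, the conclusion $\beta_1\in\{0,1\}$ stands and the rest of your argument goes through.
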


\begin{proof}
Note that $2x^2+2xy+(3+\sqrt{5})y^2$ is additively indecomposable in $\Q(\sqrt{5})$ \cite[Theorem~5.30]{TY}. Moreover, since $5$ is a prime, we can assume $\gcd(5,q)=1$; otherwise, we can just switch $q$ and $r$. 

First, from Lemma \ref{lem:decomp_biqua}, we know that $2$ can be decomposed only as $0+2$ and $1+1$, so the summands always lie in $\Q(\sqrt{5})$. Now, we will discuss the decompositions of $3+\sqrt{5}$. Let us suppose that
\[
3+\sqrt{5}=\frac{a_1}{4}+\frac{b_1}{4}\sqrt{5}+\frac{c_1}{4}\sqrt{q}+\frac{d_1}{4}\sqrt{5q}+\frac{a_2}{4}+\frac{b_2}{4}\sqrt{5}+\frac{c_2}{4}\sqrt{q}+\frac{d_2}{4}\sqrt{5q}
\]
for some $a_i,b_i,c_i,d_i\in \Z$ such that $\frac{a_i}{4}+\frac{b_i}{4}\sqrt{5}+\frac{c_i}{4}\sqrt{q}+\frac{d_i}{4}\sqrt{5q}\in \O_K$ for $i=1,2$. Then, by (\ref{eq:biqua_inequ}), if $\frac{a_i}{4}+\frac{b_i}{4}\sqrt{5}+\frac{c_i}{4}\sqrt{q}+\frac{d_i}{4}\sqrt{5q}$ is totally positive, then
\[
a_i>|b_i|\sqrt{5},\;|c_i|\sqrt{q},\;|d_i|\sqrt{5q}.
\]
If $d_i\neq 0$ for some $i$, then both $d_1$ and $d_2$ are non-zero, which gives
\[
12=a_1+a_2>|d_1|\sqrt{5q}+|d_2|\sqrt{5q}\geq 2\sqrt{5q}\geq 2\sqrt{65}>16,
\]
a contradiction. Therefore, $d_1=d_2=0$, and by the form of the integral basis, we have $a_i=2\overline{a_i}$, $b_i=2\overline{b_i}$, and $c_i=2\overline{c_i}$ for some $\overline{a_i},\overline{b_i},\overline{c_i}\in\Z$. Then, if $\overline{c_1}\neq 0$, in which case $\overline{c_2}\neq 0$, as before, we obtain
\[
6=\overline{a_1}+\overline{a_2}>|\overline{c_1}|\sqrt{q}+|\overline{c_2}|\sqrt{q}\geq 2\sqrt{13}>7,
\]
which is not possible. Thus, $\overline{c_1}=\overline{c_2}=0$, and all decompositions of $3+\sqrt{5}$ lie in $\Q(\sqrt{5})$.  

Now, to get a contradiction, let us suppose that
\[
2x^2+2xy+(3+\sqrt{5})y^2=Q_1(x,y)+Q_2(x,y)=(u_1x^2+2\beta_1 xy+\gamma_1 y^2)+(u_2x^2+2\beta_2 xy+\gamma_2 y^2)
\]
for some $u_1,u_2\in\{0,1,2\}$, $\beta_i=\frac{b_1^{(i)}}{4}+\frac{b_2^{(i)}}{4}\sqrt{5}+\frac{b_3^{(i)}}{4}\sqrt{q}+\frac{b_4^{(i)}}{4}\sqrt{5q}$ for $b_j^{(i)}\in\Z$ satisfying the corresponding congruence conditions, and $\gamma_i=\frac{c_1^{(i)}}{2}+\frac{c_2^{(i)}}{2}\sqrt{5}$ where $c_j^{(i)}\in\Z$ and $c_1^{(i)}\equiv c_2^{(i)}\;(\text{mod }2)$ for $i=1,2$. As before, we need to have $u_i\alpha_i-\beta_i^2\succeq 0$, and we will study the trace of these elements. In particular, to get $\frac{1}{4}\text{Tr}_{K/\Q}(\det(Q_1)+\frac{1}{4}\text{Tr}_{K/\Q}(\det(Q_2)\geq 0$, we need to have
\begin{multline} \label{eq:ineq5}
6\geq \frac{u_1c_1^{(1)}}{2}+\frac{u_2c_1^{(2)}}{2}\geq \frac{(b_1^{(1)})^2}{16}+\frac{5(b_2^{(1)})^2}{16}+\frac{q(b_3^{(1)})^2}{16}+\frac{5q(b_4^{(1)})^2}{16}\\+\frac{(b_1^{(2)})^2}{16}+\frac{5(b_2^{(2)})^2}{16}+\frac{q(b_3^{(2)})^2}{16}+\frac{5q(b_4^{(2)})^2}{16}
\end{multline}
If $b_4^{(1)}\neq 0$, and, thus, at the same time $b_4^{(2)}\neq 0$, the inequality in (\ref{eq:ineq5}) implies 
\[
6\geq \frac{5q}{8}\geq \frac{65}{8}>8, 
\] 
a contradiction. Thus, $b_4^{(1)}=b_4^{(2)}=0$, and $b_3^{(i)}=2\overline{b}_3^{(i)}$ for some $\overline{b}_3^{(i)}\in\Z$. Similarly, if $\overline{b}_3^{(i)}\neq 0$, (\ref{eq:ineq5}) gives
\[
6\geq \frac{q}{2}\geq \frac{13}{2}>6.
\] 
It follows that $\overline{b}_3^{(i)}=0$ for $i=1,2$. Thus, the only possible decompositions of $2x^2+2xy+(3+\sqrt{5})y^2$ are over $\Q(\sqrt{5})$, and since this form is additively indecomposable over this subfield, it completes the proof.  
\end{proof} 

\subsubsection{$\sqrt{13}\in K$}

In a similar manner, we will discuss biquadratic fields containing $\sqrt{13}$. Note that in fact, the $\Q(\sqrt{5},\sqrt{13})$ is covered by Proposition \ref{prop:sqrt5}, so we can assume that $q>13$. Again, we will consider a form additively indecomposable over $\Q(\sqrt{13})$. 

\begin{prop} \label{prop:sqrt13}
Let $q\equiv 1\;(\textup{mod }4)$, $q>13$ square-free. Then the quadratic form $2x^2+2\left(\frac{1}{2}+\frac{1}{2}\sqrt{13}\right)xy+3y^2$ is additively indecomposable in $\Q(\sqrt{13},\sqrt{q})$.
\end{prop}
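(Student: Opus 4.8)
The plan is to mimic the structure of the proof of Proposition~\ref{prop:sqrt5}, since the form $2x^2+2\left(\frac12+\frac12\sqrt{13}\right)xy+3y^2$ is of exactly the same flavour: it has small rational leading and trailing "anchors" ($2$ and $3$), both of which are indecomposable integers with very few decompositions controlled by Lemma~\ref{lem:decomp_biqua}, and the middle coefficient involves only $\sqrt{13}$. First I would record that the form is additively indecomposable over the quadratic subfield $\Q(\sqrt{13})$ (this should be one of the forms listed in \cite{TY}, e.g.\ a $\det=1$ form there, analogous to the role played by \cite[Theorem~5.30]{TY} in the $\sqrt5$ case), and note that since $13$ is prime we may assume $\gcd(13,q)=1$, swapping $q$ and $r$ otherwise.

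Next I would argue that \emph{every} decomposition $2x^2+2\beta xy+\gamma y^2=Q_1+Q_2$ into totally positive semi-definite classical forms must in fact take place over $\Q(\sqrt{13})$. Write $Q_i=u_ix^2+2\beta_i xy+\gamma_i y^2$ with $u_i\in\{0,1,2\}$, $\beta_i,\gamma_i\in\O_K$. By Lemma~\ref{lem:decomp_biqua}(1) the splitting $2=u_1+u_2$ forces $(u_1,u_2)\in\{(0,2),(1,1),(2,0)\}$, and the $u_i=0$ possibility is killed as at the end of the proof of Proposition~\ref{prop:ex_p_123} because then $\det(Q_i)=1-2\gamma_i$ with $\frac14\operatorname{Tr}_{K/\Q}(\gamma_i)\ge 1$. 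Likewise Lemma~\ref{lem:decomp_biqua}(2) constrains the decomposition of the constant term $3$: the exotic splitting $\frac{3+\sqrt5}{2}+\frac{3-\sqrt5}{2}$ only occurs if $\sqrt5\in K$, which would put us back in the situation of Proposition~\ref{prop:sqrt5}, so after excluding that we may take $\gamma_i\in\Z$, in fact $(\gamma_1,\gamma_2)$ a partition of $3$ into non-negative integers. The core of the argument is then the trace inequality: from $u_i\gamma_i\cdot[\text{coefficient}]-\beta_i^2\succeq 0$ one gets, writing $\beta_i=\frac{b_1^{(i)}}{4}+\frac{b_2^{(i)}}{4}\sqrt{13}+\frac{b_3^{(i)}}{4}\sqrt{q}+\frac{b_4^{(i)}}{4}\sqrt{r}$, a bound of the shape
\[
\text{(something)}\ \ge\ \sum_{i=1}^2\left(\frac{(b_1^{(i)})^2}{16}+\frac{13(b_2^{(i)})^2}{16}+\frac{q(b_3^{(i)})^2}{16}+\frac{r(b_4^{(i)})^2}{16}\right),
\]
where the left side is bounded by a constant (roughly $\operatorname{Tr}$ of $2\cdot 3$, i.e.\ something like $6$). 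Since $q>13$ and $r=13q/\gcd^2$ is even larger, any nonzero $b_3^{(i)}$ or $b_4^{(i)}$ — which by the integral-basis congruences come in even multiples, so have absolute value at least $2$ when nonzero — makes the right side at least $\frac{q}{4}>\frac{13}{4}$, overshooting the bound. Hence $b_3^{(i)}=b_4^{(i)}=0$, so $\beta_i\in\Q(\sqrt{13})$, and then forcing $\det(Q_i)\in\Z$ (again via Lemma~\ref{lem:decomp_biqua}) kills the $\sqrt q,\sqrt r$ parts of $\gamma_i$ as well; thus $Q_1,Q_2$ live over $\Q(\sqrt{13})$, contradicting indecomposability there.

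The main obstacle I anticipate is the bookkeeping at the boundary of the estimate: because the ambient constant on the left (around $6$) is only barely smaller than $q>13$ divided by the relevant denominator, one must be careful that the inequalities are genuinely strict, in particular handling the borderline cases where $|b_2^{(i)}|\le 1$ (the $\sqrt{13}$-direction, whose coefficient $13$ is \emph{not} automatically large compared to the constant) separately and using the precise integral-basis congruence conditions of Case (2)/(3) — which tie the parity of $b_3^{(i)},b_4^{(i)}$ to that of $b_2^{(i)}$ — to rule out the remaining small configurations, exactly as the analogous steps do in Propositions~\ref{prop:sqrt5} and \ref{prop:formq_23_1}. The only genuinely new input is verifying the base case that the displayed binary form is additively indecomposable over $\Q(\sqrt{13})$ itself, which I would cite from \cite{TY} rather than reprove.
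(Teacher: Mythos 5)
Your proposal follows essentially the same route as the paper's proof: reduce to decompositions with $u_i\in\{0,1,2\}$, $\gamma_i\in\{0,1,2,3\}$ via Lemma~\ref{lem:decomp_biqua}, bound $\sum_i\operatorname{Tr}_{K/\Q}(\det Q_i)$ to force $b_3^{(i)}=b_4^{(i)}=0$, and conclude by indecomposability over $\Q(\sqrt{13})$ (the paper cites \cite[Proof of Theorem 5.5]{TY} for this base case). Two small corrections to your estimate: the single-term bound $\frac{q(b_3^{(i)})^2}{16}\geq\frac{q}{4}$ does \emph{not} exceed the trace bound $6$ when $q=17$ or $21$; you must use that $b_3^{(1)}=-b_3^{(2)}$ (forced by the rational middle coefficient), so both terms contribute and the right-hand side is at least $\frac{q}{2}\geq\frac{17}{2}>6$. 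Also, in a Case (4) integral basis $b_4^{(i)}$ need not be even, but the weight $\frac{13q(b_4^{(i)})^2}{16}$ summed over $i$ already gives $\frac{13q}{8}\geq\frac{221}{8}>6$ with $|b_4^{(i)}|=1$, and only after $b_4^{(i)}=0$ does $b_3^{(i)}$ become even; your worry about the $b_2^{(i)}$ direction is moot since $\beta_i\in\Q(\sqrt{13})$ is all that is needed.
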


\begin{proof}
First, we can again suppose $\gcd(13,q)=1$.
By Lemma \ref{lem:decomp_biqua}, we see that $2$ and $3$ has only decompositions lying in $\Z$ (and thus in $\Q(\sqrt{13})$). To get a contradiction, let us suppose that
\[
2x^2+2\left(\frac{1}{2}+\frac{1}{2}\sqrt{13}\right)xy+3y^2=(u_1x^2+2\beta_1xy+v_1y^2)+(u_2x^2+2\beta_2xy+v_2y^2)
\]
where $u_i\in\{0,1,2\}$ and $v_i\in\{0,1,2,3\}$. Using the same expression for $\beta_i$ as in the proof of Proposition \ref{prop:sqrt5} and considering the sum of traces of determinants of the above forms, we get
\begin{multline} \label{eq:ineq13}
6\geq u_1v_1+u_2v_2\geq \frac{(b_1^{(1)})^2}{16}+\frac{13(b_2^{(1)})^2}{16}+\frac{q(b_3^{(1)})^2}{16}+\frac{13q(b_4^{(1)})^2}{16}\\+\frac{(b_1^{(2)})^2}{16}+\frac{13(b_2^{(2)})^2}{16}+\frac{q(b_3^{(2)})^2}{16}+\frac{13q(b_4^{(2)})^2}{16}. 
\end{multline}
Similarly as before, if $b_4^{(1)},b_4^{(2)}\neq 0$, we get
\[
6\geq \frac{13q}{8}\geq \frac{221}{8}>27,
\] 
which is impossible. Thus $b_4^{(1)}=b_4^{(2)}=0$, and both of coefficients $b_3^{(i)}$ are even. If they are also nonzero, (\ref{eq:ineq13}) implies
\[
6\geq \frac{q}{2}\geq \frac{17}{2}>8.
\]
Thus, $b_3^{(1)}=b_3^{(2)}=0$, and every decomposition of $2x^2+2\left(\frac{1}{2}+\frac{1}{2}\sqrt{13}\right)xy+3y^2$ is over $\Q(\sqrt{13})$, which is a contradiction with \cite[Proof of Theorem 5.5]{TY}.    
\end{proof}

\subsubsection{$\sqrt{17}\in K$}

Now, we will focus on the case $\sqrt{17}\in K$. Since the fields containing $\sqrt{5}$ and $\sqrt{13}$ were studied before, we can assume $q>17$.

\begin{prop} \label{prop:sqrt17}
Let $q\equiv 1\;(\textup{mod }4)$, $q>17$ square-free. Then the quadratic form $\left(\frac{5}{2}+\frac{1}{2}\sqrt{17}\right)x^2+2xy+\left(\frac{5}{2}-\frac{1}{2}\sqrt{17}\right)y^2$ is additively indecomposable in $\Q(\sqrt{17},\sqrt{q})$.
\end{prop}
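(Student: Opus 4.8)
The plan is to follow the template of Propositions~\ref{prop:sqrt5} and~\ref{prop:sqrt13}, though the argument should be shorter here, since the diagonal coefficients $\tfrac{5\pm\sqrt{17}}{2}$ are \emph{themselves} indecomposable integers in the quadratic subfield $\Q(\sqrt{17})$. First I would reduce, exactly as in those proofs, to the case $\gcd(17,q)=1$ (if $17\mid q$, interchange $q$ with $r=q/17$, and the subcases $r\in\{5,13\}$ are covered by Propositions~\ref{prop:sqrt5}--\ref{prop:sqrt13}); then $r=17q$, while $q>17$ with $q\equiv 17\equiv 1\pmod 4$ gives $q\ge 21$ and puts us in Case~(4). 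Writing $\alpha=\tfrac{5+\sqrt{17}}{2}$, $\bar\alpha=\tfrac{5-\sqrt{17}}{2}$, one has $\det(Q)=\alpha\bar\alpha-1=1$. I would then record that $Q$ is additively indecomposable over $\Q(\sqrt{17})$: by Proposition~\ref{prop:inde_from_inde} this reduces to checking that $\alpha$ and $\bar\alpha$ are indecomposable in $\O_{\Q(\sqrt{17})}$, which is a short trace count --- both have trace $5$, every totally positive integer of $\O_{\Q(\sqrt{17})}$ has trace at least $2$ (its norm being a positive integer), and no integer of $\O_{\Q(\sqrt{17})}$ of trace $3$ is totally positive (such an integer is $\tfrac{3+n\sqrt{17}}{2}$ with $n$ odd, of norm $\tfrac{9-17n^2}{4}<0$), so $\alpha,\bar\alpha$ cannot be sums of two totally positive integers.

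The core step is to prove that \emph{every} decomposition of $\alpha$ as a sum of two elements of $\O_K^{+}\cup\{0\}$ has both summands in $\Q(\sqrt{17})$; by Galois conjugation the same follows for $\bar\alpha$. Suppose $\alpha=\delta_1+\delta_2$ with $\delta_i\in\O_K^{+}\cup\{0\}$. If some $\delta_i=0$ there is nothing to prove, so assume both are totally positive and expand $\delta_i=\tfrac{a_i}{4}+\tfrac{b_i}{4}\sqrt{17}+\tfrac{c_i}{4}\sqrt q+\tfrac{d_i}{4}\sqrt{17q}$ in the Case~(4) integral basis. Then $a_1+a_2=10$, $c_1+c_2=d_1+d_2=0$, and (\ref{eq:biqua_inequ}) yields $a_i>|d_i|\sqrt{17q}$ and $a_i>|c_i|\sqrt q$. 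A nonzero $d_1$ (hence nonzero $d_2$) would give $10=a_1+a_2>2\sqrt{17q}\ge 2\sqrt{17\cdot 21}>10$, so $d_1=d_2=0$; in either integral basis of Case~(4) this forces $b_i,c_i$ even, say $c_i=2\overline{c}_i$, and a nonzero $\overline{c}_1$ (hence nonzero $\overline{c}_2$) would give $10=a_1+a_2>4\sqrt q\ge 4\sqrt{21}>10$. Hence $c_1=c_2=0$ and $\delta_i=\tfrac{a_i}{4}+\tfrac{b_i}{4}\sqrt{17}\in\Q(\sqrt{17})$.

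To conclude, suppose for contradiction $Q=Q_1+Q_2$ with $Q_i=u_ix^2+2\beta_ixy+\gamma_iy^2$ classical, totally positive semi-definite and nonzero; then $u_1+u_2=\alpha$, $\gamma_1+\gamma_2=\bar\alpha$, $\beta_1+\beta_2=1$ with $u_i,\gamma_i\in\O_K^{+}\cup\{0\}$. By the previous step $u_i,\gamma_i$ lie in $\O_{\Q(\sqrt{17})}$, and since $\alpha$ and $\bar\alpha$ are indecomposable there, exactly one of $u_1,u_2$ and exactly one of $\gamma_1,\gamma_2$ equals $0$. Say $u_k=0$. Then $Q_k$ is a totally positive semi-definite form whose Gram matrix has a zero diagonal entry, which (as in the proof of Proposition~\ref{prop:diam}) forces $\beta_k=0$, so $\beta_{3-k}=1$; since $Q_k\ne0$ we must have $\gamma_k\ne0$, so $\gamma_k=\bar\alpha$ and $\gamma_{3-k}=0$. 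But then the Gram matrix of $Q_{3-k}$ has diagonal $(\alpha,0)$ and off-diagonal entry $1$, hence determinant $-1<0$, contradicting its total positive semi-definiteness. So no such decomposition exists.

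I expect the only real computation to be in the middle step, namely the verification that when the $\sqrt{17q}$-coefficient vanishes the $\sqrt q$-coefficient becomes even (and the analogous congruence bookkeeping across the two subcases of Case~(4)); but the inequalities there are very loose --- they already fail for $q\ge 7$ --- so this is routine. The genuine content, and what makes this much shorter than the $\sqrt5$ and $\sqrt{13}$ cases, is the observation that $\tfrac{5\pm\sqrt{17}}{2}$ are already indecomposable over $\Q(\sqrt{17})$, so nothing remains once the decomposition is forced down to that subfield.
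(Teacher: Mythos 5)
Your proof is correct, but it takes a more self-contained route than the paper at the one step that actually requires work. The paper's proof is three lines: it observes that $\frac{5}{2}+\frac{1}{2}\sqrt{17}$ is indecomposable in $\Q(\sqrt{17})$, invokes Theorem \ref{thm:Man} with $17<q,r$ to conclude that it stays indecomposable in $\Q(\sqrt{17},\sqrt{q})$, and then applies Proposition \ref{prop:inde_from_inde} directly in the biquadratic field. You instead re-derive the needed special case of Man's theorem by hand: using (\ref{eq:biqua_inequ}) and the Case (4) integral basis you show that any decomposition of $\frac{5}{2}+\frac{1}{2}\sqrt{17}$ in $\O_K^{+}\cup\{0\}$ has both summands in $\Q(\sqrt{17})$, which together with your trace count gives indecomposability in $K$; your closing argument is then essentially the proof of Proposition \ref{prop:inde_from_inde} written out (a vanishing diagonal entry forces the corresponding off-diagonal entry to vanish, and the remaining Gram matrix has negative determinant). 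What the paper's approach buys is brevity; what yours buys is independence from \cite{Man} and an argument in the same explicit style as Propositions \ref{prop:sqrt5} and \ref{prop:sqrt13}, with comfortably loose inequalities. Your reduction for $\gcd(17,q)\neq 1$ (swap $q$ and $r$, with $r\in\{5,13\}$ landing in fields already treated by the earlier propositions) mirrors the paper's implicit assumption $17<q,r$; this is a shared scope restriction for the purposes of Theorem \ref{thm:main1}, not a gap in your argument.
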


\begin{proof}
First, note that $\frac{5}{2}+\frac{1}{2}\sqrt{17}$ (and thus also $\frac{5}{2}-\frac{1}{2}\sqrt{17}$) is indecomposable in $\Q(\sqrt{17})$, and therefore the form in the statement is additively indecomposable in $\Q(\sqrt{17})$ by Proposition~\ref{prop:inde_from_inde}. Since $17<q,r$, by Theorem \ref{thm:Man},  the element $\frac{5}{2}+\frac{1}{2}\sqrt{17}$ is also indecomposable in $\Q(\sqrt{17},\sqrt{q})$. Then the result follows again from Proposition \ref{prop:inde_from_inde}. 
\end{proof}

\subsubsection{$\sqrt{21}\in K$}

The last case we need to resolve is when $\sqrt{21}\in K$.

\begin{prop} \label{prop:sqrt21}
Let $q\equiv 1\;(\textup{mod }4)$, $q>21$ square-free, and let $r>21$. Then the quadratic form $2x^2+(3+\sqrt{21})xy+(5+\sqrt{21})y^2$ is additively indecomposable in $\Q(\sqrt{21},\sqrt{q})$.
\end{prop}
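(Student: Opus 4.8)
The plan is to mimic the strategy used in Propositions~\ref{prop:sqrt5} and~\ref{prop:sqrt13}: reduce every hypothetical decomposition of the binary form to a decomposition that lives entirely in the quadratic subfield $\Q(\sqrt{21})$, and then invoke the additive indecomposability of $Q(x,y)=2x^2+(3+\sqrt{21})xy+(5+\sqrt{21})y^2$ over $\O_{\Q(\sqrt{21})}$, which holds because $2$ and, say, $5+\sqrt{21}$ (note $21\equiv 1\pmod 4$, so the relevant bound on the coefficients is as in Proposition~\ref{prop:exi1} with $D=21$, or directly: $2$ and $5+\sqrt{21}$ being the outer coefficients with nonzero middle coefficient, after a suitable reduction) — in any case I would cite the corresponding statement from \cite{TY}. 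As usual we may assume $\gcd(21,q)=1$ (otherwise swap $q$ and $r$), so that $r=21q$ up to the square-free kernel, and in particular $\sqrt{r}=\sqrt{21q}$.

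First I would record the structure of a putative decomposition $Q=Q_1+Q_2$ with $Q_i=u_ix^2+2\beta_ixy+\gamma_iy^2$, $u_i\in\N_0$, $\gamma_i\in\O_K^+\cup\{0\}$. Since $2$ decomposes in a real biquadratic field only as $0+2$ or $1+1$ (Lemma~\ref{lem:decomp_biqua}(1)), the values $u_1,u_2\in\{0,1,2\}$; the case $u_i=0$ is excluded as in the earlier proofs (the corresponding $\beta_i$ and $\gamma_i$ must then vanish, forcing $Q_i=0$). Writing $\beta_i=\tfrac{b_1^{(i)}}{2}+\tfrac{b_2^{(i)}}{2}\sqrt{21}+\tfrac{b_3^{(i)}}{2}\sqrt{q}+\tfrac{b_4^{(i)}}{2}\sqrt{r}$ with integer $b_j^{(i)}$ subject to the congruence conditions of the relevant integral basis, and similarly $\gamma_i$, I would compute $\tfrac14\mathrm{Tr}_{K/\Q}(\det Q_1+\det Q_2)\ge 0$. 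The linear part contributes a bounded amount: from the explicit coefficients of $Q$ one gets $\tfrac14\mathrm{Tr}_{K/\Q}$ of the ``$u_i\gamma_i$'' terms bounded by roughly $5+\sqrt{21}<10$, say by an explicit constant $C\le 20$ (I would pin down the exact bound in the write-up). The trace inequality then reads $C\ge \sum_i\big(\tfrac{(b_1^{(i)})^2}{4}+\tfrac{(b_2^{(i)})^2\cdot 21}{4}+\tfrac{(b_3^{(i)})^2 q}{4}+\tfrac{(b_4^{(i)})^2 r}{4}\big)$ (up to the $1/4$ vs.\ $1/16$ normalisation depending on which integral basis Case~(4a)/(4b) we are in — this must be tracked carefully).

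From this inequality the $\sqrt{q}$ and $\sqrt{r}$ coefficients are killed: since $b_3^{(1)}=-b_3^{(2)}$ and $b_4^{(1)}=-b_4^{(2)}$ (the $\sqrt q,\sqrt r$ parts of $\beta_1+\beta_2$ vanish because $Q$ has no such terms), a nonzero $b_4^{(i)}$ forces $C\ge \tfrac{r}{2}\ge \tfrac{21q}{2}>21$, impossible for $q\ge 2$; hence $b_4^{(1)}=b_4^{(2)}=0$, and then by the basis congruences $b_3^{(i)}$ is even, so if nonzero $C\ge \tfrac{4q}{4}\cdot\tfrac12\cdot 2 = q>21$ (again adjusting the constant to the actual $C$), forcing $b_3^{(1)}=b_3^{(2)}=0$. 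At that point $\beta_i\in\Q(\sqrt{21})$; examining $\det Q_i$ and using that $\det Q\in\{1,2,3\}$ (so by Lemma~\ref{lem:decomp_biqua} the summands of $\det Q$ lie in $\Q(\sqrt{21})$, indeed in $\Z$ after the trace bookkeeping) one gets, since $u_i\ne 0$, that the $\sqrt q$ and $\sqrt r$ parts of $\gamma_i$ vanish too, so $\gamma_i\in\Q(\sqrt{21})$ and thus $Q_1,Q_2$ are forms over $\Q(\sqrt{21})$, contradicting additive indecomposability of $Q$ there.

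The main obstacle I expect is bookkeeping rather than conceptual: the form $Q$ here has an \emph{odd} middle coefficient contribution (the $\sqrt{21}$ in $3+\sqrt{21}$) and the constant term $5+\sqrt{21}$ is not of the normalized shape used in Proposition~\ref{prop:exi1}, so I must first double-check that $Q$ really is totally positive definite (compute $\det Q = 2(5+\sqrt{21})-\tfrac{(3+\sqrt{21})^2}{4}$; one should verify this is a totally positive unit or at least totally positive with small trace) and that it is additively indecomposable over $\Q(\sqrt{21})$ — most cleanly by checking its outer coefficients $2$ and $5+\sqrt{21}$ are indecomposable in $\O_{\Q(\sqrt{21})}$ (using the known description of indecomposables there) and applying Proposition~\ref{prop:inde_from_inde}, or by citing the relevant computation from \cite{TY}. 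The second delicate point is getting the numerics in the trace inequality to close for \emph{all} $q>21$ with $q\equiv 1\pmod 4$ and $r>21$ simultaneously, and making sure the integral-basis case distinction (4a) vs.\ (4b) — which changes denominators from $2$ to $4$ and imposes different parity constraints on the $b_j^{(i)}$ — is handled uniformly; the margins above ($>21$ vs.\ a constant $\le 20$) are comfortable, so I expect this to go through, but the parity analysis of which $b_j^{(i)}$ are forced even must be done separately in each subcase exactly as in the proof of Proposition~\ref{prop:111}.
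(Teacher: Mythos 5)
Your overall strategy (kill the $\sqrt{q}$ and $\sqrt{r}$ components via trace/positivity estimates and reduce to indecomposability over $\Q(\sqrt{21})$) is the right one and is what the paper does, but there are three concrete gaps. The most serious is the opening reduction: since $21=3\cdot 7$ is composite, you cannot assume $\gcd(21,q)=1$ by swapping $q$ and $r$ — if $\gcd(21,q)=3$ then $\gcd(21,r)=7$, so the best you can arrange is $\gcd(21,q)\in\{1,3\}$. In the case $\gcd(21,q)=3$ one has $q=3s$, $r=7s$, and $r$ can be as small as $77$ (namely $s=11$, the field $\Q(\sqrt{21},\sqrt{33})$). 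There your ``comfortable margin'' evaporates: the relevant bound becomes $20>2\sqrt{77}\approx 17.5$, which is \emph{true}, so the positivity argument does not exclude $\sqrt{r}$-components of the $\gamma_i$. The paper has to treat $\Q(\sqrt{21},\sqrt{33})$ separately with a completely different form, $(6+\sqrt{33})x^2+2xy+(6-\sqrt{33})y^2$, whose indecomposability comes from Theorem \ref{thm:Man} and Proposition \ref{prop:inde_from_inde}. Your plan as written simply misses this exceptional field.

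Two further steps do not survive scrutiny. First, $\det Q=2(5+\sqrt{21})-\tfrac{(3+\sqrt{21})^2}{4}=\tfrac{5+\sqrt{21}}{2}$, a totally positive unit — it is \emph{not} in $\{1,2,3\}$, so you cannot invoke Lemma \ref{lem:decomp_biqua} on $\det Q$ to force $\gamma_i\in\Q(\sqrt{21})$. The paper instead analyses the decomposition $5+\sqrt{21}=\gamma_1+\gamma_2$ directly via the inequality \eqref{eq:biqua_inequ} ($c_1^{(1)}+c_1^{(2)}=20>(|c_4^{(1)}|+|c_4^{(2)}|)\sqrt{r}$, etc.), and only afterwards runs the trace-of-determinants estimate on the $\beta_i$. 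Second, your ``cleanest'' route to indecomposability over $\Q(\sqrt{21})$ via Proposition \ref{prop:inde_from_inde} fails: neither outer coefficient is indecomposable there, since $2=1+1$ and $5+\sqrt{21}=\tfrac{5+\sqrt{21}}{2}+\tfrac{5+\sqrt{21}}{2}$ with $\tfrac{5+\sqrt{21}}{2}$ a totally positive unit. Your fallback of citing the relevant result of \cite{TY} (Theorem 5.35 there) is the correct move and is what the paper does. With the $\gcd$ case split, the exceptional field $\Q(\sqrt{21},\sqrt{33})$ handled separately, and the determinant step replaced by the direct positivity argument, your outline would match the paper's proof.
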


\begin{proof}
Compared to the previous cases, not necessarily $\gcd(21,q)=1$, but we can suppose $\gcd(21,q)\in\{1,3\}$. Note that the quadratic form $2x^2+\left(3+\sqrt{21}\right)xy+(5+\sqrt{21})y^2$ is additively indecomposable in $\Q(\sqrt{21})$ by \cite[Theorem 5.35]{TY}.

As before, $2$ has only decompositions with integer summands. Suppose that
\[
2x^2+(3+\sqrt{21})xy+(5+\sqrt{21})y^2=(u_1x^2+2\beta_1 xy+\gamma_1 y^2)+(u_2x^2+2\beta_2 xy+\gamma_2 y^2)
\]
for some $u_i\in\{0,1,2\}$, $\beta_i\in\O_K$ and $\gamma_i\in\O_K^{+}\cup\{0\}$. Now, we will study elements $\gamma_i=\frac{c_1^{(i)}}{4}+\frac{c_2^{(i)}}{4}\sqrt{p}+\frac{c_3^{(i)}}{4}\sqrt{q}+\frac{c_4^{(i)}}{4}\sqrt{r}$ for $c_j^{(i)}\in\Z$ such that $\gamma_i\in\O_K$. Let us suppose that $c_4^{(1)}=-c_4^{(2)}\neq 0$. First, assume $\gcd(p,q)=1$. Then (\ref{eq:biqua_inequ}) gives
\[
20\geq |c_4^{(i)}|\sqrt{r}\geq \sqrt{21\cdot 29}>24.
\]
Now, let $\gcd(21,q)=3$, i.e., we can suppose $q=3s$ and $r=7s$ where $s\equiv 3\;(\text{mod }4)$ is square-free positive integer such that $\gcd(21,s)=1$ and, moreover, suppose first that $q>11$. Then (\ref{eq:biqua_inequ}) implies
\[
20= c_1^{(1)}+c_1^{(2)}>(|c_4^{(1)}|+|c_4^{(2)}|)\sqrt{r}\geq 2\sqrt{7\cdot 19}>23,
\]
a contradiction.
Therefore, if $s\neq 11$, we have $c_4^{(1)}=c_4^{(2)}=0$. For the field $\Q(\sqrt{21},\sqrt{33})$, we can consider the quadratic form
\[
\left(5+2\frac{1+\sqrt{33}}{2}\right)x^2+2xy+\left(5+2\frac{1-\sqrt{33}}{2}\right)y^2,
\]
which is totally positive definite.
The element $5+2\frac{1+\sqrt{33}}{2}$ is indecomposable in $\Q(\sqrt{33})$, and, thus, by Theorem \ref{thm:Man}, is also indecomposable in $\Q(\sqrt{21},\sqrt{33})$. Then, the indecomposability of the above form follows from Proposition \ref{prop:inde_from_inde}.

So, in the rest of the proof, we can assume that $K\neq \Q(\sqrt{21},\sqrt{33})$. If $c_4^{(1)}=c_4^{(2)}=0$, then the coefficients $c_3^{(i)}$ are even. If $c_3^{(1)}=-c_3^{(2)}\neq 0$ and $\gcd(21,q)=1$, then
\[
20= c_1^{(1)}+c_1^{(2)}>(|c_3^{(1)}|+|c_3^{(2)}|)\sqrt{q}\geq 4\sqrt{29}>21.
\]
If $\gcd(21,q)=3$, we similarly get
\[
20\geq c_1^{(1)}+c_1^{(2)}>(|c_3^{(1)}|+|c_3^{(2)}|)\sqrt{q}\geq 4\sqrt{3\cdot 19}>30.
\]
Therefore, $c_3^{(1)}=c_3^{(2)}=0$, and $\gamma_i\in\Q(\sqrt{21})$. Let us denote $\gamma_i=\frac{\overline{c}_1^{(i)}}{2}+\frac{\overline{c}_2^{(i)}}{2}\sqrt{21}$ where coefficients $\overline{c}_j^{(i)}\in\Z$ are such that $\gamma_i\in\O_K$ for $i=1,2$.

Similarly as in the proof of Proposition \ref{prop:sqrt5}, we have
\begin{multline} \label{eq:ineq21}
10\geq \frac{u_1\overline{c}_1^{(1)}}{2}+\frac{u_2\overline{c}_1^{(2)}}{2}\geq \frac{(b_1^{(1)})^2}{16}+\frac{21(b_2^{(1)})^2}{16}+\frac{q(b_3^{(1)})^2}{16}+\frac{r(b_4^{(1)})^2}{16}\\+\frac{(b_1^{(2)})^2}{16}+\frac{21(b_2^{(2)})^2}{16}+\frac{q(b_3^{(2)})^2}{16}+\frac{r(b_4^{(2)})^2}{16},
\end{multline} 
where we use the same expression for $\beta_i$ as before. Considering the type of our quadratic form, we also have $b_3^{(1)}=-b_3^{(2)}$ and $b_4^{(1)}=-b_4^{(2)}$. From the previous part, we know that $r\geq \min\{609,133\}=133$ and $q\geq \min\{29,57\}=29$. If $b_4^{(1)}\neq 0$. Then (\ref{eq:ineq21}) implies
\[
10\geq \frac{r(b_4^{(1)})^2}{16}+\frac{r(b_4^{(2)})^2}{16}\geq\frac{133}{8}>16.
\]
Thus, as before, $b_4^{(1)}=b_4^{(2)}=0$, giving that $b_3^{(1)}$ and $b_3^{(2)}$ are even. If $b_3^{(1)}\neq 0$, (\ref{eq:ineq21}) gives
\[
10\geq \frac{q(b_3^{(1)})^2}{16}+\frac{q(b_3^{(2)})^2}{16}\geq\frac{29}{2}>14,
\]
from which $b_3^{(1)}=b_3^{(2)}=0$ follows. Thus, $\beta_i\in\Q(\sqrt{21})$, and since $2x^2+(3+\sqrt{21})xy+(5+\sqrt{21})y^2$ is additively indecomposable over $\Q(\sqrt{21})$, the proof is completed.  
\end{proof}

\subsection{Proof of Theorem \ref{thm:main1}}

Now, we are able to prove Theorem \ref{thm:main1} from the introduction.

\begin{theorem}[= Theorem \ref{thm:main1}] 
In every real biquadratic field, there exists a classical, additively indecomposable quadratic form in $2$ variables.
\end{theorem}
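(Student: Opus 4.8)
The strategy is to build the desired binary form from forms that are already known to be additively indecomposable in one of the three quadratic subfields $\Q(\sqrt p)$, $\Q(\sqrt q)$, $\Q(\sqrt r)$, and then to check that indecomposability is preserved when we pass to the biquadratic field $K$. The main tool is Proposition \ref{prop:inde_from_inde}: if $Q(x,y)=\alpha x^2+\beta xy+\gamma y^2$ is totally positive definite over $\O_K$, $\beta\neq 0$, and both $\alpha$ and $\gamma$ are indecomposable in $\O_K$, then $Q$ is additively indecomposable over $\O_K$. So it suffices to exhibit, in each case of the classification of integral bases, a totally positive definite classical binary form whose two diagonal coefficients remain indecomposable in $\O_K$.

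First I would dispose of the generic situation. After possibly permuting $p,q,r$ we may assume $K$ falls under one of Cases (1)--(4), and moreover (by Theorem \ref{thm:Man}, reordering so that the relevant subfield discriminant is the smallest among $p,q,r$) that the indecomposable integers of at least one suitable quadratic subfield $\Q(\sqrt D)$ remain indecomposable in $K$. Then I take the appropriate form from Propositions \ref{prop:exi3}, \ref{prop:exi2}, or \ref{prop:exi1}, according to the residue of $D$ modulo $4$ (or modulo $12$): its leading coefficient is $2$, $3$, or $4$, and its constant coefficient is an indecomposable integer of $\Q(\sqrt D)$. One still has to verify two things: that this form is totally positive \emph{definite} as a form over $\O_K$ (it has determinant $1$, $2$, or $3$, and is positive definite under every real embedding of $K$, each of which restricts to one of the two embeddings of $\Q(\sqrt D)$, so this is immediate), and that the leading coefficient $2$, $3$, or $4$ is itself indecomposable in $\O_K$. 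The latter is exactly where Lemma \ref{lem:decomp_biqua} enters: it lists all decompositions of $2$, $3$, $4$ in any real biquadratic field, and none of these is a sum of two totally positive integers (the summands $\tfrac{3\pm\sqrt5}{2}$, $2\pm\sqrt2$, etc., are not totally positive), so $2$, $3$, $4$ are indecomposable in every real biquadratic field; hence the leading coefficient is indecomposable, and Proposition \ref{prop:inde_from_inde} applies.

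There remain the exceptional configurations in which Theorem \ref{thm:Man} does \emph{not} guarantee that any subfield's indecomposables survive, and the configurations where the subfield $\Q(\sqrt D)$ we would want has $D\le 17$ (so Proposition \ref{prop:exi1} is unavailable) or where all three of $p,q,r$ are small. These finitely-many exceptional fields — handled in Subsection \ref{subsec:special} — must be treated individually: for each one I would either find an explicit indecomposable integer of $\O_K$ of small trace and pair two such integers via Proposition \ref{prop:inde_from_inde}, or directly write down a candidate form and rule out every possible nontrivial decomposition by the trace/norm monotonicity of $\succeq$ together with the coefficient inequalities \eqref{eq:biqua_inequ} for totally positive elements of $K$.

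The main obstacle is precisely this last step: away from the exceptional list everything is a bookkeeping exercise combining Theorem \ref{thm:Man}, Propositions \ref{prop:exi3}--\ref{prop:exi1}, and Lemma \ref{lem:decomp_biqua}; but the exceptional biquadratic fields have small units and small indecomposables, so the clean ``indecomposable $\times$ indecomposable'' construction can fail and one is forced into an ad hoc case analysis, checking totally positive definiteness and indecomposability of explicit forms by hand in each field.
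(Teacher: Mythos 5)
Your overall architecture (generic cases via forms inherited from quadratic subfields, plus a finite list of exceptional fields treated ad hoc) matches the paper's, but the mechanism you propose for the generic case is broken, and it is precisely the step that carries all the work.

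You claim that Lemma \ref{lem:decomp_biqua} shows $2$, $3$, $4$ are indecomposable in every real biquadratic field because ``none of the listed decompositions is a sum of two totally positive integers.'' That is false: the lemma explicitly lists $2=1+1$, $3=1+2$, $4=2+2=1+3$, and $1,2,3$ are totally positive. So the leading coefficients of the forms in Propositions \ref{prop:exi3}--\ref{prop:exi1} are always decomposable. Likewise, the constant coefficients of those forms are not in general indecomposable: for $D\equiv 3\ (\mathrm{mod}\ 4)$ the constant term $\frac{D+1}{2}$ is a rational integer $\geq 2$, hence splits as $1+\bigl(\frac{D+1}{2}-1\bigr)$; for $D\equiv 2\ (\mathrm{mod}\ 4)$ the element $\frac{D}{2}+1+\sqrt{D}$ has norm $\frac{D^2}{4}+1$, far above the norm bound for indecomposables in $\Q(\sqrt{D})$. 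Consequently Proposition \ref{prop:inde_from_inde} simply does not apply to these forms, and your ``indecomposable $\times$ indecomposable'' construction collapses for the generic case. (Those forms are proved indecomposable in \cite{TY} by determinant arguments, not by Proposition 5.10 there.)

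What the paper actually does is different and cannot be skipped: Lemma \ref{lem:decomp_biqua} is used only to constrain how the leading coefficient can split (into rational integers), and then one writes a putative decomposition $Q=Q_1+Q_2$ with unknown coefficients $\beta_i,\gamma_i\in\O_K$ and uses $\det(Q)\succeq\det(Q_1)+\det(Q_2)$ together with trace estimates on $\det(Q_i)$ to force all coefficients of $Q_1,Q_2$ to lie in the quadratic subfield $\Q(\sqrt{D})$, where additive indecomposability is already known. This is the content of Propositions \ref{prop:ex_p_123}, \ref{prop:formq_23_1} and \ref{prop:111}, and it is genuinely quantitative (it is why the hypotheses $21<p$ etc.\ appear and why $p\in\{5,13,17,21\}$ must be handled separately). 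Your description of the exceptional cases is closer to the mark --- the $\sqrt{17}$ case really is handled by pairing an indecomposable with itself via Theorem \ref{thm:Man} and Proposition \ref{prop:inde_from_inde} --- but as written your proposal contains no valid argument for the main body of fields.
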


\begin{proof}
To show this result, we use quadratic forms additively indecomposable in quadratic subfields. In particular, regarding the integral bases, Cases (1)--(3) are discussed in Proposition \ref{prop:ex_p_123}, and Case (4) in Proposition \ref{prop:111}, with some exceptions. The remaining cases are resolved in Proposition~\ref{prop:sqrt5} ($\sqrt{5}\in K$), Proposition~\ref{prop:sqrt13} ($\sqrt{13}\in K$), Proposition~\ref{prop:sqrt17} ($\sqrt{17}\in K$) and Proposition~\ref{prop:sqrt21} ($\sqrt{21}\in K$). 
\end{proof}

\section{The simplest cubic fields} \label{sec:simplest}

In this section, we will study additively indecomposable quadratic forms for one family of cubic fields, the so-called simplest cubic fields. They are of the form $\Q(\rho)$ where $\rho$ is the largest root of the polynomial $x^3-ax^2-(a+3)x-1$ with $a\in\Z$, $a\geq -1$ \cite{Sh}. They possess many useful properties; more concretely, they are Galois, and all their totally positive units are squares. Moreover, for the conjugates of $\rho$, it holds that $a+1<\rho$, $-2<\rho'<-1$ and $-1<\rho''<0$. In \cite{KT}, the authors found all indecomposable integers in $\Z[\rho]$:

\begin{theorem}[{\cite[Theorem 1.2]{KT}}]
Let $K$ be the simplest cubic field with
$a\in\Z_{\geq -1}$ such that $\O_K=\Z[\rho]$. 
The elements $1$, $1+\rho+\rho^2$, and $-v-w\rho+(v+1)\rho^2$ where $0\leq v\leq a$ and $v(a+2)+1\leq w\leq (v+1)(a+1)$ are, up to multiplication by totally positive units, all the indecomposable integers in $\Q(\rho)$.
\end{theorem}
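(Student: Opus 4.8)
The plan is to establish the classification in two directions: first that each listed element is indecomposable, and second that every indecomposable integer is a totally positive unit times one of the listed elements. Throughout I would write a general $\beta\in\O_K=\Z[\rho]$ as $\beta=x+y\rho+z\rho^2$ with $x,y,z\in\Z$, and I would begin by sharpening the crude localisation of the conjugates. With $f(t)=t^3-at^2-(a+3)t-1$, the identities $f(a+1)=-2a-3<0$ and $f(a+2)=a^2+3a+1>0$ give $a+1<\rho<a+2$, and then the Galois relations $\rho''=-1/(\rho+1)$, $\rho'=-(\rho+1)/\rho$ give $\rho'\in\bigl(-1-\tfrac{2}{a+1},\,-1\bigr)$ and $\rho''\in\bigl(-\tfrac{1}{a+2},\,0\bigr)$. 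Two arithmetic facts keep everything finite: $\rho$ and $\rho+1$ are units, since $N(\rho)=-f(0)=1$ and $N(\rho+1)=-f(-1)=-1$; and, as recorded in the excerpt, every totally positive unit of $K$ is a square, so the totally positive units form exactly the rank-$2$ lattice $\langle\rho^2,(\rho+1)^2\rangle$ acting on the positive cone.

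For the first direction I would argue by polytope emptiness. Fix $\alpha$ to be $1$, $1+\rho+\rho^2$, or $\alpha_{v,w}:=-v-w\rho+(v+1)\rho^2$ with $0\le v\le a$ and $v(a+2)+1\le w\le(v+1)(a+1)$. If $\alpha$ decomposed there would be $\beta=x+y\rho+z\rho^2$ with $0\prec\beta\prec\alpha$, that is, with $0<\sigma_i(\beta)<\sigma_i(\alpha)$ for $i=1,2,3$; substituting the conjugate values turns these six inequalities into a bounded polytope $P_\alpha\subseteq\R^3$, and the task is to verify that $P_\alpha\cap\Z^3$ consists only of the coefficient vectors of $0$ and of $\alpha$, uniformly in $a$ and in $v,w$. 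The case $\alpha=1$ is immediate; for the others the check is finite and is cleanest if one eliminates $z$ first --- the leading coefficient is confined to a short interval around $v+1$ by the combination of the three inequalities --- then $y$, then $x$. Along the way I would also record that $\alpha_{v,w}$ is totally positive: the three conditions $\sigma_i(\alpha_{v,w})>0$ reproduce, to leading order in $a$, precisely the range $v(a+2)+1\le w\le(v+1)(a+1)$, which both confirms the hypothesis and explains where that range comes from.

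For the second direction I would reduce to a fundamental domain. Given an indecomposable $\gamma\in\O_K^+$, multiply it by a suitable totally positive unit $\rho^{2m}(\rho+1)^{2n}$ --- which preserves indecomposability --- to move it into a fixed fundamental domain $F$ for the action of $\langle\rho^2,(\rho+1)^2\rangle$, chosen so that every element of $F$ has all three conjugates within a bounded ratio of $N(\gamma)^{1/3}$. Indecomposability forces $\min_i\sigma_i(\gamma)\le 1$, for otherwise $\gamma-1\in\O_K^+$ and $\gamma=1+(\gamma-1)$ is a decomposition; on $F$ this bounds $N(\gamma)$, hence the conjugates of $\gamma$, hence the coefficients $x,y,z$ to an explicit region. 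For each $\gamma$ in that region I would either exhibit $0\prec\beta\prec\gamma$ (showing $\gamma$ decomposable) or identify $\gamma$ with a listed element. Locating witnesses $\beta$ is manageable because $\O_K^+\cup\{0\}$ is generated as an additive monoid by the indecomposables (induct on the trace), so one may restrict to witnesses among $\{1,\,1+\rho+\rho^2\}\cup\{\alpha_{v,w}\}$ and their unit translates: $\beta=1$ clears every $\gamma$ all of whose conjugates exceed $1$, and the remaining $\gamma$ --- those with a conjugate $\le 1$, necessarily compensated by a large conjugate since $N(\gamma)\ge 1$ --- are cleared by subtracting the $\alpha_{v,w}$ whose small conjugate sits in the matching embedding.

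The hard part will be making the second direction genuinely uniform in $a$ rather than a finite check for each value: the domain $F$ and the coefficient bounds it yields grow with $a$ (the fundamental units have $\log|\rho|\approx\log a$), so the number of lattice points to inspect grows, and one must follow how the feasible set depends on $a$ and show the indecomposables stabilise into exactly the stated triangular family. I expect the tidiest organisation is to stratify by the leading coefficient $z=v+1\in\{0,1,\dots,a+1\}$: the strata $z=0$ and $z=1$ yield only $1$ and $1+\rho+\rho^2$ up to totally positive units; for each $v$ with $1\le v+1\le a+1$ one pins down the exact interval of $w$ for which $-v-w\rho+(v+1)\rho^2$ is totally positive and lies in $F$, obtaining $v(a+2)+1\le w\le(v+1)(a+1)$; and any integer whose shape falls outside this pattern is shown to be either not totally positive or decomposable by a polytope-emptiness argument of the kind used in the first direction.
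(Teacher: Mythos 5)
First, a point of comparison: the paper does not prove this statement at all --- it is imported verbatim as \cite[Theorem 1.2]{KT} --- so there is no in-paper proof to measure your argument against. Judged on its own, your proposal is a sensible outline in the same general spirit as the known proof (reduce modulo the totally positive units $\langle\rho^2,(\rho+1)^2\rangle$, bound the candidates, decide indecomposability coefficientwise), and your preliminary facts (the localisation of $\rho,\rho',\rho''$, that $\rho$ and $\rho+1$ are units, that the totally positive units are exactly the squares of units) are all correct. But as it stands it is a plan with its two hard steps left open, and one of them hides a real missing idea rather than mere bookkeeping.

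The gap in the forward direction is that you reduce indecomposability of $\alpha(v,w)=-v-w\rho+(v+1)\rho^2$ to emptiness of the open polytope $\{\beta:0\prec\beta\prec\alpha(v,w)\}$ and call the verification ``finite.'' It is finite only for each fixed $(a,v,w)$; the theorem quantifies over all $a\ge-1$ and all $O(a^2)$ pairs $(v,w)$, the polytopes grow with $a$, and successive elimination of $z$, $y$, $x$ does not by itself yield a uniform emptiness proof --- no closing inequality is exhibited. The standard way to make this step genuinely uniform is the codifferent trace criterion: produce a totally positive $\delta\in\O_K^{\vee}$ with $\text{Tr}_{K/\Q}(\alpha(v,w)\delta)=1$; since any decomposition $\alpha=\beta+\gamma$ into totally positive integers would force $\text{Tr}_{K/\Q}(\alpha\delta)\ge 2$, indecomposability follows at once for every $a,v,w$. (The relevant element $\tilde\delta$ from \cite[Section 5.1]{KT} is even quoted and used elsewhere in the present paper, in the proof of Proposition \ref{prop:3alpha}.) Your proposal contains neither this nor any substitute uniform argument. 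The completeness direction has the gap you yourself flag: on a fundamental domain for $\langle\rho^2,(\rho+1)^2\rangle$ the inequality $\min_i\sigma_i(\gamma)\le 1$ bounds $N(\gamma)$ only by a constant growing polynomially in $a$ (the fundamental units have logarithm $\approx\log a$, so the domain is correspondingly skew), hence the candidate coefficient region is not of bounded size; the proposed stratification by $z=v+1$ is stated as a goal, but no inequalities are derived showing that within each stratum exactly the interval $v(a+2)+1\le w\le(v+1)(a+1)$ survives and that every other totally positive element dominates a listed one. Until both steps are supplied with explicit, $a$-uniform inequalities, this is an outline of the known strategy rather than a proof.
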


If we denote the set of elements $\alpha(v,w)=-v-w\rho+(v+1)\rho^2$ with $0\leq v\leq a$ and $v(a+2)+1\leq w\leq (v+1)(a+1)$ by $\bt$, we can define two transformations $T_1,T_2:\bt\rightarrow\bt$ in the following way:
\begin{multline*}
T_1(\alpha(v,v(a+2)+W+1))=(\alpha(v,v(a+2)+W+1))'(\rho')^{-2}\\=\alpha(W,W(a+2)+a-v-W+1)
\end{multline*}
and
\begin{multline*}
T_2(\alpha(v,v(a+2)+W+1))=(\alpha(v,v(a+2)+W+1))''\rho^{2}\\=\alpha(a-v-W,(a-v-W)(a+2)+v+1)
\end{multline*}
where $W=w-v(a+2)-1$, which implies $0\leq W\leq a-v$.

In most of this section, we will consider binary quadratic forms of a particular type. For them, we need to consider decompositions of numbers $2$ and $3$.

\begin{lemma} \label{lem:dec23_simplest}
Let $K$ be a simplest cubic field. Then $2$ can only be decomposed as $0+2$ and $1+1$ in $\Z[\rho]^+\cup\{0\}$. Similarly, $3$ can only be decomposed as $0+3$ and $1+2$ in $\Z[\rho]^+\cup\{0\}$. 
\end{lemma}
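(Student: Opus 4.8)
The plan is to use the explicit information about the conjugates of $\rho$ in the simplest cubic field, namely $a+1<\rho$, $-2<\rho'<-1$, and $-1<\rho''<0$, to control which totally positive integers can appear as a summand of $2$ or of $3$. Write a candidate summand as $\alpha=x+y\rho+z\rho^2\in\Z[\rho]$. The key observation is that any $\alpha\in\Z[\rho]^+$ has all three conjugates positive, so in particular $\operatorname{Tr}_{K/\Q}(\alpha)\geq 1$ with equality only for $\alpha=1$, and $N_{K/\Q}(\alpha)\geq 1$. If $\alpha$ and $\beta=n-\alpha$ (with $n\in\{2,3\}$) are both in $\Z[\rho]^+\cup\{0\}$ and nonzero, then $\operatorname{Tr}(\alpha)+\operatorname{Tr}(\beta)=n\operatorname{Tr}(1)=3n$, so both traces are small; for $n=2$ we get $\operatorname{Tr}(\alpha),\operatorname{Tr}(\beta)\in\{1,\dots,5\}$ summing to $6$, and for $n=3$ they sum to $9$.

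The main step is to rule out any genuinely cubic summand, i.e. to show $\alpha\in\Z$. First I would note that $\alpha,\,n-\alpha\in\O_K^+$ forces, via the embeddings, the three scalar inequalities $0<\sigma_i(\alpha)<n$ for $i=1,2,3$. I would then translate these into bounds on $y$ and $z$: using $\sigma_1(\alpha)=x+y\rho+z\rho^2$ with $\rho>a+1$ large, together with $\sigma_2(\alpha),\sigma_3(\alpha)$ where $\rho',\rho''$ are bounded in $(-2,0)$, the condition that all three lie in the bounded interval $(0,n)$ pins $z$ to a very short range (essentially $z\in\{0\}$, or a couple of exceptional small values). Concretely, $\sigma_1(\alpha)-\sigma_2(\alpha)=y(\rho-\rho')+z(\rho^2-(\rho')^2)$; since $\rho-\rho'$ and $\rho^2-(\rho')^2$ grow with $a$ while the left side is bounded by $n$, this forces $y$ and $z$ to be $0$ once $a$ is not too small, and the remaining small-$a$ cases ($a=-1,0$ and perhaps $1$) are handled by a direct finite check of the finitely many integers of bounded trace. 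Once $z=0$ and $y=0$ we have $\alpha=x\in\Z$, and then $\alpha\in\{0,1,2\}$ for $n=2$ and $\alpha\in\{0,1,2,3\}$ for $n=3$ is immediate, giving exactly the claimed decompositions $0+2$, $1+1$ and $0+3$, $1+2$.

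I expect the main obstacle to be the bookkeeping for small values of $a$, where the conjugate bounds are not yet strong enough to kill the $y\rho+z\rho^2$ term by a clean inequality; there one must verify by hand (or cite an analogous computation, e.g. from \cite{KT}) that the only totally positive integers $\alpha$ with $\alpha\preceq 3$ are the rational ones $1,2,3$. A secondary subtlety is making sure the argument is uniform: I would phrase the generic bound so that it applies for all $a\geq a_0$ for some explicit small $a_0$, and enumerate the finitely many fields with $a<a_0$ separately. The totally-positive-unit normalization is not needed here since we are bounding by a fixed small rational integer, so the list of indecomposables is not directly invoked; what is used is only the location of the conjugates of $\rho$ and the additivity of trace and the positivity of all conjugates of a totally positive integer.
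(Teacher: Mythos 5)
Your route is genuinely different from the paper's. The paper reduces the case of $2$ to that of $3$, checks $-1\leq a\leq 6$ by computer, and for $a\geq 7$ argues entirely with norms: the superadditivity $\sqrt[3]{N(3)}\geq\sqrt[3]{N(\alpha_1)}+\sqrt[3]{N(\alpha_2)}$ forces $N(\alpha_i)\leq 8$, the Lemmermeyer--Peth\H{o} bound $N(\alpha)\geq 2a+3$ for elements not associated to rational integers then forces each $\alpha_i$ to be $n_i\varepsilon_i$ with $n_i\in\{1,2\}$ and $\varepsilon_i$ a totally positive unit, and the fact that a nontrivial totally positive unit has a conjugate exceeding $a^2$ kills $\varepsilon_i\neq 1$. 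Your plan instead bounds the coefficients of a summand $\alpha=x+y\rho+z\rho^2$ directly from the box condition $0<\sigma_i(\alpha)<n$; this is more elementary in that it uses only the location of the conjugates of $\rho$, at the cost of more bookkeeping. One small correction that actually helps you: since $\operatorname{Tr}(1)=3$, a totally positive integer has trace at least $3$ (AM--GM against $N\geq 1$), not at least $1$; equality forces all conjugates equal to $1$, i.e.\ $\alpha=1$. For $n=2$ this alone finishes the lemma, since $\operatorname{Tr}(\alpha)+\operatorname{Tr}(\beta)=6$ forces both traces to equal $3$.

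There is, however, a genuine gap in your central step for $n=3$. From the single inequality $\lvert y(\rho-\rho')+z(\rho^2-(\rho')^2)\rvert<n$ you cannot conclude $y=z=0$ merely because the two coefficients grow with $a$: one bounded linear form in two integer unknowns with large coefficients has many small solutions, namely all $(y,z)$ with $y$ close to $-z(\rho+\rho')\approx -za$. To pin down $y$ and $z$ you must combine two independent differences, e.g.\ first deduce from $\lvert(\rho-\rho')(y+z(\rho+\rho'))\rvert<n$ that $y+za$ is forced to be $0$ (using $\rho+\rho'=a-\rho''$ and that $y+za\in\Z$, valid once $\lvert z\rvert$ is suitably bounded), and then feed $y=-za$ into $\sigma_1-\sigma_3$ or into $\lvert(\rho''-\rho')(y+z(\rho''+\rho'))\rvert<2n$, where $\rho''+\rho'\approx -1$, to get $\lvert z\rvert(a+1)\ll 1$ and hence $z=0$. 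Equivalently, one is using that the covolume of $\Z[\rho]$ under the conjugate embedding is large compared with the box $(0,n)^3$. This is the actual content of the proof and is missing as written; it is repairable, but the estimate you cite does not suffice on its own. You would also still need the finite verification for small $a$ (the paper needs $a\leq 6$; your constants would dictate your own threshold), so neither approach avoids that step.
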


\begin{proof}
Since the first part follows from the second, we only prove the result for $3$. For $-1\leq a\leq 6$, we can use a computer program to verify the statement. The code we used is available at https://sites.google.com/view/tinkovamagdalena/codes. Therefore, we can assume $a\geq 7$. 

Let us suppose that $3=\alpha_1+\alpha_2$ where $\alpha_1,\alpha_2\in\Z[\rho]^+$. Then, by \cite[Lemma 2.1]{KY1}, we have
\[
3=\sqrt[3]{N_{K/\Q}(3)}\geq \sqrt[3]{N_{K/\Q}(\alpha_1)}+\sqrt[3]{N_{K/\Q}(\alpha_2)}.
\]
That implies $1\leq N_{K/\Q}(\alpha_i)\leq 8$. However, we know that if $\alpha_i$ is not associated with a rational integer, then $N_{K/\Q}(\alpha_i)\geq 2a+3>8$ \cite[Theorem 1]{LP}. Thus, $\alpha_1$ and $\alpha_2$ are associated with rational integers, giving $N_{K/\Q}(\alpha_i)\in\{1,8\}$. 

Easily, we can write $\alpha_i=n_i\varepsilon_i$, where $n_i\in\{1,2\}$ and $\varepsilon_i\in\Z[\rho]^+$ is a unit. However, for $a\geq 7$, if $\varepsilon_i\neq 1$, then there exists an embedding $\sigma$ of $K$ into $\C$ such that $\sigma(\varepsilon_i)>a^2\geq 49$ \cite[Lemma 6.2]{KT}. That gives
\[
\sigma(3)=3>n_i\sigma(\alpha_i)>49,
\]
which is impossible. Thus, we have $\alpha_1,\alpha_2\in\Z$, which completes the proof.   
\end{proof}

That allows us to construct some additively indecomposable binary quadratic forms in $K$.

\begin{prop} \label{prop:form_of_form23}
Let $\rho$ be as above.
\begin{enumerate}
\item If $\alpha$ is indecomposable integer in $\Z[\rho]$ and $2\alpha\succ 1$, then $2x^2+2xy+\alpha y^2$ is additively indecomposable over $\Z[\rho]$. 
\item If $\alpha$ is indecomposable integer in $\Z[\rho]$, $3\alpha\succ 1$ and $2\alpha\not\succeq 1$, then $3x^2+2xy+\alpha y^2$ is additively indecomposable over $\Z[\rho]$.
\end{enumerate}
\end{prop}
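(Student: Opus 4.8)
The strategy mirrors the argument behind Proposition~\ref{prop:ex_p_123} and its analogues in the biquadratic setting: exploit the fact that the leading coefficient ($2$ or $3$) has only the trivial decompositions listed in Lemma~\ref{lem:dec23_simplest}, and that the indecomposable integer $\alpha$ has only the trivial decomposition $\alpha + 0$. Suppose for contradiction that $Q = Q_1 + Q_2$ with $Q_1, Q_2$ totally positive semi-definite, nonzero, and classical. Writing $Q_i(x,y) = u_i x^2 + 2\beta_i xy + \gamma_i y^2$, the coefficients of $x^2$ satisfy $u_1 + u_2 = 2$ (resp.\ $3$), and by Lemma~\ref{lem:dec23_simplest} the only options are $\{u_1,u_2\} = \{1,1\}$ or $\{0,2\}$ (resp.\ $\{1,2\}$ or $\{0,3\}$), while the coefficients of $y^2$ satisfy $\gamma_1 + \gamma_2 = \alpha$, forcing $\{\gamma_1,\gamma_2\} = \{\alpha, 0\}$ since $\alpha$ is indecomposable.

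First I would dispose of the cases where some $u_i = 0$: if, say, $u_2 = 0$, then the principal $x$-row of $M_{Q_2}$ is $(0, \beta_2)$, so total positive semi-definiteness of $Q_2$ forces $\beta_2 = 0$, hence $Q_2 = \gamma_2 y^2$; then $\gamma_2 \ne 0$ gives $\gamma_2 = \alpha$ and $\gamma_1 = 0$, and repeating the argument on $Q_1$ (whose $y$-row is now $(\beta_1, 0)$) forces $\beta_1 = 0$ and hence $Q_1 = u_1 x^2$, contradicting $\beta := $ (the coefficient of $xy$ in $Q$) $= \beta_1 + \beta_2 = 0$ whereas $Q$ has $xy$-coefficient $2$. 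So in both parts we may assume $u_1, u_2 \ne 0$, and after relabelling, $\gamma_1 = \alpha$, $\gamma_2 = 0$. Again the $y$-row of $M_{Q_2}$ is $(\beta_2, 0)$, so $\beta_2 = 0$ by semi-definiteness of $Q_2$, whence $\beta_1 = \beta$ where $\beta$ is $1$ in both statements of the proposition. Thus the only surviving decomposition is $Q_1(x,y) = u_1 x^2 + 2xy + \alpha y^2$ and $Q_2(x,y) = u_2 x^2$, with $(u_1,u_2)$ running over the pairs permitted above.

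It then remains to check that $Q_1$ is not totally positive semi-definite for the relevant values of $u_1$. In part~(1), $u_1 = 1$ is the only possibility (since $u_1 = 0$ is already excluded), so $\det(Q_1) = \alpha - 1$; the hypothesis $2\alpha \succ 1$ only gives $\alpha \succ \tfrac12$, which is weaker than $\alpha \succeq 1$ — here I would instead argue directly that $M_{Q_1} = \left(\begin{smallmatrix} 1 & 1 \\ 1 & \alpha \end{smallmatrix}\right)$ being totally positive semi-definite would force $\sigma_i(\alpha) \ge 1$ for every embedding $\sigma_i$, i.e.\ $\alpha \succeq 1$, so $\alpha - 1 \in \O_K^+ \cup \{0\}$; but $\alpha = (\alpha-1) + 1$ would then be a decomposition of the indecomposable $\alpha$ unless $\alpha - 1 = 0$, and $\alpha = 1$ contradicts $2\alpha \not\succeq 1$ (wait — in part (1) the hypothesis is $2\alpha \succ 1$, which $\alpha=1$ satisfies). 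For $\alpha = 1$ one has $Q = 2x^2 + 2xy + y^2$, which actually \emph{does} decompose as $x^2 + (x^2 + 2xy + y^2) = x^2 + (x+y)^2$; so the genuine hypothesis must be used to exclude $\alpha$ being a unit times $1$, and I expect the intended reading of $2\alpha \succ 1$ together with indecomposability of $\alpha$ (so $\alpha$ is one of the listed forms, none of which is $\succeq 1$ unless $a$ is small) is what rules this out — \textbf{this delicate boundary case is the main obstacle} and I would handle it by invoking the explicit list of indecomposables from \cite[Theorem 1.2]{KT}: none of $1+\rho+\rho^2$ or $\alpha(v,w)$ satisfies $\alpha \succeq 1$, while $\alpha \sim 1$ is excluded because then $2\alpha \succ 1$ would fail for the relevant normalization. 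In part~(2), $(u_1,u_2) \in \{(1,2),(2,1)\}$, so $\det(Q_1) = u_1\alpha - 1$; if $u_1 = 2$ the hypothesis $2\alpha \not\succeq 1$ says exactly that $2\alpha - 1 \notin \O_K^+\cup\{0\}$, so $Q_1$ with $u_1=2$ fails semi-definiteness immediately; if $u_1 = 1$ then $\det(Q_1) = \alpha - 1$, and since $3\alpha \succ 1$ forces $\alpha \succ \tfrac13$ but $2\alpha \not\succeq 1$ forbids $\alpha \succeq \tfrac12$, the matrix $\left(\begin{smallmatrix}1&1\\1&\alpha\end{smallmatrix}\right)$ cannot be totally positive semi-definite (which needs $\alpha \succeq 1$). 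This contradiction completes the proof; I would write out the trace-positivity / embedding estimates only to the extent needed to pin down these boundary cases.
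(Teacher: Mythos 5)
Your proposal follows the same route as the paper's proof: use Lemma~\ref{lem:dec23_simplest} and the indecomposability of $\alpha$ to reduce any putative decomposition to $Q=(u_1x^2+2xy+\alpha y^2)+(u_2x^2)$, dispose of $u_i=0$ via the standard fact that a totally positive semi-definite Gram matrix with a zero diagonal entry has zero off-diagonal entries in that row, and then kill the surviving piece by showing its determinant $u_1\alpha-1$ is not totally nonnegative. Your treatment of part~(2) is complete and correct: $u_1=2$ is excluded directly by $2\alpha\not\succeq 1$, and $u_1=1$ because $2\alpha\not\succeq 1$ already implies $\alpha\not\succeq 1$.

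For part~(1) you have correctly identified something the paper glosses over: $\alpha=1$ satisfies both hypotheses ($1$ is indecomposable and $2\cdot 1\succ 1$), yet $2x^2+2xy+y^2=x^2+(x+y)^2$ is a sum of two nonzero classical totally positive semi-definite forms, so the statement is false as written and must implicitly exclude $\alpha=1$ (the paper's proof, which dismisses (1) with ``can be shown in the same way,'' never meets this case; in all of its applications $\alpha$ is $1+\rho+\rho^2$ or an element of $\bt$, hence $\neq 1$). Your own resolution of this boundary case is, however, more complicated than necessary and not quite closed: you reach for the explicit classification of indecomposables from \cite{KT}, and your remark that ``$2\alpha\succ 1$ would fail for the relevant normalization'' does not apply to $\alpha=1$ itself. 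The clean finish, which you in fact write down earlier in the same sentence, is purely formal: if $\alpha\neq 1$ is indecomposable and $\alpha\succeq 1$, then $\alpha=(\alpha-1)+1$ with $\alpha-1\in\O_K^+$ decomposes $\alpha$, a contradiction; hence $\alpha-1=\det(Q_1)\not\succeq 0$ and $Q_1$ is not totally positive semi-definite. With the hypothesis strengthened to $\alpha\neq 1$, your argument is a complete and correct proof, and is in substance identical to the paper's.
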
 

\begin{proof}
We will only prove (2) since (1) can be shown in the same way. Let us take $\alpha$ as specified in (2). Then, clearly, $3x^2+2xy+\alpha y^2$ is totally positive definite. By Lemma \ref{lem:dec23_simplest}, the only possible decompositions of the form $3x^2+2xy+\alpha y^2$ can be of the form
\[
3x^2+2xy+\alpha y^2=ux^2+2xy+\alpha y^2+(3-u)x^2
\] 
where $u\in\{1,2\}$. However, $ux^2+2xy+\alpha y^2$ is never totally positive semi-definite for these cases of $u$. That completes the proof.    
\end{proof} 

Note that in the above proof, we only use the fact that we know the decompositions of $2$ and $3$ in the simplest cubic fields; otherwise, it is independent of the choice of the field. Thus, it passes whenever, in a field, the only decompositions of $2$ and $3$ are of this form.   

In the simplest cubic fields, for example, as $\alpha$, we can take $\alpha=1+\rho+\rho^2$ or $\alpha\in\bt$. In the latter case, using the transformations $T_1$ and $T_2$ defined above, we can create more additively indecomposable quadratic forms over $\Z[\rho]$. Concretely, if $ux^2+2xy+\alpha y^2$ is additively indecomposable for $u\in\{2,3\}$ and $\alpha\in\bt$, then $u(\rho')^{-2}x^2+2(\rho')^{-2}xy+(\rho')^{-2}\alpha'y^2$ and $u\rho^{2}x^2+2\rho^{2}xy+\rho^{2}\alpha''y^2$ are also additively indecomposable.

\subsection{Existence}

In this subsection, we will prove the existence of at least one additively indecomposable quadratic form in $2$ and $3$ variables over $\Z[\rho]$.

\subsubsection{Binary quadratic forms}

We will start with binary forms, for which we use a type considered in Proposition \ref{prop:form_of_form23}(1) and, as a suitable indecomposable integer, we will take $\alpha=1+\rho+\rho^2$.

\begin{prop} \label{prop:simplest_ex2}
The quadratic form $2x^2+2xy+(1+\rho+\rho^2)y^2$ is additively indecomposable over $\Z[\rho]$.
\end{prop}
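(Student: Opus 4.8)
The plan is to invoke Proposition~\ref{prop:form_of_form23}(1) with the choice $\alpha = 1+\rho+\rho^2$. Two things must be checked: that $\alpha$ is an indecomposable integer in $\Z[\rho]$, and that $2\alpha \succ 1$, i.e.\ $2\alpha - 1 = 1 + 2\rho + 2\rho^2 \in \Z[\rho]^+$. The first of these is immediate, since $1+\rho+\rho^2$ is listed among the indecomposable integers in the theorem of \cite{KT} quoted above. So the real content is the total positivity of $2\alpha-1$.

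To verify $2\alpha - 1 \succ 0$, I would evaluate $1 + 2\rho + 2\rho^2$ at each of the three embeddings and use the known localization of the conjugates of $\rho$: namely $a+1 < \rho$ (so in particular $\rho > 0$), $-2 < \rho' < -1$, and $-1 < \rho'' < 0$. At the identity embedding, $1 + 2\rho + 2\rho^2 > 0$ is clear since $\rho > 0$. At the embedding sending $\rho \mapsto \rho''$, we have $\rho'' \in (-1,0)$, so $1 + 2\rho'' + 2(\rho'')^2 = 1 + 2\rho''(1+\rho'')$; since $\rho''(1+\rho'') \in (-1/4, 0)$ (the product of a number in $(-1,0)$ with a number in $(0,1)$, minimized at $\rho''=-1/2$), this is $> 1 - 1/2 = 1/2 > 0$. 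At the embedding sending $\rho \mapsto \rho'$, we have $\rho' \in (-2,-1)$; here I would complete the square as $1 + 2\rho' + 2(\rho')^2 = 2(\rho' + \tfrac12)^2 + \tfrac12 > 0$, which holds for every real $\rho'$ with no further restriction needed. Hence $2\alpha - 1$ is totally positive, so $2\alpha \succ 1$, and in particular $2x^2 + 2xy + \alpha y^2$ is totally positive definite.

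Having checked the hypotheses, Proposition~\ref{prop:form_of_form23}(1) directly gives that $2x^2 + 2xy + (1+\rho+\rho^2)y^2$ is additively indecomposable over $\Z[\rho]$, completing the proof.

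I do not expect any genuine obstacle here: the argument is a short verification of hypotheses. The only mildly delicate point is the sign check at the conjugate embeddings, and even that is routine once one writes the relevant quadratic in $\rho$ as $2(\rho + \tfrac12)^2 + \tfrac12$ (for the $\rho'$ place) or bounds $\rho''(1+\rho'')$ on $(-1,0)$ (for the $\rho''$ place). Alternatively, one could simply note $2\alpha - 1 = 1 + 2\rho + 2\rho^2$ and observe that for \emph{any} totally real $\rho$ the polynomial $2t^2 + 2t + 1$ has discriminant $4 - 8 < 0$, hence is positive at every real argument, which dispenses with the case analysis entirely and makes the total positivity of $2\alpha-1$ completely automatic.
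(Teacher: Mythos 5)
Your proposal is correct and takes essentially the same route as the paper: both reduce to checking $2(1+\rho+\rho^2)\succ 1$ and then observe that the relevant real quadratic ($2t^2+2t+1$, equivalently the paper's $g(t)=2t^2+2t+2>1$) is positive at every real argument, so the condition holds at all three embeddings at once. Your per-embedding case analysis is redundant given your closing discriminant observation, but nothing is missing.
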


\begin{proof}
By Proposition \ref{prop:form_of_form23}(1), the form $2x^2+2xy+(1+\rho+\rho^2)y^2$ is additively indecomposable if $2(1+\rho+\rho^2)\succ 1$. To prove that, let us consider the polynomial $g(x)=2x^2+2x+2$. The function $g$ attains its minimum for $x=-\frac{1}{2}$, for which $g\left(-\frac{1}{2}\right)=\frac{3}{2}>1$. That means that $g(x)>1$ for all $x\in\R$, including $x=\rho,\rho',\rho''$, which implies $2(1+\rho+\rho^2)\succ 1$. 
\end{proof}

\subsubsection{Ternary quadratic forms}

Now, we will show that the element $1+\rho+\rho^2$ can also be used to show that there exists an additively indecomposable quadratic form in $3$ variables in $\Q(\rho)$.

\begin{prop} \label{prop:simplest_ex3}
The quadratic form $Q(x,y,z)=(1+\rho+\rho^2)x^2+(1+\rho'+\rho'^2)y^2+(1+\rho''+\rho''^2)z^2+2xy+2yz$ is additively indecomposable over $\Z[\rho]$.
\end{prop}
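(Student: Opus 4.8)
The plan is to mimic the structure of Proposition \ref{prop:diam}: the quadratic form in the statement is exactly of the "diamond/path" shape $\sum_i \alpha_i x_i^2 + \sum_i \beta_{i,i+1} x_i x_{i+1}$ with $\alpha_1 = 1+\rho+\rho^2$, $\alpha_2 = 1+\rho'+\rho'^2$, $\alpha_3 = 1+\rho''+\rho''^2$ and all off-diagonal coefficients equal to $2 \neq 0$. So if I can check that each $\alpha_i$ is an indecomposable integer in $\Z[\rho]$ and that $Q$ is totally positive definite, then Proposition \ref{prop:diam} applies verbatim and gives additive indecomposability.

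**Key steps.** First I would verify that the three diagonal coefficients are indecomposable integers in $\Z[\rho]$. The element $1+\rho+\rho^2$ is on the list of indecomposable integers from \cite[Theorem 1.2]{KT} quoted above. Since $\Q(\rho)$ is Galois, its nontrivial automorphisms send $\rho \mapsto \rho'$ and $\rho \mapsto \rho''$, hence send the indecomposable integer $1+\rho+\rho^2$ to $1+\rho'+\rho'^2$ and $1+\rho''+\rho''^2$ respectively; indecomposability is preserved by field automorphisms, so all three $\alpha_i$ are indecomposable integers in $\Z[\rho]$. Second, I would check that $Q$ is totally positive definite. For each embedding $\sigma$ of $K$, $\sigma(Q)$ is a real ternary quadratic form whose diagonal entries are the three numbers $1+t+t^2$ for $t \in \{\rho,\rho',\rho''\}$ (permuted according to $\sigma$) and whose off-diagonal pattern is the tridiagonal one with $1$'s in positions $(1,2)$ and $(2,3)$. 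Note $1+t+t^2 = (t+\tfrac12)^2 + \tfrac34 \geq \tfrac34$ for every real $t$, and using the bounds $a+1 < \rho$, $-2 < \rho' < -1$, $-1 < \rho'' < 0$ one gets explicit lower bounds on each conjugate ($1+\rho+\rho^2$ is large, $1+\rho'+\rho'^2 \in (1,3)$, $1+\rho''+\rho''^2 \in (\tfrac34,1)$). Positive definiteness of the $3\times 3$ Gram matrix then follows from Sylvester's criterion: the leading principal minors are $\alpha_{\sigma(1)} > 0$, $\alpha_{\sigma(1)}\alpha_{\sigma(2)} - 1 > 0$ (since the product of any two of the three values exceeds $1$, as the smallest two values $1+\rho''+\rho''^2$ and $1+\rho'+\rho'^2$ already multiply to more than $\tfrac34 \cdot 1$... here one must be a little careful and check $(1+\rho''+\rho''^2)(1+\rho'+\rho'^2) > 1$ directly, e.g. via a one-variable estimate), and $\det(\sigma(Q)) = \alpha_{\sigma(1)}\alpha_{\sigma(2)}\alpha_{\sigma(3)} - \alpha_{\sigma(1)} - \alpha_{\sigma(3)} > 0$, which I would reduce to the single algebraic identity for $\det Q$ computed from $M_Q$ and then bound below using the conjugate inequalities. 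Third, invoke Proposition \ref{prop:diam} to conclude.

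**Main obstacle.** The only nontrivial analytic point is verifying total positive definiteness, i.e. that $\det(\sigma(Q)) > 0$ and the $2\times 2$ minor is positive for every embedding $\sigma$. Because the three diagonal values are asymmetric (one large, two close to $1$), the delicate case is the embedding where the two small values $1+\rho'+\rho'^2$ and $1+\rho''+\rho''^2$ sit in the tridiagonal positions adjacent to the off-diagonal $1$'s; there one needs $(1+\rho'+\rho'^2)(1+\rho''+\rho''^2) > 1$ and $\det(\sigma(Q)) > 0$, which are true but require using the sharp conjugate bounds rather than the crude $\geq \tfrac34$ bound. I expect this to be a short explicit computation: express $\det Q$ symmetrically in $\rho,\rho',\rho''$, rewrite it using the elementary symmetric functions $\rho+\rho'+\rho'' = a$, $\rho\rho'+\rho\rho''+\rho'\rho'' = -(a+3)$, $\rho\rho'\rho'' = 1$ coming from $x^3-ax^2-(a+3)x-1$, obtain $\det Q$ as an explicit polynomial in $a$, and check it is positive for all $a \geq -1$ (with the handful of small values $a \in \{-1,0,1,\dots\}$ checked directly and a clean asymptotic bound for large $a$). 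Everything else is bookkeeping.
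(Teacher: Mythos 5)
Your proposal follows essentially the same route as the paper: both reduce the statement to Proposition \ref{prop:diam} (the diagonal entries are the Galois conjugates of the indecomposable integer $1+\rho+\rho^2$, hence themselves indecomposable) and then verify total positive definiteness via Sylvester's criterion. One small correction to your final step: $\det(Q)$ is not symmetric in $\rho,\rho',\rho''$ and does not reduce to a polynomial in $a$; the symmetric-function bookkeeping instead gives the exact identity $\det(Q)=N_{K/\Q}(1+\rho+\rho^2)-\mathrm{Tr}_{K/\Q}(1+\rho+\rho^2)+(1+\rho'+\rho'^2)=1+\rho'+\rho'^2\succ 0$, and the paper settles the delicate $2\times 2$ minor $(1+\rho+\rho^2)(1+\rho'+\rho'^2)-1$ by exhibiting its minimal polynomial and applying Descartes' rule of signs rather than by numerical conjugate bounds.
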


\begin{proof}
If $Q$ is totally positive definite, it is additively indecomposable by Proposition \ref{prop:diam}. By Sylvester's criterion, we need to check if all leading principal minors are totally positive. The first one is $1+\rho+\rho^2$, which is totally positive. The second one is $(1+\rho+\rho^2)(1+\rho'+\rho'^2)-1$, which is a root of 
\[
x^3-(2a^2+6a+15)x^2+(a^4+6a^3+23a^2+42a+48)x-(2a^2+6a+17),
\]
i.e., this principal minor is totally positive, e.g., by Descartes' sign rule. Finally, for the determinant of $Q$, we get
\begin{align*}
\det(Q)&=(1+\rho+\rho^2)(1+\rho'+\rho'^2)(1+\rho''+\rho''^2)-(1+\rho+\rho^2)-(1+\rho''+\rho''^2)\\
&=N_ {K/\Q}(1+\rho+\rho^2)-\text{Tr}_{K/\Q}(1+\rho+\rho^2)+1+\rho'+\rho'^2\\
&=a^2+3a+9-(a^2+3a+9)+1+\rho'+\rho'^2\\
&=1+\rho'+\rho'^2\succ 0.
\end{align*}
Therefore, $Q$ is totally positive definite, which completes the proof.
\end{proof}

\begin{proof}[Proof of Theorem \ref{thm:main_simplest_existence}]
The proof of Theorem \ref{thm:main_simplest_existence} follows from Propositions \ref{prop:simplest_ex2} and \ref{prop:simplest_ex3}. 
\end{proof}

\subsection{Lower bound}

In this section, we will use forms from Proposition \ref{prop:form_of_form23} to find a lower bound on the number of non-equivalent classical, additively indecomposable quadratic forms over $\Z[\rho]$. For that, we need to find suitable elements $\alpha\in\bt$ for which conditions in the first or second part of Proposition \ref{prop:form_of_form23} are satisfied. 

\begin{prop} \label{prop:2alpha}
Let $\alpha(v,w)\in\bt$. If $0\leq v\leq \frac{a-1}{2}$ and $\frac{(2v+1)(a+2)+1}{2}\leq w\leq (v+1)(a+1)$, then $2\alpha(v,w)\succ 1$. 
\end{prop}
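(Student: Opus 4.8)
The plan is to verify directly that $2\alpha(v,w) - 1$ is totally positive, i.e.\ that its image under all three embeddings $\sigma_i$ of $K$ is positive. Recall $\alpha(v,w) = -v - w\rho + (v+1)\rho^2$, so $2\alpha(v,w) - 1 = -(2v+1) - 2w\rho + 2(v+1)\rho^2$. Evaluating at each conjugate of $\rho$, I want to show that the quadratic-in-$\rho$ expression $f(t) := 2(v+1)t^2 - 2wt - (2v+1)$ is positive at $t = \rho, \rho', \rho''$. Since $v+1 > 0$, $f$ is an upward parabola, so it suffices to control its values at the endpoints of the intervals containing $\rho, \rho', \rho''$; by the stated bounds on $\rho$'s conjugates, $\rho > a+1$, $\rho' \in (-2,-1)$, and $\rho'' \in (-1,0)$.

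The key steps, in order: First, for $\sigma_1$, use monotonicity of $f$ on $[a+1,\infty)$ (its vertex is at $t = w/(2(v+1)) \le (a+1)/2 < a+1$, using $w \le (v+1)(a+1)$), so $f(\rho) > f(a+1)$; then check $f(a+1) = 2(v+1)(a+1)^2 - 2w(a+1) - (2v+1) > 0$. Here the worst case is $w$ as large as allowed, $w = (v+1)(a+1)$, and one computes $f(a+1) \ge 2(v+1)(a+1)^2 - 2(v+1)(a+1)^2 - (2v+1)$, which is negative — so instead I should be more careful: plug in $w = (v+1)(a+1)$ exactly and note $f(a+1) = -(2v+1) < 0$ is wrong because the two $w$-dependent terms do not cancel unless $w$ hits that bound; for $w$ strictly below it the positive term dominates. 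The cleaner route is to observe $f(\rho) = 2(v+1)\rho^2 - 2w\rho - (2v+1)$ and use $\rho > a+1 \ge w/(v+1)$ (this last inequality is exactly $w \le (v+1)(a+1) < (v+1)\rho$), hence $2(v+1)\rho^2 - 2w\rho = 2\rho\big((v+1)\rho - w\big) > 0$, so $f(\rho) > -(2v+1)$ — which is still not obviously positive, so one must quantify $(v+1)\rho - w$ away from zero using $\rho > a+1$ and the gap between $w$ and $(v+1)(a+1)$. This endpoint bookkeeping at $\sigma_1$ is where the real care is needed.

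Second, for $\sigma_2$ (at $\rho' \in (-2,-1)$): since $\rho' < 0$, the term $-2w\rho' > 0$ because $w > 0$; and $2(v+1)\rho'^2 > 0$; so $f(\rho') > -(2v+1) + 2w|\rho'| > -(2v+1) + 2w$. Using the hypothesis $w \ge \tfrac{(2v+1)(a+2)+1}{2}$, we get $2w \ge (2v+1)(a+2)+1 \ge (2v+1)+1 > 2v+1$, hence $f(\rho') > 0$. The same argument works verbatim at $\sigma_3$ with $\rho'' \in (-1,0)$, since there too $\rho'' < 0$ and $|\rho''|$ contributes a positive term (one only needs $\rho'' \ne 0$ and the same lower bound on $w$). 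So $\sigma_2$ and $\sigma_3$ are easy and follow from the lower bound on $w$.

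The main obstacle, then, is the $\sigma_1$ estimate: showing $2(v+1)\rho^2 - 2w\rho - (2v+1) > 0$ when $w$ is close to its upper bound $(v+1)(a+1)$. The plan there is to write $w = (v+1)(a+1) - \delta$ with $\delta \ge 0$, expand using $\rho > a+1$, and show the resulting expression $2(v+1)\rho(\rho - (a+1)) + 2\delta\rho - (2v+1)$ is positive; since $\rho - (a+1) > 0$ and $\rho > a+1 > 0$, even when $\delta = 0$ one uses that $2(v+1)\rho(\rho-(a+1))$ is bounded below by a positive quantity depending on how far $\rho$ exceeds $a+1$. One can make this rigorous by recalling from $x^3 - ax^2 - (a+3)x - 1$ that $\rho(\rho - a) \cdot \rho = (a+3)\rho + 1 > (a+3)(a+1)$, from which a concrete lower bound on $\rho - (a+1)$ (and on $\rho^2 - (a+1)\rho$) follows; comparing that bound to $2v+1 \le a$ (from $v \le \tfrac{a-1}{2}$) closes the case. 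This polynomial identity linking $\rho$, $a$ is the crucial ingredient that I expect to do the heavy lifting in the hardest step.
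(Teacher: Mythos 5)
Your overall strategy (check positivity of $f(t)=2(v+1)t^2-2wt-(2v+1)$ at the three conjugates of $\rho$) could in principle be carried out, but as written it has a genuine gap at $\sigma_3$, and $\sigma_1$ is only a plan. At $\rho''$ your inequality $f(\rho'')>-(2v+1)+2w|\rho''|\ge -(2v+1)+2w$ is false: since $\rho\rho'\rho''=1$, $\rho>a+1$ and $|\rho'|>1$, one has $|\rho''|<1/(a+1)$, so $2w|\rho''|$ is nowhere near $2w$; it is of size roughly $2w/(a+2)\approx 2v+1$. This is in fact the embedding where the precise lower bound $w\ge\frac{(2v+1)(a+2)+1}{2}$ (note the factor $a+2$) is doing the work, and making it rigorous requires a lower bound on $|\rho''|$ of the form $|\rho''|>1/((a+2)+\text{small})$ --- not just $\rho''\in(-1,0)$. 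The argument is genuinely tight there, just as it is at $\sigma_1$, where (as you correctly sense) one needs something like $\rho-(a+1)>1/(a+2)$ from the minimal polynomial together with the hypothesis $v\le\frac{a-1}{2}$ to beat $2v+1$; you outline this but do not complete it. So the proposal does not yet constitute a proof.

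The paper avoids all of this analysis with a one-line observation you should look for: $2\alpha(v,w)-1=-(2v+1)-2w\rho+(2v+2)\rho^2=\alpha(2v+1,\,2w)$, and the hypotheses $0\le v\le\frac{a-1}{2}$ and $\frac{(2v+1)(a+2)+1}{2}\le w\le(v+1)(a+1)$ translate exactly into $1\le 2v+1\le a$ and $(2v+1)(a+2)+1\le 2w\le(2v+2)(a+1)$, i.e.\ the defining conditions for membership in $\bt$. Hence $2\alpha(v,w)-1\in\bt\subseteq\O_K^+$ and the conclusion is immediate, with no conjugate estimates needed. The structural reason your hard cases ($\sigma_1$ and $\sigma_3$) are exactly as tight as the interval bounds on $w$ is that those bounds are precisely the total-positivity conditions for elements of the form $-V-W\rho+(V+1)\rho^2$.
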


\begin{proof}
We see that
\[
2\alpha(v,w)-1=-(2v+1)-2w\rho+(2v+2)\rho^2.
\]
Thus, it remains to show that the element on the right side is totally positive and, in particular, that is true if it lies in $\bt$. We have
\[
1\leq 2v+1\leq a
\]
for $0\leq v\leq \frac{a-1}{2}$. Moreover, from $\frac{(2v+1)(a+2)+1}{2}\leq w\leq (v+1)(a+1)$, we obtain
\[
(2v+1)(a+2)+1\leq 2w\leq (2v+2)(a+1).
\]
Therefore, $-(2v+1)-2w\rho+(2v+2)\rho^2\in\bt$, from which the statement follows.      
\end{proof} 

Let us note that the remaining elements $\alpha(v,w)\in\bt$ do not satisfy $2\alpha(v,w)\succ 1$, and a detailed discussion can be found in \cite{Hla}.

Now, we will proceed with $3\alpha(v,w)$.

\begin{prop} \label{prop:3alpha}
Let $K=\Q(\rho)$ be a simplest cubic field with $a\geq 6$.
\begin{enumerate}
\item If $\overline{v}\in\Z$ is such that $\frac{a-3}{2}<\overline{v}\leq a-2$, $\frac{a+\overline{v}}{3}\in\Z$, then there exists $\overline{w}$, $\overline{v}(a+2)+1\leq \overline{w}\leq (a+1)(\overline{v}+1)$ such that $\frac{a^2+3a+3+\overline{w}}{3}\in\Z$, 
\[
3\alpha\left(\frac{a+\overline{v}}{3},\frac{a^2+3a+3+\overline{w}}{3}\right)\succ 1 \text{ and } 2\alpha\left(\frac{a+\overline{v}}{3},\frac{a^2+3a+3+\overline{w}}{3}\right)\not\succeq 1.
\]
\item Let $v_1, v_2\in\Z$ be such that $0\leq v_1\leq a$, $0\leq v_2\leq a-2$, $\frac{3a-1}{2}<v_1+v_2$ and $\frac{v_1+v_2-1}{3}\in\Z$. Then there exist $w_1,w_2\in\Z$, $v_1(a+2)+1\leq w_1\leq (a+1)(v_1+1)$, $v_2(a+2)+1\leq w_2\leq (a+1)(v_2+1)$ such that $\frac{w_1+w_2}{3}\in\Z$,
\[
3\alpha\left(\frac{v_1+v_2-1}{3},\frac{w_1+w_2}{3}\right)\succ 1 \text{ and } 2\alpha\left(\frac{v_1+v_2-1}{3},\frac{w_1+w_2}{3}\right)\not\succeq 1.
\]  
\end{enumerate}
Moreover for every $\overline{v}$ as in (1) and every $v_1$ and $v_2$ as in (2) such that $v_1+v_2-1\neq 2a-3$, we can find $\overline{w},w_1$ and $w_2$ as above such that
\[
3\alpha\left(\frac{a+\overline{v}}{3},\frac{a^2+3a+3+\overline{w}}{3}\right)\neq 3\alpha\left(\frac{v_1+v_2-1}{3},\frac{w_1+w_2}{3}\right) . 
\]
\end{prop}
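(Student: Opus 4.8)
The plan is to prove Proposition \ref{prop:3alpha} in three parts, following the structure of the statement itself. First I would establish part (1): given $\overline v$ with $\frac{a-3}{2}<\overline v\le a-2$ and $\frac{a+\overline v}{3}\in\Z$, set $v=\frac{a+\overline v}{3}$ and check that $v$ is a legal first coordinate, i.e. $0\le v\le a$ (in fact $v\le a-1$ will follow from $\overline v\le a-2$, which matters because we will later want $\alpha(v,w)\in\bt$ with room to spare). Then I would show that among the integers $w$ in the allowed range $\overline v(a+2)+1\le \overline w\le (a+1)(\overline v+1)$ there is at least one with $\frac{a^2+3a+3+\overline w}{3}\in\Z$; this is a counting/congruence argument — the range has length $(a+1)(\overline v+1)-\overline v(a+2)-1 = a-\overline v\ge 1$, and we need this to be $\ge 3$ (which holds since $\overline v\le a-3$ fails only at the endpoint $\overline v=a-2$, a case to check by hand) so that the range of $\overline w$ hits every residue class mod $3$. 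Setting $w=\frac{a^2+3a+3+\overline w}{3}$, one computes $3\alpha(v,w)-1 = -(3v+1) - 3w\rho + (3v+2)\rho^2$ and $3\alpha(v,w)-1$ should lie in $\bt$ after checking $1\le 3v+1-1\le a$ hm — more precisely, I would verify directly that $-(3v+1)-3w\rho+(3v+2)\rho^2$ is totally positive by showing its coordinates satisfy the $\bt$-defining inequalities: $0\le 3v\le a$ translated appropriately, and $3v(a+2)+1\le 3w\le (3v+1)(a+1)$, which is where the specific bounds on $\overline v$ and $\overline w$ are engineered to make things work. The condition $2\alpha(v,w)\not\succeq 1$ should come from the complementary range in Proposition \ref{prop:2alpha}: $v=\frac{a+\overline v}{3}$ with $\overline v>\frac{a-3}{2}$ forces $v>\frac{a-1}{2}$, hmm, actually I'd double-check the direction, but the point is that the hypothesis on $\overline v$ is chosen exactly so that $v$ falls outside the region where $2\alpha\succ 1$, and one argues directly that $2\alpha(v,w)-1$ has a negative conjugate.

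Part (2) is structurally identical: with $v=\frac{v_1+v_2-1}{3}$ I would first check $0\le v\le a$ (using $v_1\le a$, $v_2\le a-2$, so $v\le \frac{2a-3}{3}<a$), then produce $w_1,w_2$ in their respective ranges with $\frac{w_1+w_2}{3}\in\Z$ — here there is more freedom since both $w_1$ and $w_2$ vary, so the range of $w_1+w_2$ has length at least $(a-v_1)+(a-v_2)$ which is easily $\ge 3$ — and finally verify $3\alpha(v,w)-1\in\bt$ and $2\alpha(v,w)\not\succeq 1$ by the same computations, with $v>\frac{3a-1}{2}\cdot\frac13$ hm, $v_1+v_2>\frac{3a-1}{2}$ giving $v>\frac{3a-3}{6}=\frac{a-1}{2}$, again placing $v$ outside the $2\alpha\succ 1$ region.

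For the final ``Moreover'' clause, I would argue that the two elements $3\alpha\big(\frac{a+\overline v}{3},\frac{a^2+3a+3+\overline w}{3}\big)$ and $3\alpha\big(\frac{v_1+v_2-1}{3},\frac{w_1+w_2}{3}\big)$ are equal only if their first coordinates agree, i.e. $\frac{a+\overline v}{3}=\frac{v_1+v_2-1}{3}$, equivalently $a+\overline v = v_1+v_2-1$; since $\overline v\le a-2$ this forces $v_1+v_2-1\le 2a-3$, and the hypothesis $v_1+v_2-1\ne 2a-3$ combined with showing $v_1+v_2-1\ge 2a-3$ (from $\frac{3a-1}{2}<v_1+v_2$, so $v_1+v_2\ge \lceil\frac{3a+1}{2}\rceil$ — here one must be slightly careful with parity of $a$, but the integrality constraint $\frac{v_1+v_2-1}{3}\in\Z$ narrows it further) would yield a contradiction, unless instead we exploit the freedom in choosing $\overline w$ versus $w_1,w_2$: even if the first coordinates coincide, we may pick the second coordinates to differ, since each can range over at least $2$ admissible values in its congruence class. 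So the cleanest route is: if the first coordinates differ we are done immediately; if they coincide, use that the admissible $\overline w$ (resp.\ $w_1+w_2$) with the right residue mod $3$ is not unique — the range length $a-\overline v\ge 3$ in part (1) and the even larger range in part (2) guarantee at least two choices — and select them so the second coordinates of the two $\alpha$'s disagree.

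The main obstacle I anticipate is the boundary-case bookkeeping: the divisibility-by-$3$ existence claims for $\overline w$, $w_1$, $w_2$ fail to be automatic when the relevant interval of integers has length $<3$, which happens precisely at the extreme values $\overline v=a-2$ (interval length $a-\overline v=2$) and at the minimal admissible $v_1+v_2$. These edge cases will need to be checked separately — either by a direct computation showing the required residue is actually attained in the short interval, or by noting the hypotheses ($a\ge 6$, the integrality conditions) exclude the truly problematic configurations. Getting all the $\bt$-membership inequalities to line up with the right floor/ceiling after dividing by $3$, and tracking which congruence classes mod $3$ the coordinates land in, is the genuinely fiddly part; the totally-positive verification itself is routine once membership in $\bt$ is established, since $\bt\subseteq\O_K^+$.
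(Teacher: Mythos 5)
Your plan for the central claim $3\alpha\succ 1$ does not work as stated. You propose to show that $3\alpha(v,w)-1$ lies in $\bt$ by checking the defining inequalities, but this element is never in $\bt$: first, $3\alpha(v,w)-1=-(3v+1)-3w\rho+(3v+3)\rho^2$ (your $(3v+2)\rho^2$ is an arithmetic slip), so the $\rho^2$-coefficient exceeds the constant coefficient by $2$ rather than by $1$, i.e.\ it is not of the shape $-V-W\rho+(V+1)\rho^2$; second, with $v=\frac{a+\overline v}{3}$ and $\overline v>\frac{a-3}{2}$ one gets $3v=a+\overline v>\frac{3a-3}{2}>a$, so even the range condition $0\le V\le a$ fails badly. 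The idea you are missing — and the reason the hypotheses are phrased in terms of $\overline v$ (resp.\ $v_1,v_2$) rather than the arguments of $\alpha$ — is that $3\alpha-1$ is a \emph{decomposable} totally positive element: in part (1) one has the identity $3\alpha\bigl(\tfrac{a+\overline v}{3},\tfrac{a^2+3a+3+\overline w}{3}\bigr)-1=\alpha(\overline v,\overline w)+(\rho')^{-2}$, a sum of an element of $\bt$ and a totally positive unit, and in part (2) one has $3\alpha\bigl(\tfrac{v_1+v_2-1}{3},\tfrac{w_1+w_2}{3}\bigr)-1=\alpha(v_1,w_1)+\alpha(v_2,w_2)$. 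Total positivity is then immediate, and this decomposability is later essential (it is what separates these determinants from the indecomposable determinants of the $2x^2+2xy+\alpha y^2$ forms in the counting theorem), so it cannot be bypassed.

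The claim $2\alpha\not\succeq 1$ is also left without a real argument. Falling outside the sufficient condition of Proposition \ref{prop:2alpha} proves nothing by itself, and ``argue directly that $2\alpha-1$ has a negative conjugate'' is a placeholder: you would need to exhibit which conjugate is negative and why, uniformly in $a,\overline v,\overline w$. The paper instead pairs $2\alpha-1$ against a totally positive element $\tilde\delta$ of the codifferent and shows that $\mathrm{Tr}_{K/\Q}\bigl((2\alpha-1)\rho'^2\tilde\delta\bigr)<0$, which is a clean single-inequality obstruction; some substitute for this computation is required. Your counting arguments for the existence of $\overline w$, $w_1$, $w_2$ in the right residue class mod $3$, and your strategy for the ``Moreover'' clause (match first coordinates, then exploit the at-least-two admissible choices of second coordinate), do agree with the paper — note only that the interval for $\overline w$ contains $a-\overline v+1\ge 3$ integers even at $\overline v=a-2$, so your flagged endpoint case is not actually problematic, whereas the genuinely short intervals occur in the ``Moreover'' step at $v_1+v_2-1\in\{2a-5,2a-4\}$ and must be settled by the explicit case analysis you only gesture at.
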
  

\begin{proof}
Let us start with the first part. We see in the interval $[\overline{v}(a+2),(a+1)(\overline{v}+1)]$ for $\overline{w}$, there are at least 
$a-\overline{v}+1$ integers, which can be bounded from below by $3$ for $\overline{v}\leq a-2$. Thus, for every $\overline{v}\leq a-2$, we can choose $\overline{w}$ such that $\frac{a^2+3a+3+\overline{w}}{3}\in\Z$. Then, it is not difficult to see that
\begin{multline*}
3\alpha\left(\frac{a+\overline{v}}{3},\frac{a^2+3a+3+\overline{w}}{3}\right)-1=\alpha(\overline{v},\overline{w})+(-a-1-(a^2+3a+3)\rho+(a+2)\rho^2)\\=\alpha(\overline{v},\overline{w})+(\rho')^{-2}\succ 0,
\end{multline*}
where the expression for $(\rho')^{-2}$ was showed, e.g., in \cite{KT}. That proves 
\[
3\alpha\left(\frac{a+\overline{v}}{3},\frac{a^2+3a+3+\overline{w}}{3}\right)\succ 1.
\]

To show the second part of the statement for these elements, we will consider the codifferent of $K$. By this, we mean the set
\[
\O_K^{\vee}=\{\delta\in K;\text{Tr}_{K/\Q}(\alpha\delta)\in\Z\text{ for all }\alpha\in\O_K\}.
\]
By \cite[Section 5.1]{KT}, we know that the element
\[
    \tilde{\delta}=\frac{1}{a^2+3a+9}\left(-a-4-(2a+1)\rho+2\rho^2\right)\left(-a-2-a\rho+\rho^2\right)
\]
lies in $\O_K^{\vee}$ for $\O_K=\Z[\rho]$, and, moreover, it is totally positive. Therefore, if $\alpha\in\O_K^{+}\cup\{0\}$, then $\text{Tr}_{K/\Q}(\alpha\tilde{\delta})\geq 0$. It is rather a computational task to derive that
\begin{equation} \label{eq:codi1}
\text{Tr}_{K/\Q}\left(\left(2\alpha\left(\frac{a+\overline{v}}{3},\frac{a^2+3a+3+\overline{w}}{3}\right)-1\right)\rho'^2\tilde{\delta}\right)=1-\frac{2a}{3}-\frac{a^2}{3}-\frac{\overline{v}}{3}(2a^2+4a+2)+\frac{2a\overline{w}}{3}.
\end{equation} 
Now, it is important to realize that $\frac{a+\overline{v}}{3}\in\Z$ and $\frac{a^2+3a+3+\overline{w}}{3}\in\Z$ cannot be both satisfied for $\overline{w}=(a+1)(\overline{v}+1)$. Thus, we can assume $\overline{w}\leq (a+1)(\overline{v}+1)-1$, and we use this border value to find an upper bound on the right side of (\ref{eq:codi1}), which is
\begin{equation} \label{eq:codi1a}
1-\frac{2a}{3}+\frac{a^2}{3}-\frac{\overline{v}}{3}(2a+2).
\end{equation}
Here, we apply a lower bound $\frac{a-3}{2}+\frac{1}{2}\leq\overline{v}$ to estimate (\ref{eq:codi1a}) from above by $\frac{5}{3}-\frac{a}{3}$. However, that is negative for $a\geq 6$, which implies
\[
2\alpha\left(\frac{a+\overline{v}}{3},\frac{a^2+3a+3+\overline{w}}{3}\right)\not\succeq 1.
\] 

Now, let us proceed with the second part of the statement. Suppose that $v_1$ and $v_2$ satisfy the conditions, and let us take $w_1$ such that $v_1(a+2)+1\leq w_1\leq (a+1)(v_1+1)$. Then, since $0\leq v_2\leq a-2$, the interval for $w_2$ contains at least $3$ numbers. Therefore, we can pick one such that $\frac{w_1+w_2}{3}\in\Z$. Then
\[
3\alpha\left(\frac{v_1+v_2-1}{3},\frac{w_1+w_2}{3}\right)-1=\alpha(v_1,w_1)+\alpha(v_2,w_2).
\]
Since $\alpha(v_1,w_1),\alpha(v_2,w_2)\in\bt$, we have $\alpha(v_1,w_1)+\alpha(v_2,w_2)\succ 0$, from which the statement follows.

Now, it suffices to show that
\begin{equation}
2\alpha\left(\frac{v_1+v_2-1}{3},\frac{w_1+w_2}{3}\right)\not\succeq 1.
\end{equation}
However, using $w_1+w_2\leq (a+1)(v_1+v_2+2)$ and $\frac{3a-1}{2}+\frac{1}{2}\leq v_1+v_2$, we can follow the same procedure as before to show that
\[
\text{Tr}_{K/\Q}\left(\left(2\alpha\left(\frac{v_1+v_2-1}{3},\frac{w_1+w_2}{3}\right)-1\right)\rho'^2\tilde{\delta}\right)<\frac{5}{3}-\frac{a}{3}<0
\]
for all $a\geq 6$. 

Finally, we will show that for almost all choices of $\overline{v},v_1$ and $v_2$, we can find $\overline{w}, w_1$ and $w_2$ so that the above two elements do not coincide. First, if $v_1+v_2-1<a$, then we cannot have $v_1+v_2-1=a+\overline{v}$. So, in these cases, we are done. Thus, suppose $v_1+v_2-1\geq a$. If $v_1+v_2-1\leq 2a-6$, then we have at least $6$ integers in the interval for $w_1+w_2$, which is $[(v_1+v_2)(a+2)+2,(v_1+v_2+2)(a+1)]$, originating from the individual conditions on $w_1$ and $w_2$. Therefore, we have at least $2$ possibilities in the corresponding congruence class modulo $3$, so we can choose one for which $w_1+w_2\neq a^2+3a+3+\overline{w}$.

It remains to discuss $ v_1+v_2-1\in\{2a-5,2a-4\}$ as $v_1+v_2-1=2a-3$ is excluded in the statement. If $v_1+v_2-1=2a-5=a+\overline{v}$, it implies $\overline{v}=a-5$. Then 
\[
2a^2 - 6 \leq w_1 + w_2 \leq 2a^2 - 2.
\]
That gives us at least two possibilities for $w_1+w_2$ in the same congruence class modulo $3$ unless $w_1+w_2=2a^2-4$. If $w_1+w_2=2a^2-4$, then $w_1+w_2=a^2+3a+3+\overline{w}$ if $\overline{w}=a^2-3a-7$. However, we see that possible choices of $\overline{w}$ lie in the interval
\[
a^2-3a-9\leq \overline{w}\leq a^2-3a-4.
\]
For $a^2-3a-7$, we see that $a^2-3a-4$ is in the same congruence class modulo $3$. Thus, for $w_1+w_2=2a^2-4$, we can put $\overline{w}=a^2-3a-4$ to ensure that our elements are not equal to each other.

If $v_1+v_2-1=2a-4=a+\overline{v}$, then $\overline{v}=a-4$. Similarly, as before, we have
\[
2a^2 + a - 4 \leq w_1 + w_2 \leq 2a^2 + a - 1.
\]
This time, only $2a^2 + a - 4$ and $2a^2 + a - 1$ lie in the same congruence class modulo $3$, and for them, we can pick the other one so that $w_1+w_2\neq a^2+3a+3+\overline{w}$. For $\overline{v}=a-4$, the coefficient $\overline{w}$ belongs to the interval
\[
a^2 - 2a - 7 \leq \overline{w} \leq a^2 - 2a - 3.
\]
If $w_1+w_2=2a^2+a-3$, we put $\overline{w}=a^2-2a-3$. The case $w_1+w_2=2a^2+a-2$ is not possible since it is never divisible by $3$. That finishes the proof.        
\end{proof}

Now, we will provide a lower bound on the number of additively indecomposable binary quadratic forms in the simplest cubic fields. For them, we use elements appearing in the previous propositions.

\begin{theorem}[= Theorem \ref{thm:simplest_lower_bound}]
Let $\Q(\rho)$ be a simplest cubic field with $\O_K=\Z[\rho]$ such that $a\geq 6$, and let $a=6A+a_0$ where $a_0\in\{0,1,2,3,4,5\}$. Up to equivalence, the number of additively indecomposable binary quadratic forms in $\Q(\rho)$ is at least:
\begin{enumerate}
\item $\frac{27A^2+21A-6}{2}$ if $a_0=0$,
\item $\frac{27A^2+39A}{2}$ if $a_0=1$,
\item $\frac{27A^2+39A}{2}$ if $a_0=2$,
\item $\frac{27A^2+57A+12}{2}$ if $a_0=3$,
\item $\frac{27A^2+57A+18}{2}$ if $a_0=4$,
\item $\frac{27A^2+75A+36}{2}$ if $a_0=5$.
\end{enumerate}
\end{theorem}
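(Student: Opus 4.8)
The plan is to count the quadratic forms produced by Propositions~\ref{prop:2alpha} and~\ref{prop:3alpha}, via the indecomposable integers $\alpha(v,w)\in\bt$ that appear there, and then use the orbit structure under $T_1,T_2$ together with the equivalence relation on forms to avoid overcounting. Recall that a form $2x^2+2xy+\alpha y^2$ has determinant $2\alpha-\tfrac14$ and $3x^2+2xy+\alpha y^2$ has determinant $3\alpha-\tfrac14$; two equivalent forms have associated determinants. Since $2\alpha-\tfrac14$ and $3\alpha'-\tfrac14$ cannot be associated in general (different ``shapes''), and since $\alpha\mapsto 2\alpha-\tfrac14$ is injective, the main source of equivalence among the forms of type (1) in Proposition~\ref{prop:form_of_form23} is the action of the unit group, i.e.\ the orbits under $T_1,T_2$; likewise for type (2). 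So the first step is to count, for each residue $a_0$, the number of pairs $(v,w)$ satisfying the hypotheses of Proposition~\ref{prop:2alpha}, i.e.\ $0\le v\le\frac{a-1}{2}$ and $\frac{(2v+1)(a+2)+1}{2}\le w\le(v+1)(a+1)$; this is an elementary lattice-point count giving a quadratic polynomial in $A$.

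Next I would count the forms of the second type. For part (1) of Proposition~\ref{prop:3alpha} the index is a single integer $\overline v$ with $\frac{a-3}{2}<\overline v\le a-2$ and $3\mid a+\overline v$; this contributes a linear-in-$A$ number of elements $\alpha\bigl(\frac{a+\overline v}{3},\frac{a^2+3a+3+\overline w}{3}\bigr)$. For part (2) the index is a pair $(v_1,v_2)$ with $0\le v_1\le a$, $0\le v_2\le a-2$, $\frac{3a-1}{2}<v_1+v_2$, $3\mid v_1+v_2-1$; here it is cleaner to count by the value $m=v_1+v_2-1$, noting that $\alpha\bigl(\frac{m}{3},\frac{w_1+w_2}{3}\bigr)$ depends on $v_1,v_2$ only through $m$ and the sum $w_1+w_2$, so we should count distinct resulting $\alpha$'s — the last paragraph of Proposition~\ref{prop:3alpha} is exactly the tool that guarantees, for each admissible $m\ne 2a-3$, that we get at least one new element distinct from all the $\alpha$'s of part~(1). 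Summing these counts over the allowed ranges of $m$ (another quadratic-in-$A$ sum, with the congruence condition modulo $3$ producing the residue-dependent constants), and adding the count from part~(1), gives the total number of distinct indecomposable $\alpha$ with $3\alpha\succ1$, $2\alpha\not\succeq1$.

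The remaining issue is to pass from ``number of distinct $\alpha$'' to ``number of inequivalent forms''. Two forms $2x^2+2xy+\alpha y^2$ and $2x^2+2xy+\alpha_1 y^2$ are equivalent precisely when $\alpha_1$ is obtained from $\alpha$ by one of the unit-twists coming from $T_1,T_2$ (recall the remark after Proposition~\ref{prop:form_of_form23}: $u(\rho')^{-2}x^2+2(\rho')^{-2}xy+(\rho')^{-2}\alpha' y^2$ is the $T_1$-image, and similarly for $T_2$). Since $T_1,T_2$ generate orbits on $\bt$ of size essentially $3$ (they correspond to the two generating totally positive units and their inverses), the set of $\alpha$'s we have produced must be grouped into these orbits, and within each orbit only one representative gives a genuinely new equivalence class. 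Hence the lower bound on the number of inequivalent forms is the total count divided by $3$ (this is where the $\tfrac{1}{2}$ in the stated bounds, after simplification by the shape of $27A^2+\cdots$, comes out — in fact one divides the raw quadratic count, which is a multiple of roughly $\tfrac{27}{2}A^2\cdot 3$, by $3$). One must check that the families from Proposition~\ref{prop:2alpha} and from Proposition~\ref{prop:3alpha} do not produce equivalent forms across the two families — but this is immediate since the determinants $2\alpha-\tfrac14$ and $3\alpha_1-\tfrac14$ have different images under any embedding relative to the trace, so no unit can relate them.

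I expect the main obstacle to be the bookkeeping of the orbit structure under $T_1,T_2$ together with the divisibility-by-$3$ conditions: one has to verify that the elements $\alpha(v,w)$ counted in Proposition~\ref{prop:2alpha} do not accidentally lie in the same $\langle T_1,T_2\rangle$-orbit in a way that depends on $a_0$, and that the separate orbits are full of size $3$ (no fixed points), so that dividing by $3$ is legitimate and loses at most a bounded amount — which is why the stated bounds are only lower bounds and have the slightly irregular constant terms $-6,0,0,12,18,36$ depending on $a_0$. Carefully combining the three lattice-point counts (one from Proposition~\ref{prop:2alpha}, two from Proposition~\ref{prop:3alpha}), subtracting the negligible overlaps addressed by the final clause of Proposition~\ref{prop:3alpha}, and dividing by the orbit size $3$, should yield exactly the six expressions in the statement after writing $a=6A+a_0$ and simplifying.
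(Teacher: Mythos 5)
Your overall plan --- count the $\alpha(v,w)$ admitted by Propositions~\ref{prop:2alpha} and~\ref{prop:3alpha}, separate the two families by the shape of their determinants, and then account for the action of $T_1,T_2$ --- starts in the right place, but it contains a structural error that reverses the role of $T_1$ and $T_2$ and would not produce the stated bounds.

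You treat the $T_1$- and $T_2$-images as \emph{equivalent} to the original form and propose to divide the raw count by the orbit size $3$. This is backwards. The $T_1$-image of $Q(x,y)=2x^2+2xy+\alpha y^2$ is $2(\rho')^{-2}x^2+2(\rho')^{-2}xy+(\rho')^{-2}\alpha'y^2$; it arises from the Galois conjugate twisted by a totally positive unit, and Galois conjugation is \emph{not} an equivalence of forms (equivalence is $M_Q=NM_HN^t$ with $N$ over $\O_K$ of unit determinant). The paper proves the opposite of your claim: after normalizing the determinants into the fundamental domain, the three elements $2\alpha(v,w)-1$, $2\alpha(v_1,w_1)-(\rho')^{-2}$ and $2\alpha(v_2,w_2)-\rho^2$ are pairwise distinct (by parity of the coefficients in the basis $\{1,\rho,\rho^2\}$, and a mod-$3$ table for the $3x^2$-family), so the twisted forms are pairwise \emph{non-equivalent} and the count is \emph{multiplied} by $3$. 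Your division by $3$ would yield roughly one ninth of the intended bound, and one can check directly that the paper's formulas, e.g.\ $\tfrac{27A^2+21A-6}{2}=3\cdot\tfrac{(v,w)\text{-count}}{1}+3A+3(A-1)$ for $a_0=0$, only come out with the multiplication.

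A second gap: to show that distinct admissible pairs $(v,w)$ give non-equivalent forms you invoke only injectivity of $\alpha\mapsto 2\alpha-\tfrac14$ (the determinant is in fact $2\alpha-1$, since the forms are classical with off-diagonal Gram entry $1$). Injectivity is not enough: equivalent forms have determinants that are \emph{associated}, i.e.\ equal up to a totally positive unit, so you must rule out $2\alpha(v,w)-1=\varepsilon\bigl(2\alpha(v',w')-1\bigr)$ for a nontrivial unit $\varepsilon$. The paper does this by observing that $2\alpha(v,w)-1=-(2v+1)-2w\rho+(2v+2)\rho^2$ again lies in $\bt$, hence strictly inside a fundamental domain for multiplication by totally positive units, so associated elements there must be equal. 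Your cross-family separation (the $2x^2$ forms versus the $3x^2$ forms) is likewise too vague; the clean argument is that $2\alpha(v,w)-1\in\bt$ is an indecomposable integer while $3\alpha-1$ is exhibited as a sum of two totally positive integers, hence decomposable, and (in)decomposability is preserved under association. Both points are fixable, but the division-by-$3$ issue is fatal to the computation as proposed.
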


\begin{proof}
In this proof, we will count additively indecomposable forms $2x^2+2xy+\alpha(v,w)y^2$ and $3x^2+2xy+\alpha(v,w)y^2$, and we will also include those which are obtained from these after transformations $T_1$ and $T_2$. For that, we use the following tools. First, we know that if two forms $Q_1$ and $Q_2$ are equivalent, then their determinants are associated, i.e., $\det(Q_1)=\det(Q_2)\varepsilon$ for some totally positive unit $\varepsilon$. Thus, for these forms, we show the opposite. For that, we use the fact that the determinants of the above forms lie strictly inside a fundamental domain for the action of multiplication by totally positive units. It means that if two elements from it are associated, they are necessarily equal to each other. More specifically, we consider the fundamental domain
\[
\R_0^++\R_0^+\rho^2+\R_0^2(\rho')^{-2}=\R_0^++\R_0^+\rho^2+\R_0^+(-(a+1)-(a^2+3a+3)\rho+(a+2)\rho^2),
\]
see \cite[Subsection 4.1]{KT}. 
It is important to mention that elements from $\bt$ lie inside this domain.

First, let us consider forms $Q(x,y)=2x^2+2xy+\alpha(v,w)y^2$ where $0\leq v\leq \frac{a-1}{2}$ and $\frac{(2v+1)(a+2)+1}{2}\leq w\leq (v+1)(a+1)$, which are additively indecomposable by Propositions~\ref{prop:2alpha} and \ref{prop:form_of_form23}. We know that their determinant is
\begin{equation} \label{eq:det1}
\det(Q)=2\alpha(v,w)-1=-(2v+1)-2w\rho+(2v+2)\rho^2\in\bt.
\end{equation}
Thus, for different choices of $v$ and $w$, we get different elements of $\bt$, so these forms are not mutually equivalent. Moreover, let us also consider the forms $2(\rho')^{-2}x^2+2(\rho')^{-2}xy+(\rho')^{-2}\alpha'(v,w)y^2$ and $2\rho^2x^2+2\rho^2xy+\rho^2\alpha''(v,w)y^2$, which are also additively indecomposable. Easily, their determinants are 
\[
(\rho')^{-2}(\det(Q))'=2(\rho')^{-2}\alpha'(v,w)-(\rho')^{-2}
\] 
and
\[
\rho^2(\det(Q))''=2\rho^2\alpha''(v,w)-\rho^2.
\]
From the properties of transformations $T_1$ and $T_2$, we know that $(\rho')^{-2}\alpha'(v,w),\rho^2\alpha''(v,w)\in\bt$, and let us put $\alpha(v_1,w_1)=(\rho')^{-2}\alpha'(v,w)$ and $\alpha(v_2,w_2)=\rho^2\alpha''(v,w)$. It suffices to check that $2\alpha(v,w)-1$, $2\alpha(v_1,w_1)-(\rho')^{-2}$ and $2\alpha(v_2,w_2)-\rho^2$ cannot be equal to each other for different choices of $v,w,v_1,w_1,v_2$ and $w_2$. However, we have
\begin{equation} \label{eq:det2}
2\alpha(v_1,w_1)-(\rho')^{-2}=-(2v_1-a-1)-(2w_1-a^2-3a-3)\rho+(2v_1-a)\rho^2
\end{equation}
and
\begin{equation} \label{eq:det3}
2\alpha(v_2,w_2)-\rho^2=-2v_2-2w_2\rho+(2v_2+1)\rho^2.
\end{equation} 
We see that the coefficient before $\rho$ in (\ref{eq:det2}) is odd, and the same coefficient for (\ref{eq:det1}) and (\ref{eq:det3}) is always even, so an element in (\ref{eq:det2}) cannot be equal to elements in (\ref{eq:det1}) and (\ref{eq:det3}). Similarly, the coefficient before $\rho^2$ in (\ref{eq:det1}) is even while its value in (\ref{eq:det3}) is odd, which excludes equality between elements in (\ref{eq:det1}) and (\ref{eq:det3}). Therefore, forms from these three sets are mutually non-equivalent. 

Now, we will count their number. Recall that $0\leq v\leq \frac{a-1}{2}$ and $\frac{(2v+1)(a+2)+1}{2}\leq w\leq (v+1)(a+1)$. In overall (including forms after $T_1$ and $T_2$), we obtain $\frac{3}{8}a(a+2)$ forms for $a$ even, and $\frac{3}{8}(a+1)(a+3)$ forms for $a$ odd.

We will proceed with forms of the type $3x^2+2xy+\alpha(v,w)y^2$ where $\alpha(v,w)$ is as in Proposition \ref{prop:3alpha}. First, note that the determinant of these forms is a decomposable integer, so they cannot be equivalent to the forms from the previous part, whose determinant is an indecomposable integer. As before, we will also consider the forms appearing after the application of transformations $T_1$ and $T_2$. Note that all of $3x^2+2xy+\alpha(v,w)y^2$, $3(\rho')^{-2}x^2+2(\rho')^{-2}xy+(\rho')^{-2}\alpha'(v,w)y^2$ and $3\rho^2x^2+2\rho^2xy+\rho^2\alpha''(v,w)y^2$ have a determinant of the form $\alpha(\overline{v},\overline{w})+\varepsilon$ or $\alpha(v_1,w_1)+\alpha(v_2,w_2)$ where $\alpha(\overline{v},\overline{w}),\alpha(v_1,w_1),\alpha(v_2,w_2)\in\bt$ and $\varepsilon\in\{1,\rho^2,(\rho')^{-2}\}$, and since all of these elements lie inside $\R_0^++\R_0^+\rho^2+\R_0^2(\rho')^{-2}$, the determinants of our forms also belongs to this fundamental domain for the action of multiplication by totally positive units. Therefore, it is enough to show that these elements are not equal to each other. Recall that by Proposition \ref{prop:3alpha}, we can choose $\overline{w},w_1$ and $w_2$ so that $\alpha(v_1,w_1)+\alpha(v_2,w_2)\neq \alpha(\overline{v},\overline{w})+(\rho')^{-2}$ if $v_1+v_2-1=a+\overline{v}$. Note that this property is transmitted if we use $T_1$ and $T_2$.

Therefore, we need to study determinants
\begin{equation} \label{eq:3_1}
3\alpha(v,w)-1=-(3v+1)-3w\rho+(3v+3)\rho^2,
\end{equation}
\begin{equation} \label{eq:3_2}
3\alpha(v,w)-(\rho')^{-2}=-(3v-a-1)-(3w-a^2-3a-3)\rho+(3v-a+1)\rho^2
\end{equation}
and
\begin{equation} \label{eq:3_3}
3\alpha(v,w)-\rho^2=-3v-3w\rho^2+(3v+2)\rho^2.
\end{equation} 
In Table \ref{tab:coef_mod_3}, for elements in (\ref{eq:3_1}), (\ref{eq:3_2}) and (\ref{eq:3_3}), we show values of coefficients in basis $\{1,\rho,\rho^2\}$ modulo $3$ for different choices of $a$. We see that in all these cases, the values modulo $3$ are different for at least one coefficient, so elements in (\ref{eq:3_1}), (\ref{eq:3_2}), and (\ref{eq:3_3}) cannot be equal to each other. That implies that our forms are indeed non-equivalent.

\begin{table}
\centering
\begin{tabular}{c|c|c|c|c|c|c|c|c|c|c|}
\cline{2-10}
& \multicolumn{3}{|c|}{$3\alpha(v,w)-1$}& \multicolumn{3}{|c|}{$3\alpha(v,w)-(\rho')^{-2}$} & \multicolumn{3}{|c|}{$3\alpha(v,w)-\rho^2$}\\
\hline
\multicolumn{1}{|c|}{$a\;(\text{mod }3)$} & \hspace*{0.75mm} $1$ \hspace*{0.75mm} & \hspace*{0.75mm} $\rho$ \hspace*{0.75mm} & \hspace*{0.75mm} $\rho^2$ \hspace*{0.75mm} & \hspace*{0.75mm} $1$ \hspace*{0.75mm} & \hspace*{0.75mm} $\rho$ \hspace*{0.75mm} & \hspace*{0.75mm} $\rho^2$ \hspace*{0.75mm} & \hspace*{0.75mm} $1$ \hspace*{0.75mm} & \hspace*{0.75mm} $\rho$ \hspace*{0.75mm} & \hspace*{0.75mm} $\rho^2$ \hspace*{0.75mm}\\
\hline
\multicolumn{1}{|c|}{$0$} & $2$ & $0$ & $0$ & $1$ & $0$ & $1$ & $0$ & $0$ & $2$\\
\hline 
\multicolumn{1}{|c|}{$1$} & $2$ & $0$ & $0$ & $2$ & $1$ & $0$ & $0$ & $0$ & $2$\\
\hline
\multicolumn{1}{|c|}{$2$} & $2$ & $0$ & $0$ & $0$ & $1$ & $2$ & $0$ & $0$ & $2$\\
\hline
\end{tabular}
\caption{Values modulo $3$ of coefficients in base $\{1,\rho,\rho^2\}$ of elements $3\alpha(v,w)-1$, $3\alpha(v,w)-(\rho')^{-2}$ and $3\alpha(v,w)-\rho^2$} \label{tab:coef_mod_3}
\end{table}

It remains to count them. First, for elements from Proposition \ref{prop:3alpha}(1), we need count all $\overline{v}$ such that $\frac{a-3}{2}<\overline{v}\leq a-2$ and, moreover, we need $\frac{a+\overline{v}}{3}\in\Z$, i.e., we can include only one third of the above interval. Let us write $a$ as $a=6A+a_0$, where $A\in\N_0$ and $a_0\in\{0,1,2,3,4,5\}$. Then, it is easy to show that the number of additively indecomposable quadratic forms originating from these elements (three times to add forms after $T_1$ and $T_2$) is at least $3A$.  

Now, we turn our attention to elements in Proposition \ref{prop:3alpha}(2). Here, we will count different values of $v_1+v_2$ where $\frac{3a-1}{2}<v_1+v_2\leq 2a-3$. Note that every integer between these two bounds is attained for at least one pair $(v_1,v_2)$. The case $v_1+v_2=2a-2$ is excluded, and moreover, we must again consider only one third of these numbers. Therefore, we get the following lower bounds:
\begin{enumerate}
\item $3(A-1)$ for $a_0=0,1,2,3$,
\item $3A$ for $a_0=4,5$. 
\end{enumerate}
Summing everything up, we get the values in the statement of the theorem.     
\end{proof}  

\section{Declaration}
\textbf{Acknowledgment.} We thank the number theory group at Charles University for useful discussions on this topic.  

\textbf{Conflict of Interest/Competing Interest.} None. 

\textbf{Funding statement.} The authors are supported by Czech Science Foundation GAČR, grant 22-11563O. Simona Fryšová was further supported by Charles University programmes PRIMUS/24SCI/010, GA UK 252931 and SVV-2025-260837.

\medskip


\begin{thebibliography}{10}  

\bibitem{BI} R. Baeza and M. I. Icaza, \textit{Decomposition of positive definite integral quadratic forms as sums of positive definite quadratic forms}, Proc. Sympos. Pure Math. 58, 63--72 (1995).

\bibitem{CLSTZ} M. \v{C}ech, D. Lachman, J. Svoboda, M. Tinkov\'a and K. Zemkov\'a, \textit{Universal quadratic forms and indecomposables over biquadratic fields}, Math. Nachr. 292, 540--555 (2019).

\bibitem{EK1} P. Erd\"os and C. Ko, \textit{Some Results on Definite Quadratic Forms}, J. London Math. Soc. 13(3), 217--224 (1938). 

\bibitem{EK2}  P. Erd\"os and C. Ko, \textit{On definite quadratic forms, which are not the sum of two definite or semi-definite forms}, Acta Arith. 3, 102--122 (1939). 


\bibitem{Hla} S. Hlavinkov\'a, \textit{Non-decomposable binary quadratic forms over number fields}, Master’s thesis, Charles University, 2025.

\bibitem{Ja} F. Jarvis, \textit{Algebraic Number Theory}, Springer, 2007.

\bibitem{KT} V. Kala and M. Tinkov\'a, \textit{Universal quadratic forms, small norms and traces in families of number fields}, Int. Math. Res. Not. IMRN 2023, 7541-7577 (2023).

\bibitem{KY1} V. Kala and P. Yatsyna, \textit{Lifting problem for universal quadratic forms}, Adv. Math. 377, 107497 (2021).

\bibitem{KTZ} J. Krásenský, M. Tinková and K. Zemková, \textit{There are no universal ternary quadratic forms over biquadratic fields}, Proc. Edinb. Math. Soc. 63 (3), 861-912 (2020).

\bibitem{LP} F. Lemmermeyer and A. Peth\"{o}, \textit{Simplest Cubic Fields},  Manuscripta Math. 88, 53--58 (1995).

\bibitem{Man} S. H. Man, \textit{Minimal rank of universal lattices and number of indecomposable elements in real multiquadratic fields}, Adv. Math. 447, 109694 (2024). 

\bibitem{Mo2} L. J. Mordell, \textit{The representation of a definite quadratic form as a sum of two others}, Ann. of Math. 38(4), 751--757 (1937).  

\bibitem{Sh} D. Shanks, \textit{The simplest cubic number fields}, Math. Comp. 28, 1137--1152 (1974).


\bibitem{TY} M. Tinkov\'a and P. Yatsyna, \textit{Additively indecomposable quadratic forms over totally real number fields},  J. Lond. Math. Soc. 112(5), e70350 (2025).

\bibitem{Wi} K. S. Williams, \textit{Integers of biquadratic fields}, Canad. Math. Bull. 13, 519--526 (1970).

\end{thebibliography}
\end{document}